\documentclass{amsart}
\usepackage{mathrsfs, amsmath, graphics}
\usepackage{amssymb, comment}

\usepackage{graphicx}

\usepackage{epsfig}

\usepackage{amsfonts}

\usepackage{amssymb,amscd}
\usepackage{tikz}
\usepackage{verbatim}
\usepackage{enumerate}
\usepackage{booktabs}
\usepackage[all]{xy}
\usepackage{subfigure}

\usepackage{caption}
\usepackage{subfigure}
\usepackage{marginnote}

\newtheorem{thm}{Theorem}[section]
\newtheorem{lemma}[thm]{Lemma}

\newtheorem{prop}[thm]{Proposition}

\theoremstyle{definition}
\newtheorem{defn}[thm]{Definition}

\newtheorem{question}[thm]{Question}

\theoremstyle{remark}
\newtheorem{remark}[thm]{Remark}
\usepackage{enumerate}
\numberwithin{equation}{section}

\theoremstyle{notation}

\usepackage{floatrow}
\floatsetup[table]{capposition=top}
\newfloatcommand{capbtabbox}{table}[][\FBwidth]

\usepackage{lineno}

\setcounter{tocdepth}{2}

\usepackage{graphicx}
\usepackage{mathrsfs}
\usepackage{epsfig}
\usepackage{amsmath}
\usepackage{amsfonts}
\usepackage{amssymb}
\usepackage{amssymb,amscd}
\usepackage{tikz}
\usepackage{verbatim}
\usepackage{enumerate}
\usepackage{booktabs}
\usepackage[all]{xy}

% ----------------------------------------------------------------
%\vfuzz2pt % Don't report over-full v-boxes if over-edge is small
%\hfuzz2pt % Don't report over-full h-boxes if over-edge is small
% THEOREMS -------------------------------------------------------

\theoremstyle{definition}

\theoremstyle{remark}

\numberwithin{equation}{section}

%\pagewiselinenumbers

%\numberwithin{equation}{section}

%\newcommand{\abs}[1]{\left\lvert#1\right\rvert}

%%    Blank box placeholder for figures (to avoid requiring any
%%    particular graphics capabilities for printing this document).
%\newcommand{\blankbox}[2]{%
%  \parbox{\columnwidth}{\centering
%%    Set fboxsep to 0 so that the actual size of the box will match the
%%    given measurements more closely.
%    \setlength{\fboxsep}{0pt}%
%    \fbox{\raisebox{0pt}[#2]{\hspace{#1}}}%
%  }%
%}

\title{Complexification of an infinite volume Coxeter tetrahedron}

\author{Jiming Ma}
\address{School of Mathematical Sciences, Fudan University, Shanghai, 200433, P. R. China}
\email{majiming@fudan.edu.cn}

\keywords{Complex hyperbolic geomerty,  Coxeter polytope, Dirichlet domain, complex  reflection.}

\subjclass[2010]{20F55, 20H10, 57M60, 22E40, 51M10.}

\date{Jan. 19, 2023}

\thanks{Jiming Ma was partially supported by  NSFC 12171092. \\
}

\begin{document}

%\pagewiselinenumbers

%\AddEverypageHook{    % on every page...
%    \begin{tikzpicture}[remember picture,overlay]
%        \node [rotate=60,scale=6,color=gray!40] at (current page.center)
%        {DRAFT VERSION};
%    \end{tikzpicture}
%}

\maketitle
%	Complexification of 3-dimensional hyperbolic tetrahedral Coxeter polytopes

% $\frac{\pi}{2}$
\begin{abstract}

	Let $T$ be an infinite volume Coxeter tetrahedron in  three dimensional real hyperbolic space ${\bf H}^{3}_{\mathbb R}$ with two opposite right-angles and  the other angles are all zeros.  Let $G$ be the Coxeter group of $T$, so 
 $$G=\left\langle \iota_1, \iota_2, \iota_3, \iota_4 \Bigg| \begin{array} {c}    \iota_1^2=  \iota_2^2 = \iota_3^2=\iota_4^2=id,  \\ [3 pt]
(\iota_1 \iota_3)^{2}=(\iota_2 \iota_4)^{2}=id
	\end{array}\right\rangle$$
	as an  abstract  group.
	We  study type-preserving representations $\rho: G \rightarrow \mathbf{PU}(3,1)$,  where $\rho( \iota_{i})=I_{i}$ is a complex reflection fixing a  complex hyperbolic plane   in  three dimensional complex hyperbolic space ${\bf H}^{3}_{\mathbb C}$   for $1 \leq i \leq 4$. The
	   moduli space $\mathcal{M}$ of these representations is parameterized by $\theta \in  [\frac{5 \pi}{6}, \pi]$. In particular, $\theta=\frac{5 \pi}{6}$  and $\theta=\pi$ degenerate to  ${\bf H}^{2}_{\mathbb C}$-geometry and  ${\bf H}^{3}_{\mathbb R}$-geometry   respectively.
	Via Dirichlet  domains, we   show   $\rho=\rho_{\theta}$ is a discrete and faithful representation of the group $G$ for all $\theta \in [\frac{5 \pi}{6}, \pi]$. This is the first nontrivial  moduli space in   three dimensional complex  hyperbolic space that has been studied completely.

%, such that   $I_{i}I_{i+1}$ is parabolic for $1 \leq i \leq 4$  mod $4$.

\end{abstract}

% \tableofcontents

\section{Introduction}\label{sec:intro}

\subsection{Motivation}\label{subsection:motivation}

Hyperbolic $n$-space ${\bf H}^{n}_{\mathbb R}$ is the unique complete simply
connected Riemannian $n$-manifold with all sectional curvatures $-1$.  Complex hyperbolic $n$-space ${\bf H}^{n}_{\mathbb C}$ is the unique complete simply
connected K\"ahler  $n$-manifold with all holomorphic  sectional curvatures $-1$. But the Riemannian  sectional curvatures of a complex hyperbolic space  are no longer constant, which are
pinched between $-1$ and $-\frac{1}{4}$. This makes complex hyperbolic  geometry much more difficult to study.
The   holomorphic isometry group of ${\bf H}^{n}_{\mathbb C}$ is $\mathbf{PU}(n,1)$, the   orientation preserving isometry group of ${\bf H}^{n}_{\mathbb R}$ is $\mathbf{PO}(n,1)$. Besides  ${\bf H}^{n}_{\mathbb R}$ is a totally geodesic submanifold of ${\bf H}^{n}_{\mathbb C}$,  $\mathbf{PO}(n,1)$ is a natural subgroup of  $\mathbf{PU}(n,1)$.
%Discrete groups of complex
%hyperbolic isometries have not been studied as widely as their real
%hyperbolic counterparts. Nevertheless, they are interesting to study and
%should be more widely known.

%deformation
%theory of hyperbolic 3-manifolds and Kleinian groups is a rich topic in last sixty years, that is,  deformations of groups into $\mathbf{PO}(3,1)$

Over the last sixty years the theory of Kleinian groups, that  is,   deformations of groups into $\mathbf{PO}(3,1)$,  has flourished because of its close connections with low dimensional topology and geometry. More precisely, pioneered by
Ahlfors and Bers in the 1960's, 
 Thurston formulated a conjectural classification scheme
for all hyperbolic 3-manifolds with finitely generated fundamental groups in the late 1970's.
The conjecture predicted that an infinite volume hyperbolic 3-manifold with finitely
generated fundamental group is uniquely determined by its topological type and its
end invariants. Thurston's conjecture is completed by a series of works of many mathematicians,   which is one of the most great breakthrough  in 3-manifolds theory.  See Minsky's ICM talk 	\cite{Minsky:2006} for related topics and the reference.

There are also some remarkable works on deformations of groups into $\mathbf{PU}(2,1)$.
Let $\Delta(p,q,r)$ be the abstract $(p,q,r)$ reflection triangle group with the presentation
$$\Delta(p,q,r)=\langle \sigma_1, \sigma_2, \sigma_3 | \sigma^2_1=\sigma^2_2=\sigma^2_3=(\sigma_2 \sigma_3)^p=(\sigma_3 \sigma_1)^q=(\sigma_1 \sigma_2)^r=id \rangle,$$
where $p,q,r$ are positive integers or $\infty$ satisfying $$\frac{1}{p}+\frac{1}{q}+\frac{1}{r}<1.$$ If $p,q$ or $r$ equals $\infty$, then
the corresponding relation does not appear.  The  ideal triangle group is the case that $p=q=r=\infty$.
A \emph{$(p,q,r)$ complex hyperbolic triangle group} is a representation $\rho$ of $\Delta(p,q,r)$ into $\mathbf{PU}(2,1)$
where the generators fix complex lines, we denote $\rho(\sigma_{i})$ by $I_{i}$.
 Goldman and Parker  initiated the study of the deformations of ideal triangle group into  $\mathbf{PU}(2,1)$ in \cite{GoPa}.
They  gave an interval  in the moduli space of complex hyperbolic ideal triangle groups, for points in this interval  the corresponding representations are discrete and faithful.
They conjectured that a complex hyperbolic ideal triangle group $\Gamma=\Delta_{\infty,\infty, \infty}=\langle I_1, I_2, I_3 \rangle$ is discrete and faithful if and only if $I_1 I_2 I_3$ is not elliptic. Schwartz proved  Goldman-Parker's conjecture in \cite{Schwartz:2001ann, schwartz:2006}. 
Richard Schwartz has also  conjectured the necessary and sufficient condition for a general complex hyperbolic  triangle group $\Delta_{p,q,r}=\langle I_1,I_2,I_3\rangle < \mathbf{PU}(2,1)$ to be a discrete and faithful  representation of $\Delta(p,q,r)$. See Schwartz's ICM talk \cite{Schwartz-icm} for related topics and the reference. Schwartz's conjecture has been proved in a few cases \cite{ParkerWX:2016, ParkerWill:2017}.

% and the image group by $\Delta_{p,q,r}=\langle I_1,I_2,I_3 \rangle$. It is well known  that the space of $(p,q,r)$-complex reflection triangle groups has real dimension one if $3 \leq p \leq q \leq r$. 

%Furthermore,  Schwartz analyzed the complex hyperbolic ideal triangle group $\Gamma$ when $I_1 I_2 I_3$ is parabolic, and showed  the 3-manifold at infinity of the quotient space ${\bf H}^2_{\mathbb C}/{\Gamma}$ is commensurable with the
%Whitehead link complement in the 3-sphere. In particular, the
%Whitehead link complement  admits uniformizable  spherical CR-structures.

 From above we know one  way to study discrete subgroups of  $\mathbf{PU}(n,1)$ is the deformations of a well-understood  representation. From a  finitely presented abstract  group $G$, and a discrete faithful representation $\rho_{0}: G \rightarrow \mathbf{PU}(n,1)$,  we may deform $\rho_{0}$  to $\rho_{1}: G \rightarrow \mathbf{PU}(n,1)$ along a path. We are interested in   whether  $\rho_{1}$ is discrete and faithful. Moreover, even when  $\rho_{1}$ is not faithful, but it also has the chance to be discrete. This case is very interesting, since if we are lucky, we have the  opportunity to get  a complex hyperbolic lattice at  $\rho_{1}$ \cite{dpp:2016, dpp:2021}.

One of the most important questions in complex hyperbolic geometry is
 the existence  of (infinitely many commensurable classes of)  non-arithmetic complex hyperbolic lattices  \cite{Margulis, Fisher:2021}. This is notorious difficult comparing to its real hyperbolic counterpart	\cite{Gromov-PS:1992}. As results of forty years' hard work, people only found 22
 commensurable classes of  non-arithmetic complex hyperbolic lattices in $\mathbf{PU}(2,1)$ \cite{DeligneMostow:1986,dpp:2016, dpp:2021}, and
  2
 commensurable classes of  non-arithmetic complex hyperbolic lattices in $\mathbf{PU}(3,1)$ \cite{DeligneMostow:1986, Deraux:2020}. Both $\mathbf{PO}(3,1)$  and $\mathbf{PU}(2,1)$ are subgroups of $\mathbf{PU}(3,1)$. It is reasonable that deformations of some discrete groups in $\mathbf{PO}(3,1)$  into the larger group $\mathbf{PU}(3,1)$  may give   some discrete, but not faithful representations, which in turn   have the opportunity to
 give some new  ${\bf H}^3_{\mathbb C}$-lattices  as pioneered  by \cite{DeligneMostow:1986, dpp:2016, dpp:2021}.  The author remarks that comparing to results in  ${\bf H}^{2}_{\mathbb C}$-geometry  	\cite{Schwartz:2001, Schwartz:2001, DerauxF:2015}, any  discrete deformations of a group $G$ into $\mathbf{PU}(3,1)$ with some accidental parabolic or elliptic element is also highly interesting.  For instance, we have no non-trivial example of a 5-manifold $N$ which  admits uniformizable CR-structures now. 
  Here by nontrivial example, we mean  $N$ is neither  diffeomorphic to the $\mathbb{S}^3$-bundle over a ${\bf H}^{2}_{\mathbb R}$-manifold $F^2$, the trivial $\mathbb{S}^2$-bundle over a ${\bf H}^{3}_{\mathbb R}$-manifold $Y^3$, nor  diffeomorphic to a $\mathbb{S}^1$-bundle over a ${\bf H}^{2}_{\mathbb C}$-manifold  $X^4$.

% In this paper, we initial study complexification of 3-dimensional hyperbolic tetrahedral Coxeter polytopes,

In this paper, we  study   deformations of groups into $\mathbf{PU}(3,1)$ via Dirichlet domains,  which is much more difficult and richer than deformations of  groups into $\mathbf{PO}(3,1)$  and $\mathbf{PU}(2,1)$.  By this we mean:
\begin{itemize}
		\item It is well known that the space of discrete and faithful  representations of a group into $\mathbf{PO}(3,1)$ has fractal  boundary in general. For example,  the so called Riley slice has a beautiful fractal  boundary in $\mathbb{C}$ (see Page VIII of  \cite{ASWY:2019});
	\item  People tend to guess that  the space of discrete and faithful  representations of a group into $\mathbf{PU}(2,1)$ has piece-wise smooth boundary (at least when the deformation space has two dimension). For one of the tractable  cases, the so called complex Riley slice,  which is 2-dimensional, see \cite{ParkerWill:2017};
	\item Moreover, there are very few results on the space of discrete and faithful  representations of a group into $\widehat{\mathbf{PU}(2,1)}$. Here  $\widehat{\mathbf{PU}(2,1)}$ is the full isometry group of  ${\bf H}^2_{\mathbb C}$. To the author's knowledge, the only complete  classification result   is in \cite{Falbelparker:2003}. Where Falbel-Parker completed the study on the space of discrete and faithful  representations of $\mathbb{Z}_{2}*\mathbb{Z}_{3}$  into $\widehat{\mathbf{PU}(2,1)}$ (with one additional parabolic element), the moduli space is 1-dimensional.

\end{itemize}
So deformations of groups into $\mathbf{PU}(3,1)$   may have fractal boundaries in general (at least   when the deformation spaces have large dimensions), but we are very far from understanding them.

%and $\mathbf{PU}(2,1)<\widehat{\mathbf{PU}(2,1)}$ has index two

% with the ultimate goal is to find some non-arithmetic complex hyperbolic lattices.

%Then as a  generalization of the proof of  Theorem \ref{thm:3-mfd}, we have

% \begin{thm}\label{thm:2dimcomplexhyp}  For any $h \geq \sqrt{2}$, when  $t=\operatorname{arccos}(-\frac{3h^2+1}{4h^2})$, $\rho_{(h,t)}$   is a discrete and faithful  representation of $G$ into $\mathbf{PU}(2,1)$,  the 3-manifold at infinity  of $\rho_{(h,t)}(K)$ is also the connected sum of the trefoil knot  complement in $\mathbb{S}^3$ and a real projective space  ${\mathbb R}{\mathbf P}^3$.
%\end{thm}

%HHHHHHHHHHHHHHHHH WHEN PARA the intersection of two isometric spheres, the %same angles as $\operatorname{arccos}(-\frac{7}{8})$, that is  $(s1,s2)=(0,\pi)$ ,$(s1,s2)=(\pi,\pi)$ or $s1=s2= \frac{\pi}{2} , %\frac{\pi}{3}$   HHHHHHHHHHHHHHHHHhh

\subsection{Main result of the paper}  \label{subsec:mainresult}
In this article,   we  complexify  a Coxeter tetrahedron in the real hyperbolic space  ${\bf H}^{3}_{\mathbb R}$  into the complex hyperbolic space  ${\bf H}^{3}_{\mathbb C}$.

 Using the upper space model of  ${\bf H}^{3}_{\mathbb R}$, a Coxeter tetrahedron in  ${\bf H}^{3}_{\mathbb R}$ is determined by four round circles $C_i$ in $\mathbb{C}$ for $1 \leq i \leq 4$. Where each circle $C_i$ is the ideal boundary of a totally geodesic  ${\bf H}^{2 }_{\mathbb R}\hookrightarrow {\bf H}^{3}_{\mathbb R}$, and $\mathbb{C}\cup \infty$ is the ideal boundary of  ${\bf H}^{3}_{\mathbb R}$. In the right subfigure of Figure \ref{figure:tetrahedron}, there is a  configuration of four  round circles in $\mathbb{C}$:
 \begin{itemize}
 	\item the red, purple,  green and blue circles are $C_1$, $C_2$, $C_3$ and $C_4$ respectively;
 	
 	\item   $C_1$ and  $C_3$ intersect perpendicularly at  two points.  $C_2$ and  $C_4$ also  intersect perpendicularly at two points;
 	
 	\item  $C_i$ is tangent to $C_{i+1}$   for $1 \leq  i \leq 4$ mod $4$.
 	
 	\end{itemize}

This configuration of round circles in $\mathbb{C}$ can be obtained as follows. We first take $C_1$ and  $C_3$ with the same radius and  intersect perpendicularly at  two point, see the red and green circles in  right subfigure of Figure \ref{figure:tetrahedron}. We take original round circles $C_2$ and $C_4$ which are 
coincident, such that they are tangent to both 
$C_1$ and  $C_3$ (the original $C_2$ and $C_4$ are both the dashed circle in Figure \ref{figure:tetrahedron}). We view $C_2$ and $C_4$ have angle $\pi$ at this configuration. Then we make $C_2$ with bigger radius, and the center of it to the left of the 
original one. We also make $C_4$ with bigger but the same radius as $C_2$, and the center of it to the right of the 
original one. Moreover each of $C_2$ and $C_4$ is also  tangent to both 
$C_1$ and  $C_3$. When the radii of $C_2$ and $C_4$ diverge to the infinity, that is, the limiting case is that $C_2$ and $C_4$ are two vertical lines tangent to both $C_1$ and $C_3$,  the angle between $C_2$ and $C_4$ converges to zero. So at an
intermediate time, the angle between $C_2$ and $C_4$ is $\frac{\pi}{2}$. 
It is well-known  that there is a unique  configuration of round circles in $\mathbb{C}$ satisfies the above angle conditions up to $\mathbf{PSL}(2,\mathbb{C})$-action (equivalently up to  $\mathbf{PO}(3,1)$-action), see \cite{VinbergS:1993}.

\begin{figure}
	\begin{center}
		\begin{tikzpicture}
		\node at (0,0) {\includegraphics[width=12cm,height=5cm]{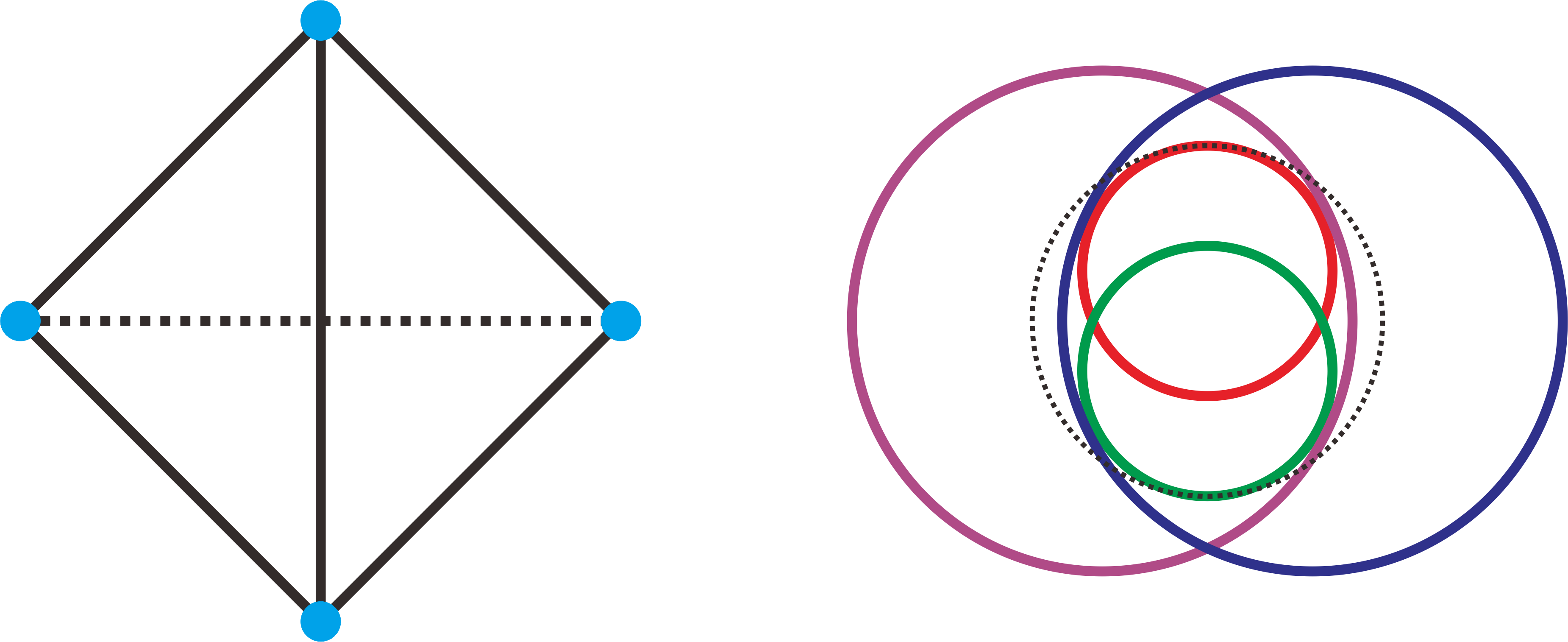}};
		\node at (-5.1,1.3){\LARGE $\frac{\pi}{2}$};
		\node at (-2.2,-1.8){\LARGE $\frac{\pi}{2}$};

		\node at (-2.1,1.5){\small $0$};
		\node at (-3.3,1.2){\small $0$};
		\node at (-2.6,-0.4){\small $0$};
		\node at (-5.0,-1.60){\small $0$};
		
		\node at (1.1,2.05){\small $C_2$};
		
		\node at (5.1,2.05){\small $C_4$};
		
		\node at (3.31,-0.55){\small $C_1$};
		\node at (3.31,0.55){\small $C_3$};
		
		\end{tikzpicture}
	\end{center}
	\caption{The infinite volume Coxeter tetrahedron  $T$ (left) in  ${\bf H}^{3}_{\mathbb R}$ and the ideal boundary its defining  hyperplane configuration (right).}
	\label{figure:tetrahedron}
\end{figure}

So let $T$ be the Coxeter tetrahedron in  ${\bf H}^{3}_{\mathbb R}$,  which is determined by four totally geodesic hyperbolic planes  in  ${\bf H}^{3}_{\mathbb R}$, such that the ideal boundaries of these  hyperbolic planes is the given   configuration of round circles  in right subfigure  of Figure \ref{figure:tetrahedron}. Then 
$T$ is an infinite volume Coxeter tetrahedron with two opposite angles $\frac{\pi}{2}$ and  the other angles are all zeros, see the left subfigure of Figure 	\ref{figure:tetrahedron}.

 Let $G$ be the Coxeter group of  $T$, that is, the reflection group with four generators the (real) reflections about the defining hyperbolic planes of $T$. So 
$$G=\left\langle \iota_1, \iota_2, \iota_3, \iota_4 \Bigg| \begin{array}  {c}   \iota_1^2=  \iota_2^2 = \iota_3^2=\iota_4^2=id,\\ [3 pt]
(\iota_1 \iota_3)^{2}=(\iota_2 \iota_4)^{2}=id
\end{array}\right\rangle$$
as an  abstract  group.
The group $G$ is  isomorphic to $(\mathbb{Z}_2 \oplus\mathbb{Z}_2) \ast(\mathbb{Z}_2 \oplus\mathbb{Z}_2)$ abstractly.  For technical reasons, we also need to consider a  subgroup of $G$, say $$K=\langle \iota_{1}\iota_{3},\iota_{2}\iota_{4},\iota_{1}\iota_{2} \rangle.$$ $K$ is an index  two subgroup of $G$, and $K$ is isomorphic to $\mathbb{Z}_2 \ast \mathbb{Z}_2 \ast \mathbb{Z}$.

%Via Dirichlet  domains, we   show   $\rho_{\theta}$ is a discrete and faithful representation of the group $G$ for all $\theta \in [\frac{5 \pi}{6}, \pi]$. 

In this paper we   study representations $\rho: G \rightarrow \mathbf{PU}(3,1)$,  such that  $\rho( \iota_{i})=I_{i}$ is a complex reflection fixing a  complex hyperbolic plane   in ${\bf H}^{3}_{\mathbb C}$   for $1 \leq i \leq 4$, with the condition   $I_{i}I_{i+1}$ is parabolic for $1 \leq i \leq 4$  mod $4$. This is a   natural complexification of $T$, since we replace the real reflections by complex reflections. The
moduli space $\mathcal{M}$ is parameterized by $\theta \in  [\frac{5 \pi}{6}, \pi]$. In particular, $\theta=\frac{5 \pi}{6}$  and $\theta=\pi$ degenerate to  ${\bf H}^{2}_{\mathbb C}$-geometry and  ${\bf H}^{3}_{\mathbb R}$-geometry   respectively. By this we mean the group $\rho_{\frac{5 \pi}{6}}(G)$ preserves  a totally geodesic ${\bf H}^{2}_{\mathbb C}\hookrightarrow {\bf H}^{3}_{\mathbb C}$  invariant, and $\rho_{\pi}(G)$  preserves   a totally geodesic   ${\bf H}^{3}_{\mathbb R} \hookrightarrow {\bf H}^{3}_{\mathbb C}$ invariant respectively. 
See Section \ref{sec:moduli} for more details.

Using the 
Dirichlet domains of the  $\rho_{\frac{5 \pi}{6}}(G)$-action on ${\bf H}^{2}_{\mathbb C}$ and the  $\rho_{\pi}(G)$-action on  ${\bf H}^{3}_{\mathbb R}$ as guides, the   main result of this paper  is

\begin{thm}\label{thm:complex3dim} $\rho_{\theta}$ is a discrete and faithful representation of the group $G$ into  $\mathbf{PU}(3,1)$ for any $\theta \in [\frac{5 \pi}{6}, \pi]$.
\end{thm}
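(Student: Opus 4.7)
The strategy is to apply the Poincar\'e polyhedron theorem to a family of Dirichlet polyhedra $D_\theta \subset \mathbf{H}^3_{\mathbb{C}}$ constructed uniformly in the parameter $\theta$. The two endpoints guide the whole construction: at $\theta = \pi$ the group $\rho_\pi(G)$ preserves a totally geodesic $\mathbf{H}^3_{\mathbb{R}}$ and one knows explicitly the classical fundamental tetrahedron for the real Coxeter group $G$; at $\theta = \frac{5\pi}{6}$ the group $\rho_{\frac{5\pi}{6}}(G)$ preserves an $\mathbf{H}^2_{\mathbb{C}}$, where Dirichlet domains for reflection groups of this combinatorial type have been treated in the $\mathbf{H}^2_{\mathbb{C}}$-literature. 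My first step would be to choose a base point $p_0 \in \mathbf{H}^3_{\mathbb{C}}$ symmetric under the natural $\mathbb{Z}_2\times\mathbb{Z}_2$ symmetry that swaps $\iota_1\leftrightarrow\iota_3$ and $\iota_2\leftrightarrow\iota_4$, compute the Dirichlet domains of $\rho_{\pi}(G)$ and of $\rho_{\frac{5\pi}{6}}(G)$ centered at $p_0$, and read off a finite, $\theta$-independent list of group elements whose equidistant bisectors from $p_0$ and $\gamma p_0$ will bound $D_\theta$ for intermediate $\theta$.

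Because the involutions $\iota_i$ have fixed complex planes through $p_0$ that lie inside the Dirichlet domain of $G$, it is technically cleaner to first handle the index-two subgroup $K=\langle\iota_1\iota_3,\iota_2\iota_4,\iota_1\iota_2\rangle \cong \mathbb{Z}_2\ast\mathbb{Z}_2\ast\mathbb{Z}$, where all generators act nontrivially on $p_0$; the statement for $G$ then follows by adjoining the $\iota_i$ as a reflection group on the resulting quotient. For $K$ the side-pairings are $\iota_1\iota_3, \iota_2\iota_4$ (of order two) and $\iota_1\iota_2, (\iota_1\iota_2)^{-1}$ (of infinite order with a parabolic fixed point on the ideal boundary), and one expects $D_\theta$ to be bounded by the corresponding four Dirichlet bisectors together possibly with a few additional bisectors forced by the tangency conditions $I_i I_{i+1}$ parabolic.

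The main verification is then the complex hyperbolic Poincar\'e polyhedron theorem: one must check that the four proposed side-pairings are isometries matching codimension-one faces of $D_\theta$ in pairs, that each ridge cycle closes up with the prescribed relation of $K$ (the two relators $(\iota_1\iota_3)^2 = (\iota_2\iota_4)^2 = \mathrm{id}$, with no further relations because the remaining generator is free), and that the parabolic cusp cycles at the tangent points $I_iI_{i+1}$ on $\partial\mathbf{H}^3_{\mathbb{C}}$ satisfy the accidental parabolic condition. A workable organizing principle is to prove that the combinatorial type of $D_\theta$ is locally constant in $\theta$; one verifies the combinatorics at $\theta=\pi$ by reduction to real hyperbolic geometry and then transports it across $[\frac{5\pi}{6},\pi]$ by a continuity/transversality argument, checking that no face collapses, no new intersection appears, and the spinal spheres at the parabolic cusps fit together the same way for every $\theta$.

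The principal obstacle is controlling pairwise and triple intersections of the bounding bisectors throughout the interval. Bisectors in $\mathbf{H}^3_{\mathbb{C}}$ are not totally geodesic, their intersections are Giraud-type disks rather than flat faces, and in dimension three such intersections can develop extra components. Ruling this out uniformly in $\theta$, together with showing that the ``fake boundary'' pieces of each bisector do not creep into $D_\theta$, will require an explicit analytic description of each Giraud disk and a monotonicity argument in $\theta$. A secondary subtlety is the limit $\theta \to \frac{5\pi}{6}$: the domain must degenerate continuously as one real-dimensional direction collapses onto the invariant $\mathbf{H}^2_{\mathbb{C}}$, and one must verify that no side-pairing identification degenerates pathologically in this limit.
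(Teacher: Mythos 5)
Your overall architecture matches the paper's: pass to the index-two subgroup $K$, center a Dirichlet domain at a point fixed by a symmetry of the configuration, read off a finite candidate list of bisectors from the two endpoint geometries ($\mathbf{H}^3_{\mathbb R}$ at $\theta=\pi$, $\mathbf{H}^2_{\mathbb C}$ at $\theta=\frac{5\pi}{6}$), and close with the Poincar\'e polyhedron theorem. Two points, however, are genuine gaps rather than omitted details.

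First, your organizing principle for intermediate $\theta$ --- that the combinatorial type of the bisector arrangement is locally constant, so that one can ``transport'' the endpoint combinatorics by continuity, ``checking that no face collapses, no new intersection appears'' --- is false as stated. The paper's Remark \ref{remark:diff} and Proposition \ref{prop:B123dim} show that $B_{12}\cap B_{34}$ is empty for $\theta$ near $\pi$ but becomes a non-empty Giraud disk as $\theta$ approaches $\frac{5\pi}{6}$: a new intersection of bounding bisectors \emph{does} appear inside the interval. The combinatorics of $D_R$ itself survives only because this Giraud disk always lies on the far side of $B_{13}$ from the center $p_0$, hence never meets the domain; establishing that requires the same kind of explicit parametrization and uniform-in-$\theta$ estimate that your continuity argument was meant to avoid. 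In effect the paper replaces ``transversality plus local constancy'' by global inequalities (minima of explicit functions of $(r,s,\theta)$ over the whole parameter box), and also economizes by observing that, thanks to the symmetry, there is no quadruple intersection of bisectors and one never needs the precise cell structure of the facets --- only their non-emptiness. Your plan, taken literally, would stall at the birth of $B_{12}\cap B_{34}$.

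Second, two smaller inaccuracies. Your reason for passing to $K$ --- that the mirrors of the $\iota_i$ pass through $p_0$ --- cannot be right (then $I_i$ would fix $p_0$ and its bisector would be undefined); the actual obstruction is that $G$ contains infinite dihedral subgroups $\langle\iota_i,\iota_{i+1}\rangle$, whose Dirichlet domains tend to have infinitely many facets by Goldman--Parker, so the domain for $G$ is combinatorially unmanageable. And your list of bounding bisectors (four, ``possibly a few additional'') substantially undercounts: the domain needs ten, namely $(I_1I_2)^{\pm1},(I_2I_3)^{\pm1},(I_3I_4)^{\pm1},(I_4I_1)^{\pm1},I_1I_3,I_2I_4$, organized by the order-four element $J$ conjugating $I_i$ to $I_{i+1}$ (a cyclic $\mathbb{Z}_4$, not the $\mathbb{Z}_2\times\mathbb{Z}_2$ you propose); identifying this set correctly is the crux of the construction, though your plan of reading it off from the endpoint Dirichlet domains would in fact recover it.
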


  % stimulate more study in this promising  direction. 

%More precisely, Theorem \ref{thm:complex3dim} is the first case that the Dirichlet domains of in a nontrivial moduli space  a subgroup in  $\mathbf{PU}(3,1)$ that has been studied.

In fact there is a hidden and lucky $\mathbb{Z}_4$-symmetry of each representation $\rho_{\theta}$. More precisely, there is $J \in  \mathbf{PU}(3,1)$ an order-4 regular elliptic element, such that $I_{i}=JI_{i-1}J^{-1}$  for $i=1,2,3,4$ mod 4.
We denote $A_{i}=I_{i}I_{i+1}$ for $1 \leq i \leq 4$.  Then 
$$\rho_{\theta}(K)=\left\langle A_1, A_2, A_3, A_4   
\right\rangle,$$ such that $A_{i}$ is parabolic and $A_{i}A_{i+1}$ has order two for $1 \leq i \leq 4$.
We consider the Dirichlet domain of $\rho_{\theta}(K)$ with the fixed point of $J$ as the  center. 
 The reason that we work on $\rho_{\theta}(K)$ instead of $\rho_{\theta}(G)$ is that there are subgroups of $G$ which are infinite dihedral  groups.
 Moreover	by Goldman-Parker \cite{Goldmanparker:1992},  the Dirichlet domain of an infinite dihedral group  tends to have infinite facets (depending on the chosen center of the Dirichlet domain). So it seems the Dirichlet domain of $\rho_{\theta}(G)$ is combinatorically much more difficult to study than that of $\rho_{\theta}(K)$ for general $\theta$.

Let
$$R=\{A^{\pm 1}_1,~~A^{\pm 1}_2,~~A^{\pm 1}_3,~~A^{\pm 1}_4, ~~A_1A_2, ~~A_2A_3\}$$
be a subset of $\rho_{\theta}(K)$ consisting of ten elements. We will show the partial Dirichlet  domain 
$D_{R}$ is in fact the Dirichlet domain  of $\rho_{\theta}(K)$, then we obtain Theorem \ref{thm:complex3dim}.

%To our knowledge this is the first nontrivial  example of the Ford domain of a subgroup in  $\mathbf{PU}(3,1)$ that has been studied

%We note that $\operatorname{arccos}(-\frac{7}{8})=2.636232144$ and $\frac{5\pi}{6}=2.617993878$ numerically.   So the two representations  $\rho_{(\sqrt{2}, \operatorname{arccos}(-\frac{7}{8}))}$ and   $\rho_{(\sqrt{2}, \frac{5\pi}{6})}$ are very near (see Figure \ref{figure:moduli}), and then we guess the Ford domains of them are reasonable 
%related. 

%One of the technical  reasons that we can prove Theorem \ref{thm:complex3dim} is that  four points, say 
%$p_0$, $I_1I_2(p_0)$,  $I_1I_3(p_0)$ and$I_1I_3(p_0)$,  are not co-planar, see Lemma \ref{lemma:noncoplane}.  Then we can parametrerize the triple intersections of three bisects???

Since the proof of Theorem \ref{thm:complex3dim}  is much  involved, we prove it in following steps:
\begin{itemize}
	\item We consider firstly the case $\theta = \frac{5 \pi}{6}$, that is, we consider ${\bf H}^{2}_{\mathbb C}$-geometry first.
 The proof when 
 $\theta = \frac{5 \pi}{6}$ can also be viewed as a model of the proof of Theorem \ref{thm:complex3dim} for general $\theta$. We remark that for technical reasons  the proof for $\theta \in (\frac{5 \pi}{6}, \pi]$ in Section \ref{sec:complex3dim} does not hold for $\theta =\frac{5 \pi}{6}$, so we must consider $\theta = \frac{5 \pi}{6}$ separately. Moreover, even through the discreteness  and faithfulness of  $\rho_{\frac{5 \pi}{6}}(K)< \mathbf{PU}(2,1)$ are much easier than general $\rho_{\theta}(K)< \mathbf{PU}(3,1)$, but it is also highly nontrivial;
 
 \item  We then consider the case $\theta = \pi$, that is, we consider ${\bf H}^{3}_{\mathbb R}$-geometry. We remind the reader that in   ${\bf H}^{2}_{\mathbb C}$-geometry and   ${\bf H}^{3}_{\mathbb R}$-geometry of our groups, even through
 the combinatorics of Dirichlet domains for  $\rho_{\frac{5  \pi}{6}}(K)$ and $\rho_{ \pi}(K)$ are different (and they must be different since one has dimension four and the other has dimension three), but they are similar. Moreover,  the words which have contributions to   Dirichlet domains both are the set   $R$.  We also have that the intersection patterns of  Dirichlet domains of  $\rho_{\frac{5  \pi}{6}}(K)$ and $\rho_{ \pi}(K)$  are the same. This can be seen by comparing 	Figure \ref{figure:22infty2dimdirichletabstract} and Figure 	\ref{figure:22inftyrealdirichletabstract} in Sections \ref{sec:Dirich2dim}  and \ref{sec:Dirich3dimreal} respectively. This fact is also very lucky. The part about ${\bf H}^{3}_{\mathbb R}$-geometry  (Section \ref{sec:Dirich3dimreal}) can be omitted 
 logically by two reasons: it is simple, and    the proof in Section  \ref{sec:complex3dim} also covers  ${\bf H}^{3}_{\mathbb R}$-geometry of $\rho_{\pi}(K)$. But the reader are
 encouraged to read this part before Section  \ref{sec:complex3dim};
 
 \item  From above, it is very reasonable to guess that for general $\theta \in (\frac{5 \pi}{6}, \pi]$, the Dirichlet domain of $\rho_{\theta}(K)$ is also  given by the set   $R$. This is what we do in Section  \ref{sec:complex3dim}, that is, we prove ${\bf H}^{3}_{\mathbb C}$-geometry of  Theorem \ref{thm:complex3dim} for  general $\theta \in (\frac{5 \pi}{6}, \pi]$.

 \end{itemize}

% The proof when 
%$\theta = \frac{5 \pi}{6}$ can also be view as a guide of the proof of Theorem \ref{thm:complex3dim} for general $\theta$. We remark that for technical reasons  the proof for $\theta \in (\frac{5 \pi}{6}, \pi]$ in Section \ref{sec:complex3dim} does not hold for $\theta =\frac{5 \pi}{6}$,  we must prove the case $\theta = \frac{5 \pi}{6}$ separately. 
 
%and $\theta = \pi$, that is, we consider ${\bf H}^{2}_{\mathbb C}$-geometry and   ${\bf H}^{3}_{\mathbb R}$-geometry first,

%We remind the reader that in   ${\bf H}^{2}_{\mathbb C}$-geometry and   ${\bf H}^{3}_{\mathbb R}$-geometry of our groups, even through
%the combinatorics of Dirichlet domains for  $\rho_{\frac{5  \pi}{6}}(K)$ and $\rho_{ \pi}(K)$ are different (and must be  different since there have different dimension), but the bisectors consist of theses  Dirichlet domains both goven by the set   $R$. This is also very lucky. We also have  Dirichlet domains for  $\rho_{\frac{5  \pi}{6}}(K)$ and $\rho_{ \pi}(K)$  are very similar,  Comparing to 	Figure \ref{figure:22inftyrealdirichletabstract} and Figure 	\ref{figure:22inftyrealdirichletabstract}. 

In the procedure of the proof of  Theorem \ref{thm:complex3dim}, we also find a method that   parameterizing   the intersection of three co-equidistant bisectors in  ${\bf H}^{3}_{\mathbb C}$. Moreover, for a group in  Theorem \ref{thm:complex3dim}, by the $\mathbb{Z}_4$-symmetry, there are two isometric types of 5-facets, say $s_{12}$ and $s_{13}$; One isometric type of 4-facets, say $s_{12}\cap s_{14}$;   One isometric type of 3-facets, say $s_{12}\cap s_{13}\cap s_{14}$. In particular, there is no quadruple intersection of bisectors.   So we do not need the precisely combinatorial structure of these facets. What we really need is the above  mentioned facets are all non-empty. Which is enough for the  Poincar\'e polyhedron theorem in our (lucky) case. See Section \ref{sec:complex3dim} for more details.

But for  more general subgroups of  $\mathbf{PU}(3,1)$ in future study, quadruple intersections of bisectors are  unavoidably. We pose the following question, by which the author has difficult to show. But we believe it is  fundamental in  ${\bf H}^{3}_{\mathbb C}$-geometry. See Subsection \ref{subsection:TripleHermitiancrossproduct} for the notations and background.

\begin{question} \label{question:3-ball} Assume any lifts of  four points $q_0$, $q_{1}$, $q_{2}$ and $q_{3}$ in ${\bf H}^{3}_{\mathbb C}$   are linearly independent  vectors in $\mathbb{C}^{3,1}$, and the triple intersection $$B(q_0,q_1) \cap B(q_0,q_2) \cap B(q_0,q_3)$$ of  three bisectors  $B(q_0,q_i)$  in ${\bf H}^{3}_{\mathbb C}$ for $i=1,2,3$ is nonempty, then  the triple intersection is a 3-ball. 	
\end{question}

See Figure 	\ref{figure:B12B13B143dim}  for an example of  the boundary of the triple intersection of bisectors, which seems to be 2-sphere.

To our knowledge, Theorem \ref{thm:complex3dim}  is the first moduli space in   ${\bf H}^{3}_{\mathbb C}$-geometry that has been studied completely. There are in fact infinitely many Coxeter polytopes in  ${\bf H}^{3}_{\mathbb R}$. In this paper we only complexify the simplest one.  Specially, finite volume Coxeter polytopes in  ${\bf H}^{3}_{\mathbb R}$ merit further complexifying/deforming in  ${\bf H}^{3}_{\mathbb C}$-geometry to find ${\bf H}^{3}_{\mathbb C}$-lattices.  We hope this paper  may  attract more interest  on  this promising  direction.

%(it seems it changes three times)

%\textbf{Outline of the paper}
 {\bf The paper is organized as follows.} In Section \ref{sec:background}  we give well known background
 material on complex hyperbolic geometry. In Section \ref{sec:gram}, we give the matrix representations of $G$ into $\mathbf{PU}(3,1)$ with complex reflection generators.
  Section \ref{sec:Dirich2dim} is devoted to the  Dirichlet domain of  $\rho_{\frac{5 \pi}{6}}(K) <\mathbf{PU}(2,1)$ acting on  ${\bf H}^{2}_{\mathbb C}$. The   Dirichlet domain of  $\rho_{\pi}(K) <\mathbf{PO}(3,1)$ acting on ${\bf H}^{3}_{\mathbb R}$ is studied  in Section \ref{sec:Dirich3dimreal} (but omitting some details). With the warming up in  Sections \ref{sec:Dirich2dim}  and \ref{sec:Dirich3dimreal}, we prove Theorem \ref{thm:complex3dim}  in Section  \ref{sec:complex3dim}.

%  , we recommend  the reader read the proof  in this section only after   Section   \ref{sec:Ford2dim}

\textbf{Acknowledgement}: The author  would like to thank his co-author Baohua Xie
\cite{MaX:2021}, the author learned a lot  from Baohua on complex hyperbolic geometry.

 \section{Background}\label{sec:background}

 The purpose of this section is to introduce briefly complex hyperbolic geometry. One can refer to Goldman's book \cite{Go} for more details.

% where ${\bf z}=(z_1,z_2,z_3)^T$ and ${\bf w}=(w_1,w_2,w_3)^T$

%$$\mathbf{z}=\left[\begin{matrix}
%z_1\\ z_2\\z_3\\z_4\end{matrix}\right],\quad \mathbf{w}=\left[\begin{matrix}
%w_1\\ w_2\\w_3\\4_4\end{matrix}\right]$$
%are vectors in ${\mathbb C}^4$.
%Take
\subsection{Complex hyperbolic space}  \label{subsec:chs}
Let ${\mathbb C}^{n,1}$  denote the vector space ${\mathbb C}^{n+1}$ equipped with the Hermitian
form  of signature $(n,1)$:
 $$\langle {\bf{z}}, {\bf{w}} \rangle=\bf{w}^* \cdot  H \cdot \bf{z},$$ where $ \bf{w}^*$ is the  Hermitian transpose  of  $\bf{w}$,
$$H=\begin{pmatrix}
Id_{n} & 0\\
0 & -1\\
\end{pmatrix}, $$ and $Id_{n}$ is the $n \times n$ identity matrix.
Then
the Hermitian form divides ${\mathbb C}^{n,1}$ into three parts $V_{-}, V_{0}$ and $V_{+}$. Which are
\begin{eqnarray*}
  V_{-} &=& \{{\bf z}\in {\mathbb C}^{n+1}-\{0\} : \langle {\bf z}, {\bf z} \rangle <0 \}, \\
  V_{0} &=& \{{\bf z}\in {\mathbb C}^{n+1}-\{0\} : \langle {\bf z}, {\bf z} \rangle =0 \}, \\
  V_{+} &=& \{{\bf z}\in {\mathbb C}^{n+1}-\{0\} : \langle {\bf z}, {\bf z} \rangle >0 \}.
\end{eqnarray*}

Let $$[~~]: {\mathbb C}^{n+1}-\{0\}\longrightarrow {\mathbb C}{\mathbf P}^{n}$$ be  the canonical projection onto the  complex projective space. 
Then the {\it complex hyperbolic space} ${\bf H}^{n}_{\mathbb C}$ is the image of $V_{-}$ in ${\mathbb C}{\mathbf P}^{n}$
by the  map  $[~~ ]$. The  {\it ideal boundary} of ${\bf H}^{n}_{\mathbb C}$, or {\it  boundary at infinity}, is  the image of $V_{0}$ in
 ${\mathbb C}{\mathbf P}^{n}$, we denote it by $\partial {\bf H}^{n}_{\mathbb C}$.  In this paper, we will denote by   $$\mathbf{q}=(z_1,z_2, \cdots, z_{n+1})^t$$ a vector in ${\mathbb C}^{n,1}$ (note that we use the boldface $\mathbf{q}$), and 
by $$q=[z_1,z_2, \cdots, z_{n+1}]^t$$ the corresponding point in  ${\mathbb C}{\mathbf P}^{n}$. Here the 
superscript  $``t"$  means the transpose of a vector. 
Consider  the natural biholomorphic embedding of ${\mathbb C}^{n}$ onto the affine patch of ${\mathbb C}{\mathbf P}^{n}$, that is 
\begin{equation}\label{projection}
\left(\begin{matrix} z_1 \\ z_2\\ \vdots \\ z_{n} \end{matrix}\right)
\longmapsto \left[\begin{matrix} z_1 \\
z_2 \\ \vdots \\ z_n \\ 1  \end{matrix}\right].
\end{equation}
Then ${\bf H}^{n}_{\mathbb C}$  is identified with the unit ball $\mathbb{B}^n$ in ${\mathbb C}^{n}$ by this embedding, and 
 $\partial{\bf H}^{n}_{\mathbb C}$ is identified with the unit sphere $\mathbb{S}^{2n-1}=\partial \mathbb{B}^n$  in ${\mathbb C}^{n}$.

There is a typical anti-holomorphic isometry $\iota$ of ${\bf H}^{n}_{\mathbb C}$. $\iota$ is given on the level of homogeneous coordinates by complex conjugate

\begin{equation}\label{antiholo}
\iota:\left[\begin{matrix} z_1 \\ z_2\\ \vdots \\ z_{n+1} \end{matrix}\right]
\longmapsto \left[\begin{matrix} \overline{z_1} \\
\overline{z_2} \\ \vdots \\\overline{z_{n+1}} \end{matrix}\right].
\end{equation}

%$$
%\iota:\left[\begin{matrix} z_1 \\ z_2\\ \vdots \\ z_{n+1} \end{matrix}\right]
%\longmapsto \left[\begin{matrix} \overline{z}_1 \\
%\overline{z}_2 \\ \vdots \\\overline{z}_{n+1} \end{matrix}\right].
%$$

\subsection{Totally geodesic submanifolds and complex reflections}
There are two kinds of totally geodesic submanifolds  in ${\bf H}^{n}_{\mathbb C}$:

\begin{itemize}

\item Given any point $x \in {\bf H}^{n}_{\mathbb C}$, and a complex linear subspace $F$ of dimension $k$ in the tangent space $T_{x}{\bf H}^{n}_{\mathbb C}$, there is a unique complete holomorphic totally geodesic     submanifold contains $x$ and is tangent  to $F$. Such a holomorphic submanifold is called a \emph{$\mathbb{C}^{k}$-plane}.  A $\mathbb{C}^{k}$-plane is the intersection of a complex $k$-dimensional projective subspace in  $\mathbb{C}{\bf P}^{n}$ with ${\bf H}^{n}_{\mathbb C}$, and it is holomorphic  isometric  to ${\bf H}^{k}_{\mathbb C}$.  A $\mathbb{C}^{1}$-plane is also called a \emph{complex geodesic}. The intersection of a $\mathbb{C}^{k}$-plane with $\partial {\bf H}^{n}_{\mathbb C}=\mathbb{S}^{2n-1}$ is a smoothly embedded sphere $\mathbb{S}^{2k-1}$, which is called a  \emph{$\mathbb{C}^{k}$-chain}.

\item  Corresponding to the compatible real structures on $\mathbb{C}^{n,1}$ are the real forms of ${\bf H}^{n}_{\mathbb C}$. That is, the maximal totally real totally geodesic sub-spaces of ${\bf H}^{n}_{\mathbb C}$, which have real dimension $n$.  A maximal totally real totally geodesic subspace of ${\bf H}^{n}_{\mathbb C}$  is the  fixed-point set  of an  anti-holomorphic isometry of ${\bf H}^{n}_{\mathbb C}$. We have gave an example of  anti-holomorphic isometry $\iota$     in (\ref{antiholo}) of Subsection  \ref{subsec:chs}.
For the usual real structure, this submanifold is the real hyperbolic $n$-space ${\bf H}^{n}_{\mathbb R}$ with curvature $-\frac{1}{4}$. Any  totally geodesic subspace of a maximal totally real totally geodesic subspace is a  totally real totally geodesic subspace,  which  is isometric to the real hyperbolic $k$-space ${\bf H}^{k}_{\mathbb R}$ for some $k$.

\end{itemize}

Since the Riemannian sectional curvatures  of the  complex hyperbolic space
are non-constant, there are no totally geodesic hyperplanes in  ${\bf H}^{n}_{\mathbb C}$ when $n \geq 2$.

%Consider the complex hyperbolic space ${\bf H}^2_{\mathbb C}$  and its boundary $\partial{\bf H}^2_{\mathbb C}$. We define
%\emph{$\mathbb{C}$-circles} in $\partial{\bf H}^2_{\mathbb C}$ to be the boundaries %of complex geodesics in ${\bf H}^2_{\mathbb C}$. Analogously,
%We define \emph{$\mathbb{R}$-circles} in $\partial{\bf H}^2_{\mathbb C}$ to be the %boundaries of Lagrangian planes in ${\bf H}^2_{\mathbb C}$.

Let $L$ be a $(n-1)$-dimensional complex plane in ${\bf H}^{n}_{\mathbb C}$,  a {\it polar vector} of $L$ is the unique vector (up to scaling) in ${\mathbb C}^{n,1}$ perpendicular to this complex plane with
respect to the Hermitian form. A polar vector of a $(n-1)$-dimensional complex plane belongs to  $V_{+}$ and each vector in $V_{+}$ corresponds to a $(n-1)$-dimensional complex plane.
Moreover, let $L$ be a $(n-1)$-dimensional complex plane with polar vector
${\bf n}\in V_{+}$,
then the {\it complex reflection} fixing $L$ with rotation angle $\theta$  is given by
\begin{equation} \label{equation:reflection}
 I_{\bf n, \theta}({\bf z}) = -{\bf z}+(1-\mathrm{e}^{\theta   \mathrm{i}})\frac{\langle {\bf z}, {\bf n} \rangle}{\langle {\bf n},{\bf n}\rangle}{\bf n}.
\end{equation} 

The complex plane $L$ is also called the \emph{mirror} of $I_{\bf n, \theta}$. In this paper, we only consider the case  $\theta= \pi$, then the complex reflection has order 2.

%then $L$ is the projection of the subspace $\{{\bf z}\in V_{-}:\langle {\bf z}, {\bf c} \rangle=0\}$.
%$$
%I_{\bf n, \theta}({\bf z}) = -{\bf z}+(1-\mathrm{e}^{\theta   \mathrm{i}})\frac{\langle {\bf z}, {\bf n} \rangle}{\langle {\bf n},{\bf n}\rangle}{\bf n}.
%$$
% In this paper, we may assume $\theta= \frac{2 \pi}{m}$ for some $m \in \mathbb{Z}_{\geq 2}$, and then the complex reflection has order $m$.
% \nonumber to remove numbering (before each equation)

 \subsection{The isometries} The complex hyperbolic space is a K\"{a}hler manifold of constant holomorphic sectional curvature $-1$.
 We denote by $\mathbf{U}(n,1)$ the Lie group of the Hermitian form  $\langle \cdot,\cdot\rangle$ preserving complex linear
 transformations and by $\mathbf{PU}(n,1)$ the group modulo scalar matrices. The group of holomorphic
 isometries of ${\bf H}^{n}_{\mathbb C}$ is exactly $\mathbf{PU}(n,1)$. It is sometimes convenient to work with
 $\mathbf{SU}(n,1)$.
 The full isometry group of ${\bf H}^{n}_{\mathbb C}$ is
 $$\widehat{\mathbf{PU}(n,1)}=\langle \mathbf{PU}(n,1),\iota\rangle,$$
 where $\iota$ is the anti-holomorphic isometry in Subsection   \ref{subsec:chs}.

%given on the level of homogeneous coordinates by complex conjugacy
% $$
% \iota:\left[\begin{matrix} z_1 \\ z_2 \\ z_3  \\ z_4\end{matrix}\right]
% \longmapsto \left[\begin{matrix} \overline{z}_1 \\
% \overline{z}_2 \\ \overline{z}_3 \\ \overline{z}_4  \end{matrix}\right].
% $$
 
 Elements of $\mathbf{SU}(n,1)$ fall into three types, according to the number and types of  fixed points of the corresponding
 isometry. Namely,
 \begin{itemize}
 	\item
  an isometry is {\it loxodromic} if it has exactly two fixed points 
 on $\partial {\bf H}^{n}_{\mathbb C}$;
 	\item  an isometry is  {\it parabolic} if it has exactly one fixed point
 	on $\partial {\bf H}^{n}_{\mathbb C}$;
	\item   an isometry is {\it elliptic}  when it has (at least) one fixed point inside ${\bf H}^{n}_{\mathbb C}$. 
 \end{itemize}

An element $A\in \mathbf{SU}(n,1)$ is called {\it regular} whenever it has  distinct eigenvalues,
 an elliptic $A\in \mathbf{SU}(n,1)$ is called  {\it special elliptic} if
 it has a repeated eigenvalue. Complex reflection about a totally geodesic  ${\bf H}^{n-1}_{\mathbb C} \hookrightarrow {\bf H}^{n}_{\mathbb C}$ is an example of  special elliptic element in  $\mathbf{SU}(n,1)$.

% which is a 4-fold cover of  $\mathbf{PU}(n,1)$.

%There is a special class of unipotent elements in $\mathbf{SU}(3,1)$,
%suppose that $T\in \mathbf{SU}(3,1)$ has trace  $3$, then all eigenvalues of $T$ equal to $1$, that is $T$ is {\it unipotent}.

%Isometric spheres are special examples of bisectors.

\subsection{Bisectors, spinal spheres and  Dirichlet domain}
 Dirichlet domain (or Dirichlet polyhedron) is a fundamental tool to study a discrete subgroup in $\mathbf{SU}(n,1)$, which is defined in terms  of (infinitely many) bisectors.
 
% In this subsection, we will describe a convenient set of
%coordinates for bisector intersections, deduced from  the slice decomposition of a bisector.

\begin{defn} Given two distinct points $q_0$  and $q_1$ in ${\bf H}^{n}_{\mathbb C}$ with the same norm (e.g. one could
	take lifts $\mathbf{q_0},\mathbf{q_1}$ of them such that $\langle \mathbf{q_0},\mathbf{q_0}\rangle=\langle \mathbf{q_1},\mathbf{q_1}\rangle= -1$), the \emph{bisector} $B(q_0,q_1)$ is the projectivization  of the set of negative   vector $\mathbf{x}$ in ${\mathbb C}^{n, 1}$
	with
	$$|\langle \mathbf{x}, \mathbf{q_0}\rangle|=|\langle  \mathbf{x}, \mathbf{q_1}\rangle|.$$
\end{defn}

The  {\it spinal sphere} of the bisector $B(q_0,q_1)$ is the intersection of $\partial {\bf H}^{n}_{\mathbb C}$ with the closure of $B(q_0,q_1)$ in $\overline{{\bf H}^{n}_{\mathbb C}}= {\bf H}^{n}_{\mathbb C}\cup \partial { {\bf H}^{n}_{\mathbb C}}$. The bisector $B(q_0,q_1)$ is a topological $(2n-1)$-ball, and its spinal sphere is a $(2n-2)$-sphere.

%We write $q=[z,t]\in\mathcal{N}$ for $z \in \mathbb{C}$ and $t \in \mathbb{R}$ or $q=[x, y,t]\in\mathcal{N}$ for $x,y,t$ reals and $z=x+yi$.

%\begin{equation}\label{eq:cygan-metric}
%d_{\textrm{Cyg}}(p,q)=|2\langle {\bf{p}}, {\bf{q}} \rangle|^{1/2}=\left| |z_1-w_1|^2+\cdots+|z_{n-1}-w_{n-1}|^2-i(t-s+2{\rm{Im}}\langle \langle  z, w \rangle \rangle) \right|^{1/2},
%\end{equation}

%We summarize without proofs the relevant properties on isometric sphere in the following proposition.

A bisector $B(q_0,q_1)$ separates  ${\bf H}^{n}_{\mathbb C}$ into two half-spaces, one of them containing $q_0$. 
\begin{defn}The \emph{Dirichlet domain} $D_{\Gamma}$ for a discrete group $\Gamma < \mathbf{PU}(n,1)$ centered at $q_{0}$ is
	the intersection of the (closures of the) half-spaces  containing  $q_{0}$ of all bisectors corresponds to  elements in $\Gamma$ not fixing $q_{0}$. That is,
	$$D_{\Gamma}=\{p\in {\bf H}^{n}_{\mathbb C} \cup \partial{\bf H}^{n}_{\mathbb C}: |\langle \mathbf{p},\mathbf{q_0}\rangle|\leq|\langle \mathbf{p},g(\mathbf{q_0})\rangle|,
	\ \forall g \in \Gamma \ \mbox{with} \ g(q_{0})\neq q_{0} \}.$$
\end{defn}

\begin{defn}For a subset $R \subset \Gamma$, the  \emph{partial Dirichlet domain} $D_{R}$ for a discrete group $\Gamma < \mathbf{PU}(n,1)$ centered at $q_{0}$ is
	$$D_{R}=\{p\in {\bf H}^{n}_{\mathbb C} \cup \partial{\bf H}^{n}_{\mathbb C}: |\langle \mathbf{p},\mathbf{q_0}\rangle|\leq|\langle \mathbf{p},g(\mathbf{q_0})\rangle|,
	\ \forall g \in R \ \mbox{with} \ g(q_{0})\neq q_{0} \}.$$
\end{defn}

%When $q_{\infty}$ is either in the domain  of discontinuity or is a parabolic fixed point, the Ford  domain  is preserved by $\Gamma_{\infty}$, the stabilizer of
%$q_{\infty}$ in $\Gamma$.  In this case, $D_{\Gamma}$ is only a fundamental domain modulo the action of $\Gamma_{\infty}$.  In other words, the fundamental domain for
%$\Gamma$ is the intersection of the Ford domain with a fundamental domain for $\Gamma_{\infty}$.

 From the definition, one can see that parts of bisectors form the boundary of a Dirichlet domain.  For $g \in \Gamma$, when the center $q_0$ is clear  from the context,  we also denote  $B(q_0, g(q_0))$  by $B(g)$, and  we will denote by $s(g)=B(g)  \cap D_{\Gamma}$, which is a $(2n-1)$-facet of $D_{\Gamma}$ in general.
  Facets of codimension one in $D_{\Gamma}$   will also be called {\it sides}.  In general, facets of codimension two in $D_{\Gamma}$   will be called {\it ridges}.
Facets of dimension one and zero in $D_{\Gamma}$ will be called  {\it edges} and {\it vertices} respectively. Moreover, a  {\it bounded ridge} is a ridge which does not intersect  $\partial {\bf H}^{n}_{\mathbb C}$, and if the intersection of a ridge $r$ and $\partial {\bf H}^{n}_{\mathbb C}$ is non-empty, then
$r$ is an {\it infinite ridge}.

It is usually very hard to determine $D_{\Gamma}$ because one should check infinitely many inequalities.
Therefore a general method will be to guess that a partial Dirichlet domain $D_{R}$ is in fact the  Dirichlet domain $D_{\Gamma}$,  and then  check  it using the Poincar\'e polyhedron theorem.
The basic idea is that the sides of $D_{\Gamma}$ should be paired by isometries, and the images of $D_{\Gamma}$
under these so-called side-pairing maps should give a local tiling of ${\bf H}^{n}_{\mathbb C}$.  If they do (and if
the quotient of $D_{\Gamma}$ by the identification given by the side-pairing maps is complete), then the Poincar\'{e} polyhedron
theorem implies that the images of $D_{\Gamma}$ actually give a global tiling of ${\bf H}^{n}_{\mathbb C}$.  So  $D_{\Gamma}$ is a fundamental domain of $\Gamma$-action on ${\bf H}^{n}_{\mathbb C}$.

Once a fundamental domain is obtained, one gets an explicit presentation of $\Gamma$ in terms of the generators given by the side-pairing maps together
with a generating set for the stabilizer $p_{0}$. Where the relations correspond to so-called ridge cycles, which correspond to the local tilings bear each co-dimension two facet. For more on the Poincar\'e polyhedron theorem, see \cite{dpp:2016, ParkerWill:2017}.

\subsection{Hermitian cross product in  ${\bf H}^2_{\mathbb C}$ and double intersection of  bisectors}\label{subsection:Spinalcoordinates}
The intersections of bisectors in ${\bf H}^2_{\mathbb C}$ are a little easier to describe than in higher dimensions. We show this in this subsection.

%, we will describe a convenient set of
%coordinates for bisector intersections, deduced from  the slice decomposition of a bisector.

For complex hyperbolic plane  ${\bf H}^2_{\mathbb C}$ with Hermitian form given by
$$H=\begin{pmatrix}
Id_{2} & 0\\
0 & -1\\
\end{pmatrix}. $$
If
$$p=\left[\begin{matrix}
p_1\\ p_2\\p_3\end{matrix}\right],\quad q=\left[\begin{matrix}
q_1\\ q_2\\q_3\end{matrix}\right]$$ are two points in $\mathbf{H}^2_{\mathbb C}$, then the {\it Hermitian cross product} of $p$ and $q$  is a point in $ {\mathbb C}{\mathbf P}^{2}$ defined by
$$p \boxtimes
q=
\left[\begin{array}{c}
\overline{p}_3\overline{q}_2-\overline{p}_2\overline{q}_3\\  [2 pt] \overline{p}_1\overline{q}_3-\overline{p}_3\overline{q}_1\\ [2 pt] \overline{p}_1\overline{q}_2-\overline{p}_2\overline{q}_1
\end{array}
\right],$$
see Page 45 of \cite{Go}. Any lift of   $p \boxtimes
q$ is orthogonal to lifts of both $p$ and $q$  with  respect to the Hermitian form  $\langle \cdot,\cdot\rangle$.
It is a Hermitian version of the Euclidean cross product in  ${\mathbb R}^3$.

In order to analyze  2-faces of a Dirichlet polyhedron in ${\bf H}^2_{\mathbb C}$, we must study the intersections of bisectors.
From the detailed analysis in \cite{Go}, we know that the intersection of two bisectors is usually not totally geodesic and it can be somewhat complicated. In  this paper, we shall only consider the intersections of
\emph{co-equidistant bisectors}, i.e. bisectors equidistant from a common point.   When $p,q$ and $r$ are not in a common complex line, that is,  their lifts are linearly independent in $\mathbb {C}^{2,1}$, then the locus $$B(p,q,r)=B(p,q) \cap B(p,r)$$ of points in $ {\bf H}^2_{\mathbb C}$
equidistant to  $p$, $q$ and $r$ is a smooth disk that is not totally geodesic, and is often called a \emph{Giraud disk}.  The following property is crucial when studying fundamental domain.

\begin{prop}[Giraud] \label{prop:Giraud}
	If $p$, $q$ and $r$ in ${\bf H}^2_{\mathbb C}$ are not in a common complex line, then the Giraud disk $B(p,q,r)$ is contained in precisely three bisectors, namely $B(p,q)$, $B(q,r)$ and  $B(p,r)$.
\end{prop}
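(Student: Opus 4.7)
The plan is first to observe that the three bisectors $B(p,q)$, $B(q,r)$, $B(p,r)$ trivially contain $B(p,q,r)$ from the defining equidistance conditions, and then to prove that no bisector distinct from these three (as a subset of ${\bf H}^2_{\mathbb C}$) can contain the Giraud disk. The hypothesis that $p,q,r$ lie in no common complex line is equivalent to saying that lifts $\mathbf{p},\mathbf{q},\mathbf{r}$ normalized by $\langle\mathbf{p},\mathbf{p}\rangle=\langle\mathbf{q},\mathbf{q}\rangle=\langle\mathbf{r},\mathbf{r}\rangle=-1$ form a basis of $\mathbb{C}^{2,1}$. I would then pass to the adapted coordinates $w_1=\langle\mathbf{x},\mathbf{p}\rangle$, $w_2=\langle\mathbf{x},\mathbf{q}\rangle$, $w_3=\langle\mathbf{x},\mathbf{r}\rangle$, in which the Giraud disk is cut out by $|w_1|=|w_2|=|w_3|$.

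Now suppose a bisector $B(u,v)$ contains $B(p,q,r)$, and expand lifts $\mathbf{u}=a\mathbf{p}+b\mathbf{q}+c\mathbf{r}$, $\mathbf{v}=a'\mathbf{p}+b'\mathbf{q}+c'\mathbf{r}$ normalized to squared norm $-1$. The containment becomes the pointwise identity
\[|\overline{a}w_1+\overline{b}w_2+\overline{c}w_3|^2=|\overline{a'}w_1+\overline{b'}w_2+\overline{c'}w_3|^2\]
on the locus $|w_1|=|w_2|=|w_3|$. Expanding both sides and regarding the difference as a real-analytic function of the two independent phase variables $\arg(w_1/w_2)$ and $\arg(w_1/w_3)$, its vanishing on the $2$-dimensional Giraud disk forces identical vanishing; matching Fourier coefficients then yields
\[|a|^2+|b|^2+|c|^2=|a'|^2+|b'|^2+|c'|^2,\quad \overline{a}b=\overline{a'}b',\quad \overline{a}c=\overline{a'}c',\quad \overline{b}c=\overline{b'}c'.\]

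A short case analysis on the vanishing of the coefficients then completes the argument. If all six are nonzero, the middle two identities give $b/b'=c/c'=\overline{a'}/\overline{a}$; calling this common value $\lambda$, the fourth identity forces $|\lambda|=1$, so $\mathbf{u}=\lambda\mathbf{v}$ and thus $u=v$, contradicting that $B(u,v)$ is a bisector. If one coordinate of $\mathbf{u}$ vanishes, say $a=0$, then either $a'=0$, in which case the difference $|\langle\mathbf{x},\mathbf{u}\rangle|^2-|\langle\mathbf{x},\mathbf{v}\rangle|^2$ collapses to a scalar multiple of $|w_2|^2-|w_3|^2$, and hence $B(u,v)$ is either all of ${\bf H}^2_{\mathbb C}$ (forcing $u=v$) or exactly $B(q,r)$; or else $a'\neq 0$, in which case the identities force $b'=c'=0$ and thus $v=p$, while $\overline{b}c=0$ gives $u\in\{q,r\}$, so $B(u,v)\in\{B(p,q),B(p,r)\}$. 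Symmetric vanishing patterns are handled identically. The main technical subtlety is the subcase $a=a'=0$: the defining pair $(u,v)$ need not coincide with $(q,r)$ yet the bisector as a subset still equals $B(q,r)$, so recognizing that the statement asserts uniqueness of the three bisectors \emph{as sets}, rather than of their defining pairs, is the key observation.
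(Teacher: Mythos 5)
The paper itself offers no proof of this proposition: it is stated as Giraud's classical theorem and used as a black box, with Goldman's book \cite{Go} as the implicit reference. So your argument is not competing with an in-paper proof; judged on its own terms it is correct, and it is essentially the standard argument. The hypothesis that $p,q,r$ lie on no common complex line enters exactly where you use it, namely to make $w_i=\langle\mathbf{x},\cdot\rangle$ a genuine coordinate system, and the reduction of the containment to the four Fourier-coefficient identities is sound because the difference $|\langle\mathbf{x},\mathbf{u}\rangle|^2-|\langle\mathbf{x},\mathbf{v}\rangle|^2$ restricted to the locus $|w_1|=|w_2|=|w_3|$ is a trigonometric polynomial whose frequency set $\{0,\pm(1,0),\pm(0,1),\pm(-1,1)\}$ consists of distinct characters. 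One step deserves to be made explicit: ``vanishing on the $2$-dimensional Giraud disk forces identical vanishing'' requires knowing that the disk has nonempty \emph{open} image in the phase torus; this follows from the spinal coordinates of Subsection \ref{subsection:Spinalcoordinates}, in which $w_1$ is constant and $w_2,w_3$ are constants times $z_1,z_2$, so the phase map is a diffeomorphism of the Giraud torus onto $\mathbb{S}^1\times\mathbb{S}^1$ and real-analytic continuation applies. Your treatment of the degenerate subcase $a=a'=0$ is the genuinely delicate point and you handle it correctly: a bisector is equidistant from a two-parameter family of pairs on its complex spine, so one can only conclude equality of bisectors as subsets of ${\bf H}^2_{\mathbb C}$, which is precisely what the proposition asserts. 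The case analysis is complete, since a vanishing coordinate of $\mathbf{v}$ paired with all coordinates of $\mathbf{u}$ nonzero is ruled out by the same identities. What your approach buys over the citation is a short, self-contained, purely algebraic proof that generalizes verbatim to triple intersections of co-equidistant bisectors in ${\bf H}^n_{\mathbb C}$.
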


Note that checking whether an isometry maps  a Giraud disk to another is equivalent to checking that corresponding triples of  points are mapped to each other.

In order to study Giraud  disks, we will use {\it spinal coordinates}. The  {\it complex spine} of the bisector $B(p,q)$ is the complex line through the two points $p$ and $q$. The {\it real spine} of $B(p,q)$
is the intersection of the complex spine with the bisector itself, which is a (real) geodesic. The real spine is the locus of points inside the complex spine which are equidistant from $p$ and $q$.
Bisectors are not totally geodesic, but they have a very nice foliation by two different families of totally geodesic submanifolds. Mostow \cite{Mostow:1980}  showed that a bisector is the preimage of the real
spine under the orthogonal projection onto the complex spine. The fibres of this projection are complex lines ${\bf H}^{1}_{\mathbb C} \hookrightarrow {\bf H}^{2}_{\mathbb C}$ called the {\it complex slices} of the bisector. Goldman \cite{Go} showed that a bisector is
the union of all  totally real totally geodesic planes containing the real spine. Such Lagrangian planes are called the {\it real slices} or {\it meridians} of the bisector.
 The complex slices of $B(p,q)$ are given explicitly by choosing a lift $\mathbf{p}$ (resp. $\mathbf{q}$) of $p$ (resp. $q$).
When $p,q\in  {\bf H}^2_{\mathbb C}$, we simply choose lifts such that $\langle \mathbf{p},\mathbf{p}\rangle= \langle \mathbf{q},\mathbf{q}\rangle$.
The complex slices of $B(p,q)$ are obtained as the set of negative lines $(\overline{z}\mathbf{p}-\mathbf{q})^{\bot}$ in ${\bf H}^2_{\mathbb C}$ for some arc of values of $z\in \mathbb{S}^1$,  which is determined by requiring that $\langle \overline{z}\mathbf{p}-\mathbf{q},\overline{z}\mathbf{p}-\mathbf{q}\rangle>0$.

%In this paper, we will mainly use these parametrization when  $p,q\in  \partial{\bf H}^2_{\mathbb C}$. In that case, the condition
%$\langle \mathbf{p},\mathbf{p}\rangle= \langle \mathbf{q},\mathbf{q}\rangle$  is vacuous, since all lifts are null vectors; we then choose some fixed lift $\mathbf{p}$
%for  the center of the Dirichlet domain, and we take $ \mathbf{q}=G( \mathbf{p})$ for some $G\in \mathbf{U}(2,1)$. For  a different matrix $G'=SG$, with $S$
%is a scalar matrix, note  that the diagonal element of  $S$ is a unit complex number, so $\mathbf{q}$ is well defined up to a unit complex number.

Since a point of the bisector is on precisely one complex slice, we can parameterize the {\it Giraud torus}  $\hat{B} (p,q,r)$  in ${\bf P}^2_{\mathbb C}$   by $(z_1,z_2)=(e^{it_1},e^{it_2})\in \mathbb{S}^1\times \mathbb{S}^1$ via
	\begin{equation} \label{equaation:girauddisk2dim}
	V(z_1,z_2)=(\overline{z}_1\mathbf{p}-\mathbf{q})\boxtimes (\overline{z}_2\mathbf{p}-\mathbf{r})=\mathbf{q}\boxtimes \mathbf{r}+z_1 \mathbf{r}\boxtimes \mathbf{p}+z_2 \mathbf{p}\boxtimes \mathbf{q}.
	\end{equation}
The Giraud disk $B(p,q,r)$ corresponds to $(z_1,z_2)\in \mathbb{S}^1\times \mathbb{S}^1 $  with  $$\langle V(z_1,z_2),V(z_1,z_2)\rangle<0.$$ 
It is well known that  this region is a topological disk if it is non empty \cite{Go}.
The boundary at infinity $\partial B(p,q,r)$ is a circle, given in spinal coordinates by the equation
$$\langle V(z_1,z_2),V(z_1,z_2)\rangle=0.$$
Note that the choices of different lifts of  $p$, $q$ and $r$ affect the spinal coordinates by rotation on each of the $\mathbb{S}^1$-factors.

%It follows from the fact the bisectors are covertical that  this region is a topological disk, see \cite{Go}.

A defining equation for the trace of another bisector $B(u,v)$ on the Giraud disk $B(p,q,r)$ can be written in the form
$$| \langle V(z_1,z_2),\mathbf{u}\rangle|=| \langle V(z_1,z_2),\mathbf{v}\rangle|,$$  provided that $\mathbf{u}$ and $\mathbf{v}$ are suitably chosen lifts.
The expressions $\langle V(z_1,z_2),\mathbf{u}\rangle$ and $\langle V(z_1,z_2), \mathbf{v}\rangle$ are affine in $z_1$ and $z_2$.

This triple bisector intersection can be parameterized  fairly explicitly, because one can solve the equation
$$|\langle V(z_1,z_2),\mathbf{u}\rangle|^2=|\langle V(z_1,z_2),\mathbf{v}\rangle|^2 $$ for one of the variables $z_1$ or $z_2$ simply by solving a quadratic equation.
A detailed explanation of how this works can be found in \cite{Deraux:2016gt, DerauxF:2015, dpp:2016}.

\subsection{Triple Hermitian cross product in  ${\bf H}^3_{\mathbb C}$ and  triple intersection of bisectors}\label{subsection:TripleHermitiancrossproduct}

Now we consider   triple intersections of bisectors  in  ${\bf H}^3_{\mathbb C}$.

Let $q_0$, $q_1$, $q_2$ and $q_3$ be four points in   $\mathbf{H}^3_{\mathbb C}$. We take lifts  $\mathbf{q_i}$  of $q_i$ such that $$\langle \mathbf{q_0},\mathbf{q_0} \rangle=\langle \mathbf{q_1},\mathbf{q_1} \rangle=\langle \mathbf{q_2},\mathbf{q_2} \rangle=\langle \mathbf{q_3},\mathbf{q_3} \rangle. $$
We also  assume $\{\mathbf{q_0},\mathbf{q_1}, \mathbf{q_2}, \mathbf{q_3} \}$  are linearly independent as vectors in  $\mathbb{C}^{3,1}$.
We have three bisectors $B(q_0,q_1)$,  $B(q_0,q_2)$ and $B(q_0,q_3)$ in  ${\bf H}^3_{\mathbb C}$. We now   parameterize the triple intersection $$B(q_0,q_1) \cap B(q_0,q_2) \cap B(q_0,q_3).$$
The bisector $B(q_0,q_1)$ has a decomposition into a set of complex slices (each of them is a totally geodesic  $\mathbf{H}^2_{\mathbb C} \hookrightarrow \mathbf{H}^3_{\mathbb C}$), these complex slices are obtained as  the set of negative lines in  $(w \mathbf{q_0}-\mathbf{q_1})^{\perp}$ 
for some arc of values $w \in \mathbb{S}^1$,   see \cite{Go,Deraux:2016gt}.
Similarly, $B(q_0,q_2)$ and  $B(q_0,q_3)$ also have  decompositions into a set of complex slices  which are    parameterized  by  $(w \mathbf{q_0}- \mathbf{q_2})^{\perp}$ and  $(w \mathbf{q_0}-\mathbf{q_3})^{\perp}$ for some arc of values $w \in \mathbb{S}^1$. 

Consider triple Hermitian cross product with respect to the Hermitian form $H$. Recall that for three linearly independent vectors 
 \begin{equation}\label{abc}
a=\left(\begin{matrix} a_1, a_2, a_3, a_4 \end{matrix}\right)^{t}
,~~b= \left(\begin{matrix} b_1, b_2, b_3,b_4 \end{matrix}\right)^{t},~~c= \left(\begin{matrix} c_1, c_2, c_3,c_4 \end{matrix}\right)^{t}
\end{equation}
in $\mathbb{R}^4$,  the \emph{generalized cross product $a \times b \times c$} of $a,b,c$ is a vector 
 \begin{equation} \nonumber d=\left(\begin{matrix} d_1, d_2, d_3, d_4 \end{matrix}\right)^{t}.
 \end{equation}
 Where $d_i$ is the coefficient of $e_i$ in  the determinant of the matrix 
\begin{equation} \nonumber \begin{pmatrix}
a_1&b_1& c_1&e_1\\
a_2& b_2 &c_2& e_2\\
a_3&b_3&c_3 &e_3\\
a_4 & b_4&c_4&e_4\\
\end{pmatrix}.\end{equation}
The vector $d=a \times b \times c$  is perpendicular to $a$, $b$ and $c$ with respect to the  standard Euclidean metric in  $\mathbb{R}^4$.
Now for  three linearly independent vectors $a$, $b$, $c$ as in (\ref{abc}) in $\mathbb{C}^{3,1}$ with $a_i,b_i,c_i \in \mathbb{C}$,
 the \emph{triple  Hermitian cross product $a \boxtimes b \boxtimes c$} of $a$, $b$, $c$ is a vector  \begin{equation} \nonumber
 d=\left(\begin{matrix} d_1, d_2, d_3, d_4 \end{matrix}\right)^{t}.
\end{equation}
Where by definition 
\begin{equation} \nonumber d=\begin{pmatrix}
-1&0& 0&0\\
0& -1&0& 0\\
0&0&-1 &0\\
0& 0&0&1\\
\end{pmatrix} \cdot (\bar{a}\times \bar{b} \times \bar{c}),
\end{equation}
here for example  \begin{equation}\nonumber
\bar{a}=\left(\begin{matrix} \overline{a_1}, \overline{a_2}, \overline{a_3}, \overline{a_4} \end{matrix}\right)^{t}
\end{equation} with $ \overline{a_i}$ the complex conjugate of $a_i$.
 Then  $d=a \boxtimes b \boxtimes c$ is the vector 
 \begin{equation}\nonumber
 \left(\begin{matrix} \det\begin{pmatrix}
 \overline{a_2}&\overline{b_2}& \overline{c_2}\\
\overline{a_3}&\overline{b_3}& \overline{c_3}\\
\overline{a_4}&\overline{b_4}& \overline{c_4} \\
 \end{pmatrix}, -\det\begin{pmatrix}
 \overline{a_1}&\overline{b_1}& \overline{c_1}\\
 \overline{a_3}&\overline{b_3}& \overline{c_3}\\
 \overline{a_4}&\overline{b_4}& \overline{c_4} \\
 \end{pmatrix}, \det\begin{pmatrix}
 \overline{a_1}&\overline{b_1}& \overline{c_1}\\
 \overline{a_2}&\overline{b_2}& \overline{c_2}\\
 \overline{a_4}&\overline{b_4}& \overline{c_4} \\
 \end{pmatrix}, \det\begin{pmatrix}
 \overline{a_1}&\overline{b_1}& \overline{c_1}\\
 \overline{a_2}&\overline{b_2}& \overline{c_2}\\
 \overline{a_3}&\overline{b_3}& \overline{c_3} \\
 \end{pmatrix} \end{matrix}\right)^{t}.
 \end{equation}
 By direct calculation, we have $$\langle a, a \boxtimes b \boxtimes c\rangle=\langle b, a \boxtimes b \boxtimes c\rangle=\langle c, a \boxtimes b \boxtimes c\rangle=0 $$ with respect to the Hermitioan form $H$ on $\mathbb{C}^{3,1}$.

 Since a point in a bisector $B$  lies in precisely one complex slice of $B$, we can  parameterize  $B(q_0,q_1) \cap B(q_0,q_2) \cap B(q_0,q_3)$  by $$V(w_1,w_2,w_3)=(\overline{w_1}\mathbf{q_0}-\mathbf{q_1}) \boxtimes(\overline{w_2}\mathbf{q_0}-\mathbf{q_2)} \boxtimes(\overline{w_3} \mathbf{q_0}-\mathbf{q_3})$$
 with $(w_1,w_2,w_3) \in \mathbb{S}^1 \times\mathbb{S}^1 \times\mathbb{S}^1$.
 Up to sign and rewriting, we can   parameterize  $B(q_0,q_1) \cap B(q_0,q_2) \cap B(q_0,q_3)$  by 
 \begin{equation}\label{equation:tripleintersection} V(z_1,z_2,z_3)=\mathbf{q_1} \boxtimes \mathbf{q_2} \boxtimes \mathbf{q_3}+z_1 \cdot \mathbf{q_0} \boxtimes \mathbf{q_2} \boxtimes \mathbf{q_3}+z_2 \cdot \mathbf{q_0} \boxtimes \mathbf{q_1} \boxtimes \mathbf{q_3}+z_3 \cdot \mathbf{q_0} \boxtimes \mathbf{q_1} \boxtimes \mathbf{q_2}\end{equation}
 with $(z_1,z_2,z_3) \in \mathbb{S}^1 \times\mathbb{S}^1 \times\mathbb{S}^1$ such that  $\langle V(z_1,z_2,z_3), V(z_1,z_2,z_3)\rangle$ is negative.

 We remark that it is reasonable to guess that for fixed $\{q_0, q_1, q_2, q_3\}$ in $\mathbf{H}^3_{\mathbb C}$, if there is  $(z_1,z_2,z_3) \in \mathbb{S}^1 \times\mathbb{S}^1 \times\mathbb{S}^1$  such that $V=V(z_1,z_2,z_3)$ is negative in (\ref{equation:tripleintersection}), then    all $(z_1,z_2,z_3) \in \mathbb{S}^1 \times\mathbb{S}^1 \times\mathbb{S}^1$ satisfying this condition should be a 3-ball in $\mathbb{S}^1 \times\mathbb{S}^1 \times\mathbb{S}^1$. But the author has difficult to show this. See Question \ref{question:3-ball}.

\section{The moduli space of representations of $G$ into $\mathbf{PU}(3,1)$} \label{sec:gram}
\label{sec:moduli}

In this section, we give the matrix representations of $G$ into $\mathbf{PU}(3,1)$ with complex reflection generators and $\iota_{i}\iota_{i+1}$ is mapped to a   parabolic element  for $i=1,2,3,4$ mod $4$.

Let $G$ be the  abstract group with the presentation
$$G=\left\langle \iota_{1}, \iota_{2},\iota_{3},\iota_{4} \Bigg| \begin{array}  {c}  \iota_{1}^2= \iota_{2}^2= \iota_{3}^2=  \iota_{4}^2=id,\\ [ 3 pt]
( \iota_{1} \iota_{3})^{2}=  (\iota_{2} \iota_{4})^{2}=id
\end{array}\right\rangle.$$
$G$ is isomorphic to  $(\mathbb{Z}_2 \oplus\mathbb{Z}_2) \ast(\mathbb{Z}_2 \oplus\mathbb{Z}_2)$ abstractly.   Then $K=\langle \iota_{1}\iota_{3},\iota_{2}\iota_{4},\iota_{1}\iota_{2} \rangle$ is an index  two subgroup of $G$, which is isomorphic to $\mathbb{Z}_2 \ast \mathbb{Z}_2 \ast \mathbb{Z}$. 

\subsection{The Gram matrices of four complex hyperbolic planes in $\mathbf{H}^3_{\mathbb C}$ for the group $G$} \label{subsec:gram}

Recall that for two $\mathbb C$-planes $\mathcal{P}$ and $\mathcal{P}'$  in $\mathbf{H}^3_{\mathbb C}$ with  polar vectors  $n$ and $n'$ such that
$\langle n, n\rangle =\langle n', n'\rangle =1$:
\begin{itemize}
	\item
	If  $\mathcal{P}$ and $\mathcal{P}'$  intersect in a $\mathbb{C}$-line in $\mathbf{H}^3_{\mathbb C}$, then the angle $\alpha $ between
	them has $|\langle n, n'\rangle|=\cos(\alpha)$;

	\item If  $\mathcal{P}$ and $\mathcal{P}'$ are hyper-parallel  in  $\mathbf{H}^3_{\mathbb C}$,
	then the distance $d$ between them has $|\langle n, n'\rangle |=\cosh \frac{d}{2}$;   	
	
	\item If  $\mathcal{P}$ and $\mathcal{P}'$ are asymptotic in  $\mathbf{H}^3_{\mathbb C}$,  then $|\langle n, n' \rangle|=1$.\end{itemize}

We consider  $\mathbb C$-planes $\mathcal{P}_{i}$ for $i=1,2,3,4$, so each $\mathcal{P}_{i}$ is a  totally geodesic ${\bf H}^2_{\mathbb C} \hookrightarrow {\bf H}^3_{\mathbb C}$. Let $n'_{i}$ be the polar vector of  $\mathcal{P}_{i}$ in $\mathbb{C}{\mathbf P}^{3}-\overline{\mathbf{H}^3_{\mathbb C}}$.
We assume
\begin{itemize}
	\item  the angle between $\mathcal{P}_{1}$ and $\mathcal{P}_{3}$ is $\frac{\pi}{2}$;
	\item the angle between $\mathcal{P}_{2}$ and $\mathcal{P}_{4}$ is $\frac{\pi}{2}$;
	
		\item the planes
		 $\mathcal{P}_{i}$ and $\mathcal{P}_{i+1}$  are   asymptotic  for $i=1,2,3,4$  mod $4$.
	\end{itemize}
Then we can normalize the Gram matrix of  $\{n'_{i}\}^4_{i=1}$
into  the following form
	$$\mathcal{G}'=(
	\langle  n'_{i},n'_{j}\rangle)_{1\leq i, j \leq 4}=\begin{pmatrix}
	1& 1& 0&\mathrm{e}^{-4\theta\mathrm{i}}\\
	1& 1 & 1& 0\\
	0&1&1 &1\\
	\mathrm{e}^{4\theta\mathrm{i}} & 0&1&1\\
	\end{pmatrix}.$$
	Where up to anti-holomorphic isometry of  ${\bf H}^3_{\mathbb C}$, we may assume $4 \theta \in [a,a+\pi]$ for any $a \in \mathbb{R}$, that is $\theta \in [\frac{a}{4}, \frac{a+ \pi}{4}]$.
	Moreover $4\theta= 2k \pi$  for $k \in \mathbb{Z}$ corresponds to the case of an infinite volume 3-dimensional real hyperbolic Coxeter tetrahedron, see \cite{VinbergS:1993}.
	By 	\cite{CunhaDGT:2012}, for  a Gram matrix above, there is a unique configuration of four
	 $\mathbb C$-planes $\mathcal{P}_{i}$ in ${\bf H}^3_{\mathbb C}$ for $i=1,2,3,4$  up to $\mathbf{PU}(3,1)$  realizing the Gram matrix.
	
	%the planes $\mathcal{P}_{2}$ and $\mathcal{P}_{3}$ are hyper-parallel with distance  $\operatorname{arccosh}( \frac{h}{2})$  if $h>1$
	
	In fact, we can re-normalize $n'_{i}$ above into $$n_1=n'_1,~~~~~n_2=\mathrm{e}^{-\theta\mathrm{i}}  \cdot n'_2,~~~~~n_3=\mathrm{e}^{-2\theta\mathrm{i}} \cdot n'_3,~~~~~n_4=\mathrm{e}^{-3\theta\mathrm{i}} \cdot n'_4,$$ 
	Then we can re-normalize the Gram matrix
	to  the following form
	\begin{equation}\label{matrix:Gram}\mathcal{G}=(
	\langle  n_{i},n_{j}\rangle)_{1\leq i, j \leq 4}=\begin{pmatrix}
	1& \mathrm{e}^{\theta\mathrm{i}}& 0&\mathrm{e}^{-\theta\mathrm{i}}\\
	\mathrm{e}^{-\theta\mathrm{i}}& 1 & \mathrm{e}^{\theta\mathrm{i}}& 0\\
	0&\mathrm{e}^{-\theta\mathrm{i}}&1 &\mathrm{e}^{\theta\mathrm{i}}\\
	\mathrm{e}^{\theta\mathrm{i}} & 0&\mathrm{e}^{-\theta\mathrm{i}}&1\\
	\end{pmatrix}.\end{equation}

	From now on we may assume that $\theta \in [\frac{3 \pi}{4}, \pi]$. It is easy to see $$\det(\mathcal{G})=-1-2 \cos(4 \theta),$$ and the eigenvalues of $\mathcal{G}$ are $$1 \pm 2\cos(\theta), ~~~1 \pm 2\sin(\theta).$$ 
	We have the followings:
	\begin{itemize} 
		
	\item   when $\theta = \pi$,
	all entries of $\mathcal{G}$ are reals, so	it degenerates to ${\bf H}^3_{\mathbb R}$-geometry;
	
			\item  when $\theta =\frac{5 \pi}{6}$, $\mathcal{G}$ has eigenvalues $$0, ~~~2, ~~~1 + \sqrt{3},~~~ 1- \sqrt{3},$$
		so it degenerates to ${\bf H}^2_{\mathbb C}$-geometry;

		\item  when $\theta \in (\frac{5 \pi}{6}, \pi]$, $\mathcal{G}$ has signature $(3,1)$, we have   ${\bf H}^3_{\mathbb C}$-geometry;

	\item when $\theta \in [\frac{3 \pi}{4},\frac{5 \pi}{6})$, $\mathcal{G}$ has signature $(2,2)$. We will not study them in this paper.
	
	\end{itemize}
	So our moduli space is
	\begin{equation}
	\mathscr{M}= \left[\frac{5 \pi}{6}, \pi \right].	\end{equation}
	
	% When $ \theta \in  [\frac{5 \pi}{6}, \pi ]$,  $det(G) \leq 0$. 

From the Gram matrix (\ref{matrix:Gram}), there is a $\mathbb{Z}_4$-symmetry  of the configurations of $\mathbb{C}$-planes above. Take $$J=\begin{pmatrix}
	-1& 0& 0&0\\
	0& \rm{i} & 0& 0\\
	0&0&-\rm{i} &0\\
	0 & 0&0&1\\
	\end{pmatrix}.$$
	Then $J^* HJ=H$, $J$ has order 4 with fixed point 
	\begin{equation}\label{p0}
	p_0=\left[\begin{matrix} 0, 0,0, 1 \end{matrix}\right]^{t}
	\end{equation} in ${\bf H}^3_{\mathbb C}$.
	Note that $\det(J)=-1$, so $J \in \mathbf{U}(3,1)$, but $J \notin \mathbf{PU}(3,1)$, and  $\rm{e}^{\frac{\pi\rm{i}}{4}}\cdot J \in \mathbf{PU}(3,1)$. But we can also use $J$ to study the $\mathbb{Z}_4$-symmetry for simplicity of notations.

	We take
	\begin{equation}\label{vector:n1}
{n_1}=\left[
\begin{array}{c}
\displaystyle{\frac{\sqrt{1-2\cos(\theta)} }{2}}\\ [3 ex]
\displaystyle{\frac{\sqrt{1-2\sin(\theta)} }{2}} \\ [3 ex]
\displaystyle{\frac{\sqrt{1+2\sin(\theta)} }{2}} \\ [3 ex]
\displaystyle{\frac{\sqrt{-1-2\cos(\theta)} }{2}}\\
\end{array}
\right]
\end{equation}
in $ {\mathbb C}{\mathbf P}^{3}- \overline{ {\bf H}^3_{\mathbb C}}$.
When $\theta \in [\frac{5 \pi}{6}, \pi]$, all entries of $n_1$ are non-negative reals. 
Let $n_{i}$  be $J^{i-1}(n_1)$ for $i=2,3,4$.  Then $$(
\langle  n_{i},n_{j}\rangle)_{1\leq i, j \leq 4}$$  is $\mathcal{G}$ in (\ref{matrix:Gram}). By 	\cite{CunhaDGT:2012}, there is a one-to-one correspondence between the Gram matrix and configurations $\mathbb{C}$-planes, so 	there is a $\mathbb{Z}_4$-symmetry of  the configurations $\mathbb{C}$-planes above.

\begin{remark} \label{remark:symmetry}The author notes that the $\mathbb{Z}_4$-symmetry above is lucky since it will simplify the calculations dramatically. The $\mathbb{Z}_4$-symmetry is also a little mysterious  since the author can not explain it from geometric point of view. 
\end{remark}

 	Now for any $\theta \in \mathcal{M}$, let $\rho_{\theta}:G \rightarrow  \mathbf{PU}(3,1)$  be the representation with $\rho_{\theta}(\iota_{i})=I_{i}$  the order two $\mathbb{C}$-reflection about $\mathcal{P}_{i}$.
 	We also denote by $$\Gamma=\Gamma_{\theta} =\langle I_1,I_2,I_3,I_4 \rangle.$$
 	\begin{itemize}
	\item  When $\theta= \frac{5 \pi}{6}$, $\Gamma$ preserves a totally geodesic ${\bf H}^2_{\mathbb C} \hookrightarrow{\bf H}^3_{\mathbb C}$ invariant, so we will view the representation as degenerating to a representation into  $\mathbf{PU}(2,1)$. But the discreteness and faithfulness of this representation are still non-trivial,  see  Section \ref{sec:Dirich2dim}  for more details.
	
\item When  $\theta =\pi$, $\Gamma$ preserves a totally geodesic ${\bf H}^3_{\mathbb R} \hookrightarrow{\bf H}^3_{\mathbb C}$ invariant.	We have a 3-dimensional real hyperbolic Coxeter tetrahedron, so both  the  discreteness and faithfulness of this  representation are trivial. The Dirichlet domain of this  group  representation in  ${\bf H}^3_{\mathbb R}$
 is not difficult, see Section  \ref{sec:Dirich3dimreal} for more details.
 
 \item  When we decrease $\theta$  from $\pi$ to $\frac{5 \pi}{6}$, we still have a representation of $G$ into   $\mathbf{PU}(3,1)$, but the discreteness of the  representation is highly non-trivial. This is what we will do in the paper.
 	\end{itemize}

			\subsection{Matrices  representations of the group $G$ into $\mathbf{PU}(3,1)$ } \label{subsec:matricesin3dim}
	For each  $\theta\in \mathcal{M}$, we now give the matrix presentation of $I_{i}$, the order-two complex reflection with mirror  $\mathcal{P}_{i}$  for $i=1, 2, 3, 4$.

%We denote by  $I_{i}$ the order-two complex reflection about the $\mathbb {C}$-plane $\mathcal{P}_{i}$ with polar vector $n_{i}$ for $i=1, 2, 3, 4$.

From   (\ref{equation:reflection}) and the vector $n_1$ in (\ref{vector:n1}), it is easy to see  
\begin{equation}\nonumber
I_1=\left(\begin{array}{cccc}
-\displaystyle{\frac{1+2 \cos(\theta)}{2}}&a_{12} &a_{13}&a_{14}\\ [ 5 pt]
a_{12}&-\displaystyle{\frac{1+2 \sin(\theta)}{2}} &a_{23} &a_{24}\\ [5 pt]
a_{13}&a_{23}&\displaystyle{\frac{-1+2 \sin(\theta)}{2}} &a_{34}\\ [5 pt]
-a_{14}&-a_{24}&-a_{34} &\displaystyle{\frac{-1+2 \cos(\theta)}{2}}\\ \end{array}\right), 
\end{equation}
where $$a_{12}=\frac{\sqrt{(1-2 \sin(\theta))\cdot (1-2 \cos(\theta))}}{2},$$ \\ [-2 ex] $$a_{13}=\frac{\sqrt{(1+2 \sin(\theta))\cdot (1-2 \cos(\theta))}}{2},$$\\ [-2 ex]
 $$a_{14}=-\frac{\sqrt{(2 \cos(\theta)-1)\cdot (1+2 \cos(\theta))}}{2},$$\\ [-2 ex]
 $$a_{23}=\frac{\sqrt{(1+2 \sin(\theta))\cdot(1-2 \sin(\theta))}}{2},$$\\ [-2 ex]
 $$a_{24}=-\frac{\sqrt{(2 \sin(\theta)-1)\cdot(1+2 \cos(\theta))}}{2},$$\\ [-2 ex]
and $$a_{34}=-\frac{\sqrt{(1+2 \sin(\theta))\cdot (-1-2 \cos(\theta))}}{2}.$$
Since $\theta \in [\frac{5 \pi}{6}, \pi]$, all the terms $a_{ij}$ above are real.

Then $I_1$ is the order two $\mathbb{C}$-reflection with mirror  $\mathcal{P}_{1}$.
Let  $I_{i}=J I_{i-1}J^{-1}$, then $I_i$ is the order two $\mathbb{C}$-reflection with mirror  $\mathcal{P}_{i}$  for $i=2,3,4$.

By direct calculation we have $$\det(I_1)=\det(I_2)=\det(I_3)=\det(I_4)=-1,$$  and $$I^*_{i} H I_{i}=H$$ for $i=1,2,3,4$, here  $I^*_{i}$ is the Hermitian transpose of $I_{i}$. Moreover  $$(I_1 I_3)^{2}=(I_2 I_4)^{2}=id,$$ and $I_1I_2$ is parabolic, so we get a representation $\rho$ of  $G$ into $\mathbf{PU}(3,1)$ (with additional condition that $I_{i}I_{i+1}$ is parabolic  for $i=1,2,3,4$ mod 4). Note that $\det(I_i)=-1$, so $I_i \in \mathbf{U}(3,1)$, but $I_i \notin \mathbf{PU}(3,1)$, and  ${\rm e}^{\frac{\pi\rm{i}}{4}}\cdot I_i \in \mathbf{PU}(3,1)$. But we can also use the matrices $I_i$ to study these representations for simplicity of notations.

%We have $I_1$ is
%\begin{equation}\nonumber
%\left(\begin{matrix}
%-\frac{1+2 \cos(\theta)}{2}&\frac{\sqrt{(1-2 \sin(\theta))(1-2 \cos(\theta))}}{2} &\frac{\sqrt{(1+2 \sin(\theta))(1-2 \cos(\theta))}}{2}&-\frac{\sqrt{(2 \cos(\theta)-1)(1+2 \cos(\theta))}}{2}\\
%\frac{\sqrt{(1-2 \sin(\theta))(1-2 \cos(\theta))}}{2}&-\frac{1+2 \sin(\theta)}{2} &\frac{\sqrt{(1+2 \sin(\theta))(1-2 \sin(\theta))}}{2} &-\frac{\sqrt{(2 \sin(\theta)-1)(1+2 \cos(\theta))}}{2}\\
%\frac{\sqrt{(1+2 \sin(\theta))(1-2 \cos(\theta))}}{2}&\frac{\sqrt{(1+2 \sin(\theta))(1-2 \sin(\theta))}}{2}&\frac{-1+2 \sin(\theta)}{2} &-\frac{\sqrt{(1+2 \sin(\theta))(-1-2 \cos(\theta))}}{2}\\
%\frac{\sqrt{(2 \cos(\theta)-1)(1+2 \cos(\theta))}}{2}&\frac{\sqrt{(2 \sin(\theta)-1)(1+2 \cos(\theta))}}{2}&\frac{\sqrt{(1+2 \sin(\theta))(-1-2 \cos(\theta))}}{2} &\frac{-1+2 \cos(\theta)}{2}\\ \end{matrix}\right), 
%\end{equation}

 In the following, we also denote $A_{i}=I_{i}I_{i+1}$  for $i=1,2,3,4$ mod  $4$.
Then $A_i$ is parabolic,  $A_1A_2=A_3A_4$ and $A_2A_3=A_4A_1$ have order 2.  Then $$\rho_{\theta}(K)=\langle A_1, A_2, A_3,A_4 \rangle$$ is an index two subgroup of $\Gamma=\rho_{\theta}(G)$.  We have $A_1A_2=I_1I_3$ is 	\begin{equation}\nonumber
\left(\begin{array}{cccc}
2 \cos(\theta)&0 &0&-\sqrt{4 \cos^2(\theta)-1}\\ [ 5 pt]
0&2 \sin(\theta) &-\sqrt{1-4 \sin^2(\theta)} &0\\ [ 5 pt]
0&-\sqrt{1-4 \sin^2(\theta)}&-2 \sin(\theta)&0\\ [ 5 pt]
-\sqrt{4 \cos^2(\theta)-1}&0&0&-2 \cos(\theta)\\ \end{array}\right).
\end{equation}
The matrix $A_1$ is a little complicated, we omit it, but it is easy to get it from the matrices of $I_1$ and $J$.
%			\begin{equation}\nonumber
%			A_1=\left(\begin{matrix}
%			????&2 &0&-2\\
%			0&1 &0 &-2\\
%			0&0&1 &0\\
%			0&0&0 &1\\ \end{matrix}\right),
%			\end{equation}

	%		 $C$ is
	%		$$\left(\begin{matrix}
	%		\frac{4h^2+4h^2\rm{e}^{t \rm{i}}+1}{4h^2}& -\frac{4h^2\rm{e}^{t \rm{i}}+1}{4h^2} & -\frac{(4h^2+\rm{e}^{-t \rm{i}} )\sqrt{4h^2 \cos(t)+3h^2+1}}{4h^3}&-\frac{16h^4+8h^2 \cos(t)+1}{8h^4}\\
	%		-\frac{1}{2} & -\frac{1}{2}& \frac{\rm{e}^{-t \rm{i}} \sqrt{4h^2 \cos(t)+3h^2+1}}{2h} &\frac{4h^2\rm{e}^{-t \rm{i}}+1}{4h^2}\\
	%		\frac{\rm{e}^{t \rm{i}} \sqrt{4h^2 \cos(t)+3h^2+1}}{2h}&-\frac{\rm{e}^{t \rm{i}} \sqrt{4h^2 \cos(t)+3h^2+1}}{2h} & -\frac{4h^2 \cos(t)+h^2+1}{2h^2}& -\frac{(4h^2+\rm{e}^{t \rm{i}}) \sqrt{4h^2 \cos(t)+3h^2+1}}{4h^3}\\
	%		-\frac{1}{2}&\frac{1}{2} &\frac{\rm{e}^{-t \rm{i}} \sqrt{4h^2 %\cos(t)+3h^2+1}}{2h} &\frac{4h^2+4h^2\rm{e}^{-t \rm{i}}+1}{4h^2}\\ %\end{matrix}\right).$$

\section{Dirichlet domain  of $\rho_{\frac{5 \pi}{6}}(K) <\mathbf{PU}(2,1)$ in  ${\bf H}^2_{\mathbb C}$} \label{sec:Dirich2dim}

In this section,  we will prove Theorem \ref{thm:complex3dim} for $\theta =\frac{5 \pi}{6}$  via the Dirichlet domain of $\rho_{\frac{5 \pi}{6}}(K)$ with center  $p_0$. We note that the proof of Theorem  \ref{thm:complex3dim} when $\theta\in (\frac{5 \pi}{6}, \pi]$ in Section  \ref{sec:complex3dim} does not  work for $\theta=\frac{5 \pi}{6}$, since (the lifts of) four points $$\left\{p_0, ~~~I_1I_2(p_0), ~~I_1I_3(p_0),~~ I_1I_4(p_0)\right\}$$ are linearly dependent when $\theta=\frac{5 \pi}{6}$ (see Lemma \ref{lemma:noncoplane}).
In spite of this, the  proof in this section is  a  model for the proof of Theorem \ref{thm:complex3dim} for general $\theta$ in Section  \ref{sec:complex3dim}. 

%. We note here 
%the reason we consider  $\Sigma$ instead of $\Gamma=\rho_{\frac{5 \pi}{6}}(G)$
%is by 	\cite{Goldmanparker:1992}, the Dirichlet domain of the infinite dihedral group acting on %complex hyperbolic space may have infinite facets when the base point is generic. 

%In this section, we prove   Theorem \ref{thm:complex3dim} when $\theta=\frac{5 \pi}{6}$.

%\begin{equation}\label{eq:2p2q12}
%{n_1}=\left[
%\begin{array}{c}
%\frac{\sqrt{1+\sqrt{3}}}{2} \\
%0 \\
%\frac{\sqrt{2}}{2} \\
%\frac{\sqrt{\sqrt{3}-1}}{2} \\
%\end{array}
%\right],
%\end{equation}

%\begin{equation}\label{eq:2p2q12}
%{n}=\left[
%\begin{array}{c}
%0 \\
%\sqrt{2} \\
%0 \\
%1 \\
%\end{array}
%\right],
%%\end{equation}

	\subsection{Matrices in $\mathbf{PU}(2,1)$}\label{subsec:matricesin2dim}
From (\ref{vector:n1}) we have 
\begin{equation}\label{n12dim}
n_1=\left[\begin{matrix} \displaystyle{\frac{\sqrt{1+\sqrt{3}}}{2}}, ~~~~0,~~~~\frac{\sqrt{2}}{2},~~~~ \frac{\sqrt{\sqrt{3}-1}}{2} \end{matrix}\right]^{t}
\end{equation}
when $\theta=\frac{5 \pi}{6}$.

So  the second entry  of each $n_{i}$
is zero for $i=1, 2, 3, 4$. We take
\begin{equation}\label{n2dim}
n=\left[\begin{matrix} 0, 1,0, 0 \end{matrix}\right]^{t},
\end{equation}
  in  $\mathbb C{\bf P}^3-\overline{{\bf H}^3_{\mathbb C}}$, then $\langle n, n_{i} \rangle =0$ for $i=1,2,3,4$.
The intersection of  ${\bf H}^3_{\mathbb C}$  and   the dual of $n$ in  $\mathbb C{\bf P}^3$ with respect to the Hermitian form    is a copy of  ${\bf H}^2_{\mathbb C}$,  which is $$\mathcal{P}=\left\{	\left[
\begin{array}{c}
z_1 \\
0\\
z_3 \\
1 \\
\end{array} 
\right] \in {\bf H}^3_{\mathbb C}\right\}.$$
Each $I_{i}$ preserves this ${\bf H}^2_{\mathbb C}\hookrightarrow {\bf H}^{3}_{\mathbb C}$ invariant.
So we delete the second column and the second row of the matrix  $I_{i}$ in Subsection  \ref{subsec:matricesin3dim}, we get new matrices in $\mathbf{U}(2,1)$. But we still denote them by $I_{i}$ for the simplification of notations. 
We have
\begin{equation}\nonumber
I_1=\left(\begin{array}{ccc}
\displaystyle{\frac{\sqrt{3}-1}{2}}&	\displaystyle{\frac{1}{\sqrt{\sqrt{3}-1}}} &-\displaystyle{\frac{\sqrt{2}}{2}}\\ [12 pt]
\displaystyle{\frac{1}{\sqrt{\sqrt{3}-1}}}&0  &-\displaystyle{\frac{1}{\sqrt{\sqrt{3}+1}}}\\ [16 pt]
\displaystyle{\frac{\sqrt{2}}{2}}&\displaystyle{\frac{1}{\sqrt{\sqrt{3}+1}}} &-\displaystyle{\frac{\sqrt{3}+1}{2}}\\ \end{array}\right),
\end{equation}

\begin{equation}\nonumber
J=\left(\begin{matrix}
-1&	0 &0\\
0&-\rm{i}  &0\\

0&0 &1\\ \end{matrix}\right),
\end{equation}
and 
\begin{equation}\nonumber
A_1=I_{1}I_2=\left(\begin{array}{ccc}
\displaystyle{\frac{3+ \rm{i}+\sqrt{3}\rm{i}-\sqrt{3}}{2}}& -\rm{i}\displaystyle{\sqrt{\sqrt{3}-1}} & \displaystyle{\frac{\sqrt{2}(\sqrt{3}+\rm{i})}{2}}\\  [10 pt]
\displaystyle{\sqrt{\sqrt{3}-1}}&-\sqrt{3} \rm{i}  &\displaystyle{\sqrt{\sqrt{3}+1}}\\   [10 pt]
\displaystyle{\frac{\sqrt{2}(\sqrt{3}+\rm{i})}{2}}&-\rm{i}\displaystyle{\sqrt{\sqrt{3}+1}} &\displaystyle{\frac{3-\rm{i}+\sqrt{3}\rm{i}+\sqrt{3}}{2}}\\ \end{array}\right),
\end{equation}
$A_{i}=JA_{i-1}J^{-1}$ for $i=2,3,4$ and 
\begin{equation}\nonumber
 A_1A_2= I_{1}I_3=\left(\begin{array}{ccc}
-\sqrt{3}&0 &\sqrt{2}\\[2 pt] 
0&-1 &0\\[2 pt]
-\sqrt{2}&0 &\sqrt{3}\\ \end{array}\right).
\end{equation}

In fact the above $I_i$ for $i=1,2,3,4$ and $J$ are in  $\mathbf{U}(2,1)$, but not in  $\mathbf{PU}(2,1)$ since the determinants of them are not 1.   But we can also use these matrices to study the group action on  ${\bf H}^2_{\mathbb C}$ for simplicity of notations.

%\begin{equation}\nonumber
%A_1=I_{1}I_2=\left(\begin{matrix}
%\frac{3+ \rm{i}+\sqrt{3}\rm{i}-\sqrt{3}}{2}&-\rm{i}\sqrt{\sqrt{3}-1} &\frac{\sqrt{2}(\sqrt{3}+\rm{i})}{2}\\  
%\sqrt{\sqrt{3}-1}&-\sqrt{3} \rm{i}  &\sqrt{\sqrt{3}+1}\\  

%\frac{\sqrt{2}(\sqrt{3}+\rm{i})}{2}&-\rm{i}\sqrt{\sqrt{3}+1} %&\frac{3-\rm{i}+\sqrt{3}\rm{i}+\sqrt{3}}{2}\\ \end{matrix}\right),
%\end{equation}
%\begin{equation}\nonumber
%I_1=\left(\begin{matrix}
%\frac{\sqrt{3}-1}{2}&	\frac{\sqrt{2}\sqrt{\sqrt{3}+1}}{2} &-\frac{\sqrt{2}}{2}\\
%\frac{\sqrt{2}\sqrt{\sqrt{3}+1}}{2}&0  &-\frac{\sqrt{2}\sqrt{\sqrt{3}-1}}{2}\\

%\frac{\sqrt{2}}{2}&\frac{\sqrt{2}\sqrt{\sqrt{3}-1}}{2} &-\frac{\sqrt{3}+1}{2}\\ \end{matrix}\right),
%\end{equation}

\subsection{A partial  Dirichlet domain $D_{R}$ of $\rho_{\frac{5 \pi}{6}}(K) <\mathbf{PU}(2,1)$ in ${\bf H}^2_{\mathbb C}$}\label{subsec:2dimDirichlet}

In  this subsection, we will define a subset $R \subset \rho_{\frac{5 \pi}{6}}(K)$, 
from which we have the partial  the partial  Dirichlet domain $D_{R}$  in ${\bf H}^2_{\mathbb C}$.

 %boundary of the  Dirichlet domain of $\Sigma$ based at $Fix(J)$ consists of part of the isometric sphere of $I(g)$ for all $g \in R$. 
 %Let $B_{12}$ be the bisecter with respect to the two points $p_{0}$ and $I_1I_2(p_0)$, 
 
 First note that $I_1I_2$ is parabolic with fixed point 
 $$q_{12}=[\displaystyle{\frac{-1+\rm{i}}{\sqrt{2}\sqrt{\sqrt{3}+1}}},~~1,~~\frac{1+\rm{i}}{\sqrt{2}\sqrt{\sqrt{3}-1}}]^{t}$$ in $\partial {\bf H}^2_{\mathbb C}$.
 We denote by $p_{ij}=I_iI_j(p_0) \in {\bf H}^2_{\mathbb C}$ and $B_{ij}=B(p_0, p_{ij})$  the bisector with respect to the two points $p_0$ and $p_{ij}$ for certain $i,j \in \{1,2,3,4\}$.
Now \begin{itemize}
	\item $p_{0}=[0,0,1]^{t}$; \\ [-0.0 ex]
	
		\item $p_{12}=[\displaystyle{\frac{\sqrt{2}(\sqrt{3}+\rm{i})}{2}},~~~\sqrt{\sqrt{3}+1},~~~\frac{3-\rm{i}+\sqrt{3}+\sqrt{3}\rm{i}}{2}]^{t}$; \\ [-0.0 ex]
		
			\item 
		$p_{21}=[-\displaystyle{\frac{\sqrt{2}(\sqrt{3}-\rm{i})}{2}},~~~-\rm{i}\sqrt{\sqrt{3}+1},~~~\frac{3+\rm{i}+\sqrt{3}-\sqrt{3}\rm{i}}{2}]^{t}$; \\ [-0.0 ex]
		\item 	$p_{23}=[-\displaystyle{\frac{\sqrt{2}(\sqrt{3}+\rm{i})}{2}},~~~-\rm{i}\sqrt{\sqrt{3}+1},~~~\frac{3-\rm{i}+\sqrt{3}+\sqrt{3}\rm{i}}{2}]^{t}$; \\ [-0.0 ex]
		\item	$p_{32}=[\displaystyle{\frac{\sqrt{2}(\sqrt{3}-\rm{i})}{2}},~~~-\sqrt{\sqrt{3}+1},~~~\frac{3+\rm{i}+\sqrt{3}-\sqrt{3}\rm{i}}{2}]^{t}$; \\ [-0.0 ex]
		\item	$p_{34}=[\displaystyle{\frac{\sqrt{2}(\sqrt{3}+\rm{i})}{2}},~~~-\sqrt{\sqrt{3}+1},~~~\frac{3-\rm{i}+\sqrt{3}+\sqrt{3}\rm{i}}{2}]^{t}$; \\ [-0.0 ex]
		\item	$p_{43}=[-\displaystyle{\frac{\sqrt{2}(\sqrt{3}-\rm{i})}{2}},~~~\rm{i}\sqrt{\sqrt{3}+1},~~~\frac{3+\rm{i}+\sqrt{3}-\sqrt{3}\rm{i}}{2}]^{t}$; \\ [-0.0 ex]
		\item	$p_{41}=[-\displaystyle{\frac{\sqrt{2}(\sqrt{3}+\rm{i})}{2}},~~~\rm{i}\sqrt{\sqrt{3}+1},~~~\frac{3-\rm{i}+\sqrt{3}+\sqrt{3}\rm{i}}{2}]^{t}$; \\ [-0.0 ex]
		\item	$p_{14}=[\displaystyle{\frac{\sqrt{2}(\sqrt{3}-\rm{i})}{2}},~~~\sqrt{\sqrt{3}+1},~~~\frac{3+\rm{i}+\sqrt{3}-\sqrt{3}\rm{i}}{2}]^{t}$; \\ [-0.0 ex]
		
		\item	$p_{13}=[\sqrt{2},0,\sqrt{3}]^{t}$; \\ [-0.0 ex]
		
		\item	$p_{24}=[-\sqrt{2},0,\sqrt{3}]^{t}$.

	\end{itemize}

We fix lifts $\bf{p_0}$ and $\bf{p_{ij}}$ of $p_0$ and $p_{ij}$ as vectors in $\mathbb{C}^{2,1}$,  such that the entries of  $\bf{p_0}$ and $\bf{p_{ij}}$ are just the same as  entries of $p_0$ and $p_{ij}$ above.  We have $$\langle {\bf p_0}, {\bf p_0} \rangle =\langle {\bf p_{ij}},{\bf p_{ij}} \rangle=-1$$
	for ${\bf p_{ij}}$ above. All of these lifts have the same norm will be very convenient  later. 
%?????
%$B_{12}$ is given by all $[x,y,z,1]^{t}$ with the conditions  $$|-\frac{1}{2}\cdot x-\frac{1}{2}\cdot y+\frac{\sqrt{3}}{2}\cdot z-\frac{3}{2} \cdot 1| = |\frac{3}{2}\cdot x-\frac{3}{2}\cdot y+\frac{\sqrt{3}}{2}\cdot z-\frac{5}{2}\cdot 1|$$
%and $$x^2+y^2+z^2 <1.$$

%\begin{defn}
	
%	For $\Sigma=\rho_{\frac{5 \pi}{6}}(K) < \mathbf{PU}(2,1)$,  the partial Ford domain $D_{R}$ of $\Sigma$  in $ {\bf H}^2_{\mathbb C}$
%	is the intersection of the exteriors of
%	all  isometric spheres of elements in	$R$,
%	that is $$D_{R}=\{p \in  \overline{ {\bf H}^2_{\mathbb{C}} } : |\langle p, q_{\infty} %\rangle | \leq  |\langle p, g^{-1}(q_{\infty})|\}_{g \in R}.$$
%\end{defn}

%We will show $D_{R}$ is in fact the Dirichlet domain of $\Sigma$, 

Recall that $A_{i}=I_{i}I_{i+1}$ for $i=1,2,3,4$ mod 4.   Let  $R \subset \rho_{\frac{5 \pi}{6}}(K)$  be the set of ten words in $\Gamma$:
$$\left\{ (I_1I_2)^{\pm 1},~~~(I_2I_3)^{\pm 1},~~~(I_3I_4)^{\pm 1},~~~(I_4I_1)^{\pm 1},~~~ I_1I_3,~~~ I_2I_4 \right\}.$$
We will show the partial  Dirichlet domain $D_{R}$ centered at the fixed point $p_0$ of $J$  is in fact the  Dirichlet domain of $\rho_{\frac{5 \pi}{6}}(K)$. The main tool for our study is the Poincar\'e polyhedron theorem, which gives
sufficient conditions for $D_{R}$
to be a fundamental domain of the group. We refer to \cite{ParkerWill:2017} for the precise statement of this
version of Poincar\'e polyhedron theorem we need.
The  main technical result in this section is 
\begin{thm} \label{thm:2dimdirichlet}
	$D_{R}$ is the Dirichlet  domain of  $\rho_{\frac{5 \pi}{6}}(K)$ acting on   ${\bf H}^{2}_{\mathbb C}$ with center $p_0$. Moreover,
	the group $\rho_{\frac{5 \pi}{6}}(K) = \langle A_1, A_2,A_3,A_4 \rangle$ has a presentation  $$\langle A_1, A_2,A_3,A_4: A_1A_2A_3A_4=id,(A_1A_2)^2=(A_2A_3)^2=(A_3A_4)^2=(A_4A_1)^2=id\rangle.$$
So $\rho_{\frac{5 \pi}{6}}:K \rightarrow \mathbf{PU}(2,1)$	is a discrete and faithful presentation of $K$.
	
\end{thm}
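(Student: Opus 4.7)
The plan is to apply the Poincar\'e polyhedron theorem to $D_R$. The ten elements of $R$ themselves serve as side-pairing maps: each parabolic $A_i^{\pm 1}$ pairs the bisector side $s(A_i^{\mp 1})$ with $s(A_i^{\pm 1})$ (since any isometry $g$ sends $B(p_0, g^{-1}(p_0))$ to $B(p_0, g(p_0))$), while the involutions $A_1A_2 = I_1I_3$ and $A_2A_3 = I_2I_4$ each pair their unique side with itself by a half-turn. The $\mathbb{Z}_4$-symmetry generated by the regular elliptic $J$ fixing $p_0$ permutes the ten sides cyclically, so most verifications reduce to a single representative case per orbit.

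The first concrete step is to determine the combinatorial structure of $\partial D_R$: which pairs of bisectors $B(g_i), B(g_j)$ meet, and what the resulting Giraud disks look like. I would use the spinal-coordinate parametrization from Subsection 2.5 and the explicit lifts $\mathbf{p_{ij}}$ (already normalized so $\langle\mathbf{p_{ij}},\mathbf{p_{ij}}\rangle = -1$) to write each double intersection as the zero set of a single modulus equation on $\mathbb{S}^1\times\mathbb{S}^1$, which reduces to a quadratic in one angular variable and can be solved explicitly. The output of this step is a combinatorial model matching Figure \ref{figure:22infty2dimdirichletabstract}, showing that none of the ten bisectors is redundant and that each ridge is either bounded or limits to one of the parabolic cusps $q_{ij}$.

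The heart of the argument is the ridge-cycle verification. Around each codimension-two ridge, the side-pairing maps must compose to an element of the ridge stabilizer whose order matches the dihedral angle, and the local copies of $D_R$ must tile a neighborhood. The expected cycle relations are exactly the ones appearing in the claimed presentation: the relation $A_1A_2A_3A_4 = \mathrm{id}$ arises at the ridges incident to the $J$-orbit, while the relations $(A_iA_{i+1})^2 = \mathrm{id}$ come from the ridges stabilized by $I_1I_3$ and $I_2I_4$. Each cycle check is a finite matrix computation, drastically simplified by the $\mathbb{Z}_4$-symmetry, so in practice one verifies only the cycles around a single orbit-representative ridge of each type.

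Finally, one must verify completeness of the quotient, which in this setting reduces to a horocyclic consistency condition at each parabolic cusp $q_{ij}$: a sufficiently small horoball neighborhood of $q_{ij}$ inside $D_R$ must be a fundamental domain for the cyclic stabilizer generated by the parabolic fixing $q_{ij}$. Given all of this, the Poincar\'e polyhedron theorem simultaneously yields that $D_R$ is a fundamental domain, that $\rho_{\frac{5\pi}{6}}(K)$ is discrete, and that it admits the stated presentation (which forces faithfulness, since $K$ itself has the same presentation). I expect the main obstacle to be the second step: confirming the precise combinatorial structure of $D_R$ without the luxury of totally geodesic sides, particularly because $\theta = \frac{5\pi}{6}$ sits exactly at the boundary of the $\mathbb{C}$-hyperbolic regime, which is precisely the degeneracy forcing the exclusion of this value from the $\mathbf{H}^3_{\mathbb{C}}$-argument of Section \ref{sec:complex3dim} (the linear dependence recorded in Lemma \ref{lemma:noncoplane}). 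One must therefore handle carefully the possibility that two a priori distinct ridges coincide or degenerate at this endpoint value of $\theta$.
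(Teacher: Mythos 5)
Your proposal follows essentially the same route as the paper: compute the double intersections of the ten bisectors in spinal coordinates to pin down the combinatorics of $\partial D_{R}$ (exploiting the $\mathbb{Z}_4$-symmetry of $J$), take the elements of $R$ as side-pairing maps with $I_1I_3$ and $I_2I_4$ pairing their sides to themselves, verify the ridge cycles yielding $A_1A_2A_3A_4=id$ together with the involution relations, check completeness at the parabolic fixed points, and invoke the Poincar\'e polyhedron theorem. The only cosmetic differences are that the paper obtains $(A_1A_2)^2=(A_2A_3)^2=id$ from the self-paired sides $s_{13}$ and $s_{24}$ rather than from ridge cycles, and that the one delicate configuration you anticipate in your last step is precisely the nonempty Giraud disk $B_{12}\cap B_{34}$, which the paper disposes of by showing it lies entirely on the far side of $B_{13}$ from $p_0$.
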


%We shall use a version of the Poincar\'e polyhedron theorem for coset
%decompositions rather than for groups, because $D_{R}$
%is stabilized by the cyclic
%subgroup generated by $A$. 

\subsection{Intersection patterns of the bisectors for $D_{R}$}\label{subsec:2dimintersection}
In this subsection, we will study the information on intersection patterns of the bisectors for $D_{R}$.  We summarize  the intersections of them  in Table \ref{table:intersecton} and we will show this carefully. Moreover,  Table \ref{table:intersecton} should be compared with Figures 		\ref{figure:22infty2dimdirichlet} and \ref{figure:22infty2dimdirichletabstract}.
Where Figure \ref{figure:22infty2dimdirichlet}	is  a realistic view of the boundary of  the partial  Dirichlet domain $D_{R}$ of $\rho_{\frac{5 \pi}{6}}(K) <\mathbf{PU}(2,1)$. For example,  the sphere labeled by  $B_{41}$ is in fact the spinal sphere $B_{41} \cap \partial {\bf H}^{2}_{\mathbb C}$.  Unfortunately, since the union of these spinal spheres are   ``twisted"  in $\partial {\bf H}^{2}_{\mathbb C}$, it seems impossible to see all the spinal spheres from  only one point of view. 
 Figure \ref{figure:22infty2dimdirichletabstract}  is an abstract picture of the boundary of the partial   Dirichlet domain $D_{R}$, which is much more transparent.

%\begin{table}[htbp]
%	\caption{The intersections of bisectors we  should be   concerned about up to $J$-action.}
%	\centering
%	\begin{tabular}{c}
%		\hline
%	%	Intersection   \\  [2 ex]
		%heading
	%	\hline
%		 $B_{12}\cap B_{21}$     \\  [2 ex]
		 
%		 $B_{12}\cap B_{23}$     \\  [2 ex]
		 
%		 $B_{12}\cap B_{32}$     \\  [2 ex]
%		 
%		 $B_{12}\cap B_{34}$     \\  [2 ex]
		 
%		 $B_{12}\cap B_{43}$     \\  [2 ex]
%		 $B_{12}\cap B_{41}$     \\  [2 ex]
		 
%		 $B_{12}\cap B_{14}$     \\  [2 ex]
%		$B_{12}\cap B_{13}$  
%		\\ [2 ex]
%	$B_{12}\cap B_{24}$     \\  [2 ex]
%		$B_{13}\cap B_{24}$     
%		 \\  
%		\hline
%	\end{tabular}
%	\label{table:intersecton}
%\end{table}

\begin{table}[H]
		\centering
	\begin{tabular}{|c|c|}
		\hline
		
	 $B_{12}\cap B_{21}$,  tangent  &  $B_{12}\cap B_{41}=\emptyset$  	 \\  [2 ex]

	$B_{12}\cap B_{23}=\emptyset$   & $B_{12}\cap B_{14}\neq \emptyset$  
	 \\  [2 ex]
	 
	 	$B_{12}\cap B_{32}=\emptyset$   & $B_{12}\cap B_{13}\neq \emptyset$  
	 \\  [2 ex]
	 
	 	$B_{12}\cap B_{34}\neq \emptyset$   & $B_{12}\cap B_{24}=\emptyset$  
	 \\  [2 ex]
	 
	 	$B_{12}\cap B_{43}=\emptyset$   & $B_{13}\cap B_{24}= \emptyset$  
	 \\  
		\hline	
	\end{tabular}
	\caption{The intersections of bisectors we  should be   concerned with up to $J$-action.}
	\label{table:intersecton}
\end{table}

We first  consider the intersections of the bisector $B_{12}$ with other bisectors.
\begin{prop}\label{prop:B12} For the bisector $B_{12}$ of $I_1I_2$, we have
	
	\begin{enumerate}
			\item  \label{item:B12andB21}$B_{12}$ is tangent to   $B_{21}$;
			
		\item  \label{item:B12empty}$B_{12}$ does not intersect  $B_{23}$, $B_{32}$, $B_{43}$, $B_{41}$ and $B_{24}$;
		
			\item  \label{item:B12nonempty} $B_{12}$ intersects $B_{34}$ in a non-empty Giraud disk  $B_{12}\cap B_{34}$. Moreover,  the disk $B_{12}\cap B_{34}$ lies in the component of  ${\bf H}^2_{\mathbb{C}}-B_{13}$  which does not contain the  point $p_0$. In particular,  $B_{12}\cap B_{34}$ does not lie in the partial Dirichlet domain $D_{R}$.
  	 \end{enumerate}
  	 
		\end{prop}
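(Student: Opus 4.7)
The plan is to use the spinal-coordinate machinery of Subsection 2.5. Since the lifts $\mathbf{p_0}$ and $\mathbf{p_{ij}}$ listed in Subsection 4.2 all have norm $-1$, for each pair of cofocal bisectors $B(p_0,p_{ij})$ and $B(p_0,p_{kl})$ with $\{\mathbf{p_0},\mathbf{p_{ij}},\mathbf{p_{kl}}\}$ linearly independent, the potential Giraud disk is parametrized by
\[
V(z_1,z_2)=\mathbf{p_{ij}}\boxtimes \mathbf{p_{kl}}+z_1\,\mathbf{p_{kl}}\boxtimes \mathbf{p_0}+z_2\,\mathbf{p_0}\boxtimes \mathbf{p_{ij}}, \qquad (z_1,z_2)\in \mathbb{S}^1\times \mathbb{S}^1,
\]
with the actual intersection being the open region on which $\langle V,V\rangle<0$. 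Using the explicit matrices of Subsection 4.1, the real-valued function $\langle V,V\rangle$ becomes, after clearing a common positive scalar, a trigonometric polynomial in $t_1,t_2$ (with $z_j=e^{\mathrm{i}t_j}$) whose coefficients lie in $\mathbb{Q}[\sqrt{3}]$. The whole proposition reduces to sign analysis of such polynomials.

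For part (1), I would exploit the fact that $I_1I_2$ is parabolic with unique ideal fixed point $q_{12}$. Both $p_{12}=I_1I_2(p_0)$ and $p_{21}=(I_1I_2)^{-1}(p_0)$ are iterates of $p_0$ under this parabolic, so the closures of $B_{12}$ and $B_{21}$ in $\overline{\mathbf{H}^2_{\mathbb C}}$ both contain $q_{12}$. A direct computation of $\langle V,V\rangle$ for the pair $(\mathbf{p_{12}},\mathbf{p_{21}})$ will show it is nonnegative on the torus with a single zero at the point corresponding to $q_{12}$, giving tangency. For part (2), the $\mathbb{Z}_4$-symmetry $I_{i+1}=JI_iJ^{-1}$ identifies several pairs: $B_{12}\cap B_{41}$ is $J$-conjugate to $B_{23}\cap B_{12}$, $B_{12}\cap B_{32}$ to $B_{41}\cap B_{43}$, and so on, so only a few $\langle V,V\rangle$ need to be checked to be strictly positive on $\mathbb{S}^1\times \mathbb{S}^1$. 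For each remaining case I would write out the trigonometric polynomial explicitly and certify positivity either by a sum-of-squares decomposition or by locating the critical points of $\langle V,V\rangle$ via $\partial_{t_1}\langle V,V\rangle=\partial_{t_2}\langle V,V\rangle=0$ and verifying that the minimum value is strictly positive.

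For part (3) the same parametrization applied to $(\mathbf{p_{12}},\mathbf{p_{34}})$ produces a genuine open region where $\langle V,V\rangle<0$, so $B_{12}\cap B_{34}$ is a nonempty Giraud disk. To show this disk is expelled from $D_R$ by the bisector $B_{13}$, I would form
\[
\Phi(z_1,z_2)=|\langle V(z_1,z_2),\mathbf{p_{13}}\rangle|^2-|\langle V(z_1,z_2),\mathbf{p_0}\rangle|^2
\]
on the Giraud region. Because $I_1I_3=A_1A_2$ has order two and acts by the explicit diagonal matrix from Subsection 4.1, $\Phi$ simplifies enough that one can show it is strictly negative throughout the region, which places every point of $B_{12}\cap B_{34}$ strictly closer to $p_{13}$ than to $p_0$ and hence on the far side of $B_{13}$ from the center. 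The main obstacle is precisely this kind of global sign control on the torus: numerics render all conclusions obvious, but a rigorous argument requires producing an SOS-style certificate or a complete enumeration of critical points of the relevant trigonometric polynomials, a task that must be performed for each orbit class of pairs that survives the $\mathbb{Z}_4$-symmetry reduction.
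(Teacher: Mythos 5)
Your plan for parts (2) and (3) is essentially the paper's own argument: parametrize each pairwise intersection by spinal coordinates $V(z_1,z_2)$ on the Giraud torus, reduce emptiness to strict positivity of a trigonometric polynomial $\langle V,V\rangle$ in $(t_1,t_2)$, use the $\mathbb{Z}_4$-symmetry to cut down the number of cases (the paper derives $B_{12}\cap B_{41}=\emptyset$ from $B_{12}\cap B_{23}=\emptyset$ exactly as you do), and for part (3) compare $|\langle V,\mathbf{p_{13}}\rangle|^2$ with the constant $|\langle V,\mathbf{p_0}\rangle|^2$ to expel $B_{12}\cap B_{34}$ from $D_R$. The one structural difference there is that the paper only certifies the sign of your $\Phi$ on the boundary circle $\langle V,V\rangle=0$ and then invokes the standard connectivity property of bisector traces on Giraud disks plus a single sample point, whereas you propose to control $\Phi$ on the whole region; both work, and the paper's version asks slightly less of the sign analysis. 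For part (1) you take a genuinely different route: the paper does no torus computation at all, but observes that $I_1I_2$ is unipotent and quotes Phillips's theorem that the Dirichlet domain of a cyclic unipotent group has exactly the two sides $B_{12}$, $B_{21}$ meeting only at the fixed point $q_{12}$; your direct verification that $\langle V,V\rangle\ge 0$ with a single zero would also prove tangency (and the three lifts are indeed linearly independent, so the parametrization applies), but it is more work for the same conclusion. Two caveats: your sample symmetry identification is off --- since $J(B_{ij})=B_{i+1,j+1}$, one has $J(B_{12}\cap B_{32})=B_{23}\cap B_{43}$, not a conjugate of $B_{41}\cap B_{43}$, so the pairs $(12,32)$, $(12,43)$, $(12,24)$ each still require a direct check (which your plan accommodates); and your closing remark about needing an SOS certificate or a complete critical-point enumeration is well taken, since the paper itself settles these minima only numerically with Maple, so on this point your proposal is, if anything, more demanding than the published argument.
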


\begin{proof} The trace of $I_1I_2$ is $3$ (when we normalize them such that $\det(I_1I_1)=1$), so $I_1I_2$ is unipotent. By Theorem 6.1 of \cite{Phillips:1992}, the Dirichlet domain (with center $p_{0}$) of the infinite cyclic group  $\langle I_1I_2 \rangle $ has two sides $B_{12}$ and $B_{21}$. These two bisectors intersect exactly in the fixed point $q_{12}$ of $I_1I_2$.  This proves (\ref{item:B12andB21}) of Proposition \ref{prop:B12}.

We now  consider $B_{12}\cap B_{23}$. It is easy to see  $\bf{p_{12}}$,  $\bf{p_{23}}$ and $\bf{ p_{0}}$ are linearly independent in $\mathbb{C}^{2,1}$. 
In Equation (\ref{equaation:girauddisk2dim}), we take ${\bf q}=\bf{p_{12}}$,  ${\bf r}=\bf{p_{23}}$ and ${\bf p}=\bf{ p_{0}}$, then we can parameterize the intersection $B_{12}\cap B_{23}$  of the bisectors $B_{12}$ and $B_{23}$  by $V=V(z_1,z_2)$ with $\langle V,V \rangle <0$.
Where
	\begin{equation}\label{para:VB12capB23}
V=\left(
\begin{array}{c}
-\displaystyle{\frac{(\rm{i}\rm{e}^{r \rm{i}}-\rm{e}^{s \rm{i}}+2-2 \rm{i})\sqrt{3}+\rm{i}\rm{e}^{r \rm{i}}-\rm{i}\rm{e}^{s\rm{i}}+2-4\rm{i}}{\sqrt{\sqrt{3}+1}}} \\   [4 ex]
	-\displaystyle{\frac{(\rm{e}^{r \rm{i}}+\rm{e}^{s \rm{i}}-2)\sqrt{3}-\rm{i}\rm{e}^{r \rm{i}}-\rm{i}\rm{e}^{s \rm{i}}-4+6\rm{i}}{\sqrt{2}}} \\   [4 ex]
\displaystyle{\frac{1-\rm{i}+(1+\rm{i})\sqrt{3}}{\sqrt{\sqrt{3}-1}}} \\
\end{array}
\right),
\end{equation}	
and $(z_1,z_2)=(\rm{e}^{r \rm{i}},\rm{e}^{s \rm{i}}) \in \mathbb{S}^1 \times \mathbb{S}^1$.
Now $\langle V,V \rangle = V^{*} \cdot H \cdot  V$ is
	\begin{flalign} \label{para:B12andB23} &
(8\sqrt{3}+4)\sin(s)-(8\sqrt{3}+20)\cos(r)-(8\sqrt{3}+16)\cos(s)
& \\ &+(2\sqrt{3}+2)\sin(r-s)+4\cos(r-s)-8\sin(r)+16\sqrt{3}+48.& \nonumber \end{flalign}
With Maple, the minimum  of (\ref{para:B12andB23}) is 12.752 numerically. In particular, any $V$ in (\ref{para:VB12capB23}) is a positive vector. So $B_{12}\cap B_{23}=\emptyset$.

We then consider $B_{12}\cap B_{32}$. 
In Equation (\ref{equaation:girauddisk2dim}), we take ${\bf q}=\bf{p_{12}}$,  ${\bf r}=\bf{p_{32}}$ and ${\bf p}=\bf{p_{0}}$, then we can parameterize the intersection  $B_{12}\cap B_{32}$ of the bisectors $B_{12}$ and $B_{32}$  by $V=V(z_1,z_2)$ with $\langle V,V \rangle <0$.
Where
\begin{equation}\label{para:VB12capB32}
V=\left(
\begin{array}{c}
\displaystyle{\frac{(\rm{e}^{r \rm{i}}+\rm{e}^{s \rm{i}}-4)\sqrt{3}+\rm{e}^{r \rm{i}}+\rm{e}^{s\rm{i}}-6}{\sqrt{\sqrt{3}+1}} }\\  [4 ex]
\displaystyle{\frac{(\rm{e}^{r \rm{i}}-\rm{e}^{s \rm{i}}-2\rm{i})\sqrt{3}+\rm{i}\rm{e}^{r \rm{i}}+\rm{e}^{s \rm{i}}}{\sqrt{2}}} \\ [4 ex]
-\sqrt{6}{\sqrt{\sqrt{3}+1}} \\
\end{array}
\right),
\end{equation}	
and $(z_1,z_2)=(\rm{e}^{r \rm{i}},\rm{e}^{s \rm{i}}) \in \mathbb{S}^1\times \mathbb{S}^1$.
Now $\langle V,V \rangle = V^{*} \cdot H \cdot  V$ is
\begin{flalign} \label{item:B12andB32} &
-(10\sqrt{3}+12)\cdot (\cos(r)+\cos(s))+2\sqrt{3}\cdot (\sin(r-s)+\cos(r-s))
& \\ &+14\sqrt{3}-6\sin(r)+6\sin(s)+36.& \nonumber \end{flalign}
With Maple, the minimum  of (\ref{item:B12andB32}) is 4.78 numerically. In particular, any $V$ in (\ref{para:VB12capB32}) is a positive vector. So $B_{12}\cap B_{32}=\emptyset$.

For $B_{12}\cap B_{43}$, in Equation (\ref{equaation:girauddisk2dim}), we take ${\bf q}=\bf{p_{12}}$,  ${\bf r}=\bf{p_{43}}$ and ${\bf p}=\bf{p_{0}}$. Then we can parameterize the intersection  $B_{12}\cap B_{43}$   of the bisectors $B_{12}$ and $B_{43}$  by $V=V(z_1,z_2)$ with $\langle V,V \rangle <0$.
Where
\begin{equation}\label{para:VB12capB43}
V=\left(
\begin{array}{c}
\displaystyle{\frac{(\rm{e}^{s \rm{i}}+\rm{i}\rm{e}^{r \rm{i}}-2-2\rm{i})\sqrt{3}+\rm{i}\rm{e}^{r \rm{i}}+\rm{e}^{s\rm{i}}-4-4\rm{i}}{\sqrt{\sqrt{3}+1}}} \\  [4 ex] 
-\displaystyle{\frac{(\rm{e}^{r \rm{i}}+\rm{e}^{s \rm{i}}-4)\sqrt{3}+\rm{i}\rm{e}^{r \rm{i}}-\rm{i}\rm{e}^{s \rm{i}}-2}{\sqrt{2}}} \\  [4 ex]
(1-\rm{i}){\sqrt{\sqrt{3}-1}} \\
\end{array}
\right),
\end{equation}	
and $(z_1,z_2)=(\rm{e}^{r \rm{i}},\rm{e}^{s \rm{i}}) \in \mathbb{S}^1\times \mathbb{S}^1$.
Now $\langle V,V \rangle = V^{*} \cdot H \cdot V$ is
\begin{flalign} \label{item:B12andB43} &
2\cdot (\cos(r-s)-\sin(r-s))-(6\sqrt{3}+20)\cdot (\cos(r)+\cos(s))
& \\ &-4\sqrt{3}\sin(r-s)+(8\sqrt{3}+10) \cdot (\sin(r)-\sin(s))+20\sqrt{3}+54.& \nonumber \end{flalign}
With Maple, the minimum  of (\ref{item:B12andB43}) is 20.51 numerically. In particular, any $V$ in (\ref{para:VB12capB43}) is a positive vector. So $B_{12}\cap B_{43}=\emptyset$.

Since $J(B_{41}\cap B_{12})=B_{12}\cap B_{23}$, and  we have proved that $B_{12}\cap B_{23}$ is empty, then  
$B_{41}\cap B_{12}$ is empty. 

For $B_{12}\cap B_{24}$, in Equation (\ref{equaation:girauddisk2dim}), we take ${\bf q}=\bf{p_{12}}$,  ${\bf r}=\bf{p_{24}}$ and ${\bf p}=\bf{p_{0}}$. Then we can parameterize the intersection $B_{12}\cap B_{24}$ of the bisectors $B_{12}$ and $B_{24}$  by $V=V(z_1,z_2)$ with $\langle V,V \rangle <0$.
Where
\begin{equation}\label{para:VB12capB24}
V=\left(
\begin{array}{c}
\displaystyle{\frac{(\rm{e}^{s \rm{i}}-1)\sqrt{3}+\rm{e}^{s\rm{i}}-3}{\sqrt{\sqrt{3}+1}}} \\ [4 ex]
\displaystyle{-\frac{(\rm{e}^{s \rm{i}}-1+2\rm{i})\sqrt{3}-\rm{i}\rm{e}^{s \rm{i}}+2\rm{e}^{r \rm{i}}-6-\rm{i}}{\sqrt{2}}} \\ [4 ex]
\sqrt{2}{\sqrt{\sqrt{3}+1}} \\
\end{array}
\right),
\end{equation}	
and $(z_1,z_2)=(\rm{e}^{r \rm{i}},\rm{e}^{s \rm{i}}) \in \mathbb{S}^1\times \mathbb{S}^1$.
Now $\langle V,V \rangle = V^{*}\cdot H  \cdot V$ is
\begin{flalign} \label{item:B12andB24} &
2\sqrt{3}\cdot (\cos(r-s)-\sin(s))-2\sin(r-s)-(10\sqrt{3}+8)\cos(s)
& \\ &-(12+2\sqrt{3})\cos(r)+(-2+4\sqrt{3})\sin(r)+6\sqrt{3}+32.& \nonumber \end{flalign}
With Maple, the minimum  of (\ref{item:B12andB24}) is 4.58 numerically. In particular, any $V$ in (\ref{para:VB12capB24}) is a positive vector. So $B_{12}\cap B_{24}=\emptyset$.
We end the proof  of   (\ref{item:B12empty}) of Proposition \ref{prop:B12}.

For $B_{12}\cap B_{34}$, in Equation (\ref{equaation:girauddisk2dim}), we take ${\bf q}=\bf{p_{12}}$,  ${\bf r}=\bf{p_{34}}$ and ${\bf p}=\bf{p_{0}}$. Then we can parameterize the intersection $B_{12}\cap B_{34}$ of the bisectors $B_{12}$ and $B_{34}$  by $V=V(z_1,z_2)$ with $\langle V,V \rangle <0$.
Where
\begin{equation}\label{para:VB12capB34}
V=\left(
\begin{array}{c}
\displaystyle{\frac{(\rm{e}^{r \rm{i}}+\rm{e}^{s \rm{i}}-4)\sqrt{3}+\rm{e}^{r \rm{i}}+\rm{e}^{s\rm{i}}-6+2\rm{i}}{\sqrt{\sqrt{3}+1}}} \\   [4 ex]
\displaystyle{\frac{(\rm{i}-\sqrt{3})(\rm{e}^{s \rm{i}}-\rm{e}^{r \rm{i}})}{\sqrt{2}}} \\  [4 ex]
(\rm{i}-\sqrt{3})\sqrt{2}{\sqrt{\sqrt{3}+1}} \\
\end{array}
\right),
\end{equation}	
and $(z_1,z_2)=(\rm{e}^{r \rm{i}},\rm{e}^{s \rm{i}}) \in \mathbb{S}^1 \times \mathbb{S}^1$.
Now $\langle V,V \rangle = V^{*}\cdot H \cdot V$ is
\begin{flalign} \label{item:B12andB34} &
-(8\sqrt{3}+12)\cdot (\cos(r)+\cos(s))+(2\sqrt{3}-2)\cos(r-s)
& \\ &+4\cdot (\sin(r)+\sin(s))+14\sqrt{3}+26.& \nonumber \end{flalign}
There is $(r,s)$ such that (\ref{item:B12andB34}) is negative, so $V=V(\rm{e}^{r \rm{i}},\rm{e}^{s \rm{i}})$ is a negative vector. A sample point is $(r,s)=(0, -\frac{\pi}{16})$.

\begin{figure}
	\begin{center}
		\begin{tikzpicture}
		\node at (0,0) {\includegraphics[width=4cm,height=4cm]{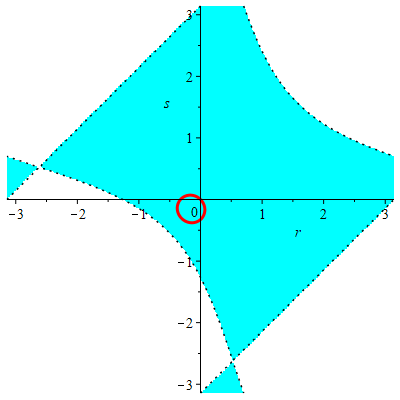}};
		
		\end{tikzpicture}
	\end{center}
	\caption{$B_{12}\cap B_{34}$ lies in the half space of  ${\bf H}^2_{\mathbb{C}}-B_{13}$  which does not contain the fixed point $p_0$ of $J$. The red circle is  $B_{12}\cap B_{34}\cap \partial {\bf H}^2_{\mathbb C}$ in coordinates of $(r,s) \in [-\pi,\pi]^2$. So the small disk bounded by the red circle is $B_{12}\cap B_{34}\cap {\bf H}^2_{\mathbb C}$. The cyan region consists of points such that whose distance to $p_{13}$ is smaller than whose distance to $p_{0}$. This figure  illustrates  that  the disk $B_{12}\cap B_{34}$ lies in the component of  ${\bf H}^2_{\mathbb{C}}-B_{13}$  which does not contain the  point $p_0$.}
	\label{figure:B13coverB12capB34}
\end{figure}

But for $V$ in (\ref{para:VB12capB34}), $|V^{*} \cdot H \cdot {\bf p_0}|^2$ is the constant $8+8\sqrt{3}$, which is 21.85 numerically.
For  $V$ in (\ref{para:VB12capB34}), $|V^{*}\cdot H \cdot  {\bf p_{13}}|^2$ is 
\begin{flalign} \label{item:B12andB34distancep13} &
4(1+\sqrt{3}) \cdot (\cos(r-s)-(\cos(r)+\cos(s))\sqrt{3}-\sin(r)-\sin(s)+3).
& \end{flalign}
With Maple, the maximum  of (\ref{item:B12andB34distancep13}) with the condition $ V^{*}\cdot H \cdot  V=0$
is 14.23 numerically, which is smaller than $|V^{*} \cdot H \cdot  {\bf p_0}|^2$.  Moreover, the  minimum  of (\ref{item:B12andB34distancep13}) with the condition $ V^{*}\cdot  H \cdot  V=0$ is 5.85. 
In particular, the ideal  boundary of $B_{12}\cap B_{34}$, say $B_{12}\cap B_{34}\cap \partial {\bf H}^2_{\mathbb{C}}$, is disjoint from $B_{13}$.
Then we have $B_{12}\cap B_{34}$ is disjoint from $B_{13}$ by well-known properties of bisectors in ${\bf H}^2_{\mathbb{C}}$, see	\cite{Go}.  For the sample point $V=V(\rm{e}^{0\cdot  \rm{i}},\rm{e}^{-\frac{\pi}{16} \rm{i}})$, it is easy to see 
its distance to $p_{13}$ is smaller than its distance to $p_{0}$.
We have $B_{12}\cap B_{34}$ lies in the half space of  ${\bf H}^2_{\mathbb{C}}-B_{13}$  which does not contain the fixed point $p_0$ of $J$. In particular,  $B_{12}\cap B_{34}$ does not lie in the partial Dirichlet domain $D_{R}$. See also  Figure 	\ref{figure:B13coverB12capB34} for an  illustration of this fact.
This ends the proof  of   (\ref{item:B12nonempty}) of Proposition \ref{prop:B12}.

\end{proof}

%\begin{flalign} \label{item:B12andB32} &
%-(10\sqrt{3}+12)\cos(r)-(10\sqrt{3}+12)\cos(s)+2\sqrt{3}\sin(r-s)
%& \\ &+2\sqrt{3}\cos(r-s)+14\sqrt{3}-6\sin(r)+6\sin(s)+36.& \nonumber \end{flalign}
%With Maple, the minimum  of (\ref{item:B12andB32}) is 159.57 numerically. In particular, any %$V$ in (\ref{para:VB12capB32}) is positive. So $B_{12}\cap B_{32}=\emptyset$. 

%$$V=\left(\begin{matrix}\
%-\frac{(\rm{i}\rm{e}^{r \rm{i}}-\rm{e}^{s \rm{i}}+2-2 \rm{i})\sqrt{3}+\rm{i}\rm{e}^{r %\rm{i}}-\rm{i}\rm{e}^{s\rm{i}}+2-4\rm{i}}{\sqrt{\sqrt{3}+1}}\\
%-\frac{(\rm{e}^{r \rm{i}}+\rm{e}^{s \rm{i}}-2)\sqrt{3}-\rm{i}\rm{e}^{r \rm{i}}-\rm{i}\rm{e}^{s %\rm{i}}-4+6\rm{i}}{\sqrt{2}}	\\
%\frac{1-\rm{i}+(1+\rm{i})\sqrt{3}}{\sqrt{\sqrt{3}-1}}\\ \end{matrix}\right),$$

%$$(8\sqrt{3}+4)\sin(s)-(8\sqrt{3}+20)\cos(r)-(8\sqrt{3}+16)\cos(s)+(2\sqrt{3}+2)\sin(r-s)+4\cos(r-s)-8
%\sin(r)+16\sqrt{3}+48.$$

\begin{prop}\label{prop:B13} For the bisector $B_{13}$ of $I_1I_3$, we have $B_{13}$ does not intersect  $B_{21}$, $B_{23}$, $B_{43}$, $B_{41}$ and $B_{24}$.
\end{prop}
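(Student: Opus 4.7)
The plan is to follow the template of Proposition \ref{prop:B12}, but first to exploit an additional $\mathbb{Z}_4$--symmetry that reduces the workload, and then to treat separately one case where the Giraud disk machinery degenerates.

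First I would use the fact that $J^2$ preserves the bisector $B_{13}$. Indeed, $J I_i J^{-1} = I_{i+1}$, and $(I_1 I_3)^2 = id$ forces $I_1 I_3 = I_3 I_1$, so $J^2(I_1 I_3) J^{-2} = I_3 I_1 = I_1 I_3$; hence $J^2(p_{13}) = p_{13}$ and $J^2(B_{13}) = B_{13}$. On the other hand $J^2$ swaps $p_{21}\leftrightarrow p_{43}$ and $p_{23}\leftrightarrow p_{41}$, and fixes $p_{24}$. This cuts the five required verifications down to the three cases $B_{13}\cap B_{21}$, $B_{13}\cap B_{23}$, and $B_{13}\cap B_{24}$.

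For $B_{13}\cap B_{21}$ and $B_{13}\cap B_{23}$, the three base points have linearly independent lifts in $\mathbb{C}^{2,1}$, since $\mathbf{p_0}$ and $\mathbf{p_{13}}$ both have vanishing second coordinate while $\mathbf{p_{21}}$ and $\mathbf{p_{23}}$ do not. So I would apply spinal coordinates exactly as in Proposition \ref{prop:B12}: parameterize the intersection via the Hermitian cross product formula \eqref{equaation:girauddisk2dim} with $(\mathbf{p},\mathbf{q},\mathbf{r}) = (\mathbf{p_0},\mathbf{p_{13}},\mathbf{p_{ij}})$, expand $\langle V,V\rangle$ as a trigonometric function of $(r,s)\in[-\pi,\pi]^2$ (where $z_1 = \mathrm{e}^{r\mathrm{i}}$, $z_2 = \mathrm{e}^{s\mathrm{i}}$), and verify numerically with Maple that its minimum is strictly positive. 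Every such $V$ then lies in $V_+$, so the intersection is empty.

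The genuine obstacle is $B_{13}\cap B_{24}$, because the three points $p_0 = [0,0,1]^t$, $p_{13} = [\sqrt{2},0,\sqrt{3}]^t$, $p_{24} = [-\sqrt{2},0,\sqrt{3}]^t$ all lie in the complex line $\{z_2 = 0\}$; in fact $\mathbf{p_{13}} + \mathbf{p_{24}} = 2\sqrt{3}\,\mathbf{p_0}$, so their lifts are linearly dependent and the Giraud disk parameterization is unavailable. I would instead argue directly in $\mathbb{C}^{2,1}$: for $q = [z_1,z_2,z_3]^t$, the conditions $|\langle q,p_0\rangle| = |\langle q,p_{13}\rangle| = |\langle q,p_{24}\rangle|$ become $|z_3|^2 = 2|z_1|^2 + 3|z_3|^2 \mp 2\sqrt{6}\,\mathrm{Re}(z_1\bar{z_3})$. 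Equating the two right hand sides forces $\mathrm{Re}(z_1\bar{z_3}) = 0$, after which equating either to $|z_3|^2$ forces $|z_1|^2 + |z_3|^2 = 0$. Thus $z_1 = z_3 = 0$, but then $\langle q,q\rangle = |z_2|^2 \geq 0$, contradicting $q \in \overline{{\bf H}^2_{\mathbb C}}$. Once this degeneracy is identified, nothing in the argument is technically deep; the only real novelty compared with Proposition \ref{prop:B12} is that a Giraud disk argument is unavailable for one of the pairs, and one must drop to a direct algebraic calculation.
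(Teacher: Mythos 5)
Your proposal is correct and follows the same overall strategy as the paper: reduce the number of cases by symmetry, dispose of the generic pairs by the spinal-coordinate/Giraud parameterization with a numerically certified positive minimum, and treat the degenerate triple $\{p_0,p_{13},p_{24}\}$ by a separate argument. The two places where you diverge are minor but worth recording. First, the paper does not use your (correct) observation that $J^2$ stabilizes $B_{13}$; instead it pulls back the already-proven fact $B_{12}\cap B_{24}=\emptyset$ under $J^{\pm 1}$ to get $B_{13}\cap B_{23}=B_{13}\cap B_{41}=\emptyset$ for free, and then asserts that $B_{13}\cap B_{21}$ and $B_{13}\cap B_{43}$ follow from two further Giraud computations "similar to" those of Proposition \ref{prop:B12}. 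Either bookkeeping leaves exactly two new spinal-coordinate computations, so the workload is the same; your version has the small advantage of being self-contained within the proposition, the paper's of reusing an established inequality. Second, for $B_{13}\cap B_{24}$ the paper observes that $p_0$, $p_{13}$, $p_{24}$ lie in the complex line $l=\{[z_1,0,1]^t\}$, checks $B_{13}\cap B_{24}\cap l=\emptyset$ inside $l\cong{\bf H}^1_{\mathbb C}$, and concludes via orthogonal projection onto $l$ (which is the common complex spine of the two bisectors). Your direct computation in $\mathbb{C}^{2,1}$ — forcing $\mathrm{Re}(z_1\bar z_3)=0$ and then $z_1=z_3=0$, hence $\langle q,q\rangle\ge 0$ — proves the same emptiness without invoking the projection structure of bisectors, and in fact handles the closure in $\overline{{\bf H}^2_{\mathbb C}}$ explicitly; it is the more elementary and more fully written-out of the two arguments.
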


\begin{proof}We have proved $B_{12}\cap B_{24}=\emptyset$. By the $\langle J \rangle= \mathbb{Z}_4$ symmetry, we have $$B_{13}\cap B_{23}=\emptyset, ~~~~B_{13}\cap B_{41}=\emptyset.$$
	The fact that $B_{13}\cap B_{21}=\emptyset$ and $B_{13}\cap B_{43}=\emptyset$ can be proved similarly to $B_{12}\cap B_{24}=\emptyset$ in Proposition \ref{prop:B12}.

For  $B_{13}\cap B_{24}$. It is easy to  see  $p_0$, $p_{13}$ and $p_{24}$ lie in the $\mathbb{C}$-line 
$$l=\left\{	\left[z_1,~~0, ~~1\right]^{t} \in {\bf H}^2_{\mathbb C}\right\}.$$
Now it is easy to see $B_{13}\cap B_{24} \cap l=\emptyset$. Then from the projection of $ {\bf H}^2_{\mathbb C}$ to $l$, we get  $B_{13}\cap B_{24}=\emptyset$.

%??????????For $B_{13}\cap B_{24}$, in Equation (\ref{equaation:girauddisk2dim}), we take ${\bf %q}=\bf{p_{13}}$,  ${\bf r}=\bf{p_{24}}$ and ${\bf p}=\bf{p_{0}}$. Then we can parameterize the %intersection $B_{13}\cap B_{24}$ of the bisectors $B_{13}$ and $B_{24}$  by $V=V(z_1,z_2)$ with %$\langle V,V \rangle <0$.
%Where
%\begin{equation}\label{para:VB12capB34}
%V=\left(
%\begin{array}{c}
%0 \\   [0 ex]
%-\sqrt{2}(-2 \sqrt{3}+\rm{e}^{s \rm{i}}+\rm{e}^{r \rm{i}}) \\  [0 ex]
%0 \\
%\end{array}
%\right),
%\end{equation}	
%and $(z_1,z_2)=(\rm{e}^{r \rm{i}},\rm{e}^{s \rm{i}}) \in \mathbb{S}^1 \times \mathbb{S}^1$.
%Now $\langle V,V \rangle = V^{*}\cdot H \cdot V$ is
%\begin{flalign} \label{item:B13andB24} &
%-8\cos(r)\sqrt{3}-8\cos(s)\sqrt{3}+4\cos(r-s)+28.
%& &  \end{flalign}
%With Maple, the minimum of   \ref{item:B13andB24})  is 4.28 numerically when $r=s=0$. So any $V$ in  %(\ref{para:VB12capB34}) is a positive vector. We have $B_{13}\cap B_{24}$ is empty. 

\end{proof}

\begin{prop}\label{prop:B12intersection} For the bisectors $B_{12}$,  $B_{13}$ and  $B_{14}$,
	 we have:
	
	\begin{enumerate}

		\item Each of the intersections   $B_{12} \cap B_{13}$, $B_{12} \cap B_{14}$ and   $B_{13}\cap B_{14}$ is a non-empty Giraud disk;
		
		\item  \label{item:B12B13B14}The triple intersection $B_{12} \cap B_{13}\cap B_{14}$ is an arc  in each of  the Giraud disks $B_{12} \cap B_{13}$, $B_{12} \cap B_{14}$ and $B_{13} \cap B_{14}$.
		
	%	\item  \label{item:B12andB14giraud}$B_{12}$ intersects   $B_{14}$ in a Giraud disk $B_{12}\cap B_{14}$. This disk intersects $B_{13}$ in an arc  $B_{12} \cap B_{13}\cap B_{14}$;
	
	%	\item  \label{item:B12andB13giraud}$B_{12}$ intersects   $B_{13}$ in a Giraud disk $B_{12}\cap B_{13}$.  This disk intersects $B_{14}$ in an arc  $B_{12} \cap B_{13}\cap B_{14}$;
		
	%		\item  \label{item:B13andB14giraud}$B_{13}$ intersects   $B_{14}$ in a Giraud disk $B_{13}\cap B_{14}$.  This disk intersects $B_{12}$ in an arc  $B_{12} \cap B_{13}\cap B_{14}$.

	\end{enumerate}
\end{prop}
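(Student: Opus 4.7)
The plan is to address both parts via the spinal-coordinate machinery of Subsection~\ref{subsection:Spinalcoordinates}. For part (1), I would first verify that each of the triples $\{p_0, p_{12}, p_{13}\}$, $\{p_0, p_{12}, p_{14}\}$ and $\{p_0, p_{13}, p_{14}\}$ has linearly independent lifts in $\mathbb{C}^{2,1}$, which is a direct $3\times 3$ determinant check with the explicit coordinates in Subsection~\ref{subsec:2dimDirichlet}. Once linear independence holds, Proposition~\ref{prop:Giraud} applies, so each pairwise intersection is a Giraud disk as soon as it is non-empty. Non-emptiness I would establish exactly as in part~(\ref{item:B12nonempty}) of Proposition~\ref{prop:B12}: use the parametrization~(\ref{equaation:girauddisk2dim}) with $\mathbf{p}=\mathbf{p_0}$, $\mathbf{q}=\mathbf{p_{1i}}$, $\mathbf{r}=\mathbf{p_{1j}}$, expand $\langle V,V\rangle$ as a real trigonometric polynomial in $(r,s)$, and either exhibit an explicit sample point where this expression is negative or use Maple to minimize and confirm that the minimum is negative.

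For part (2), I would fix one of these Giraud disks, say $B_{12}\cap B_{13}$, parametrize it by $(z_1,z_2)=(e^{ir},e^{is})$ with $\langle V,V\rangle<0$, and write the trace of the third bisector $B_{14}$ on it as the locus
\[
|\langle V(z_1,z_2),\mathbf{p_0}\rangle|^2 = |\langle V(z_1,z_2),\mathbf{p_{14}}\rangle|^2.
\]
Since $V$ is affine in $z_1$ and $z_2$, this is a real polynomial of degree two in each of $z_j$, $\overline{z}_j$, so, following the approach recalled at the end of Subsection~\ref{subsection:Spinalcoordinates}, I can fix $z_1\in\mathbb{S}^1$ and solve for $z_2$ via a quadratic equation, obtaining an explicit algebraic parametrization $z_2 = z_2(z_1)$ of the trace. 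Intersecting with the negativity region $\langle V,V\rangle<0$ then produces a real-analytic curve whose endpoints lie on the ideal boundary $\partial(B_{12}\cap B_{13})$, giving the sought arc. The same computation run with $B_{12}\cap B_{14}$ and with $B_{13}\cap B_{14}$ as the ambient Giraud disk displays the triple intersection as an arc inside each of those disks as well.

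The main obstacle is computational rather than conceptual: the trigonometric expressions for $\langle V,V\rangle$ and for $|\langle V,\mathbf{p_0}\rangle|^2 - |\langle V,\mathbf{p_{14}}\rangle|^2$ are unwieldy, and Maple is needed both to verify non-emptiness in part (1) and to analyse the quadratic in part (2). The most delicate point will be ruling out pathologies in the shape of the trace curve, namely that it could consist of more than one arc, could close into a loop inside the Giraud disk, or could be tangent to $\partial(B_{12}\cap B_{13})$ in a degenerate way. A workable strategy is to write down the discriminant of the quadratic explicitly, track the two branches $z_2^{\pm}(z_1)$ as $z_1$ varies around $\mathbb{S}^1$, and confirm numerically that exactly one branch lies in the negativity region and that $\langle V,V\rangle$ vanishes at precisely two endpoints. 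The combinatorial prediction supplied by Figure~\ref{figure:22infty2dimdirichletabstract} (in which $B_{12}$, $B_{13}$ and $B_{14}$ share a single common edge) provides an \emph{a priori} reason to expect precisely this conclusion, and reduces the Maple verification to a sanity check of this predicted combinatorial picture.
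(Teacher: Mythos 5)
Your proposal is correct and follows essentially the same route as the paper: verify linear independence, parametrize each Giraud disk in spinal coordinates, exhibit a sample point with $\langle V,V\rangle<0$ for non-emptiness, and cut the disk by the trace of the third bisector. The one place where the paper is cleaner than your plan is the ``delicate point'' you flag at the end: instead of tracking the two branches of a quadratic and checking numerically that the trace is a single arc, the paper observes that with the chosen lifts $|\langle V,\mathbf{p_0}\rangle|^2$ is constant and the trace equation factors \emph{exactly} into three coordinate circles on the Giraud torus (e.g.\ $\{s=-\pi/2\}$, $\{r=\pi/2\}$, $\{r=s\}$ for the trace of $B_{13}$ on $B_{12}\cap B_{14}$), of which only one meets the negativity region, and does so in a single interval --- so the loop/multi-arc/tangency pathologies you worry about are ruled out by explicit algebra rather than numerics.
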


\begin{proof}
For $B_{12}\cap B_{14}$, in Equation (\ref{equaation:girauddisk2dim}), we take ${\bf q}=\bf{p_{12}}$,  ${\bf r}=\bf{p_{14}}$ and ${\bf p}=\bf{p_{0}}$. It is easy to see these three vectors are linearly independent. Then we can parameterize the intersection $B_{12}\cap B_{14}$  of the bisectors $B_{12}$ and $B_{14}$  by $V=V(z_1,z_2)$ with $\langle V,V \rangle <0$.
Where
\begin{equation}\label{para:VB12capB14}
V=\left(
\begin{array}{c}
\displaystyle{\frac{(\rm{e}^{s \rm{i}}-\rm{e}^{r \rm{i}})\sqrt{3}-\rm{e}^{r \rm{i}}+\rm{e}^{s\rm{i}}-2\rm{i}}{\sqrt{\sqrt{3}+1}}} \\ [4 ex]
\displaystyle{\frac{(\rm{e}^{r \rm{i}}-\rm{e}^{s \rm{i}}-2\rm{i})\sqrt{3}+\rm{i}\rm{e}^{r \rm{i}}+\rm{i}\rm{e}^{s \rm{i}}}{\sqrt{2}}} \\ [4 ex]
-\rm{i}\sqrt{2}{\sqrt{\sqrt{3}+1}} \\
\end{array}
\right)
\end{equation}	
and $(z_1,z_2)=(\rm{e}^{r \rm{i}},\rm{e}^{s \rm{i}}) \in \mathbb{S}^1 \times \mathbb{S}^1$.
Now $\langle V,V \rangle = V^{*} \cdot H \cdot V$ is
\begin{flalign} \label{item:B12andB14} &
2\sqrt{3} \cdot (\sin(r-s)-\cos(r)-\cos(s)-\cos(r-s)+1)
& \\ &-4\cos(r-s)-2\sin(r)+2\sin(s)+8.& \nonumber \end{flalign}
There are $(r,s)$ such that (\ref{item:B12andB14}) is negative, that is  $V=V(\rm{e}^{r \rm{i}},\rm{e}^{s \rm{i}})$ is a  negative vector. For example when $(r,s)=(0,0)$. Then $B_{12}\cap B_{14}$ is non empty, it is a Giraud disk.

Now for $V$  in (\ref{para:VB12capB14}), $|V^{*} \cdot H \cdot {\bf p_0}|^2$ is $2+2\sqrt{3}$, and  $|V^{*} \cdot H \cdot { \bf p_{13}}|^2$
is 
\begin{flalign} \label{item:B12andB14andB13} &
4\sqrt{3}(\sin(s)-\sin(r)-\cos(r-s))+4(\sin(s)-\sin(r)-\cos(r-s))
& \\ &+6\sqrt{3}+6.& \nonumber \end{flalign}
The solutions of  $V$  in (\ref{para:VB12capB14}) with the condition $$|V^{*} \cdot H \cdot {\bf  p_{13}}|^2=2+2\sqrt{3}$$ are 
$$\{s=-\frac{\pi}{2}\},~~~ \{ r= \frac{\pi}{2}\}, ~~~\{r=s\}.$$
It is easy to see when $s=-\frac{\pi}{2}$ or $r=\frac{\pi}{2}$ then $V$ is positive.
But when $r=s$ with $$r \in \left(-\arccos(\frac{\sqrt{3}}{3}),-\arccos(\frac{\sqrt{3}}{3})\right),$$ $V$ is a negative point in $B_{12}\cap B_{14}$. So $B_{12}\cap B_{14}\cap B_{13}$ is an arc in $B_{12}\cap B_{14}$.
See Figure 	\ref{figure:B12coverB14capB13}  for $B_{12}\cap B_{14}$ in coordinates  $(r,s) \in [-\pi,\pi]^2$. 

% This ends the proof of (\ref{item:B12andB14giraud}) of Proposition \ref{prop:B12intersection}. We also proved (\ref{item:B12B13B14})  of Proposition \ref{prop:B12intersection} in this process. 

\begin{figure}
	\begin{center}
		\begin{tikzpicture}
		\node at (0,0) {\includegraphics[width=4cm,height=4cm]{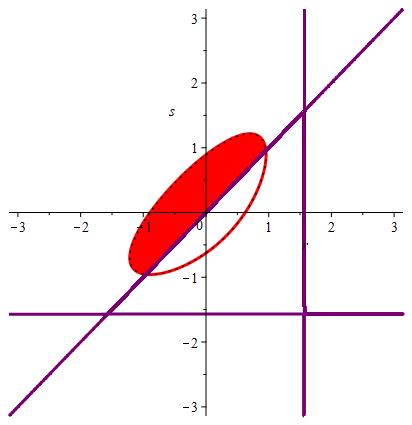}};

		\end{tikzpicture}
	\end{center}
	\caption{The small disk bounded by the red circle is the Giraud disk $B_{12}\cap B_{14}$. The three purple lines are points (in the  Giraud torus  $\hat{B}(p_{0},p_{12},p_{14}$) such that its distances to $p_{13}$ and  $p_{0}$ are the same.  So the red half disk is the part of  $B_{12}\cap B_{14}$ which lies in the half space of  ${\bf H}^2_{\mathbb{C}}-B_{13}$   containing the fixed point of $J$, and then  it is the ridge $s_{12}\cap s_{14}$.
	 }
	\label{figure:B12coverB14capB13}
\end{figure}

%The red half disk is the ridge $s_{12}\cap s_{14}$.

\begin{figure}
	\begin{center}
		\begin{tikzpicture}
		\node at (0,0) {\includegraphics[width=4cm,height=4cm]{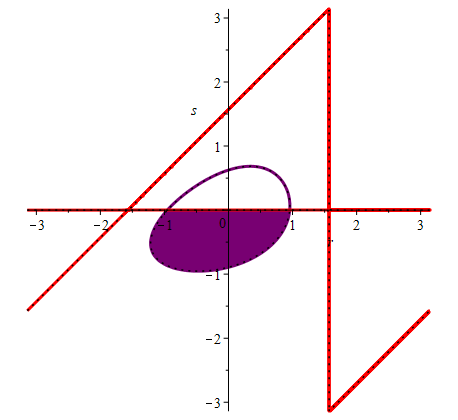}};

		\end{tikzpicture}
	\end{center}
	\caption{The small disk bounded by the purple circle is the Giraud disk $B_{12}\cap B_{13}$. The three red lines are points (in the  Giraud torus  $\hat{B}(p_{0},p_{12},p_{13}$)  such that its distances to $p_{14}$ and $p_{0}$ are the same. So the part of  $B_{12}\cap B_{13}$ which lies in the half space of  ${\bf H}^2_{\mathbb{C}}-B_{14}$  containing the fixed point of $J$ is the purple half disk, and then it is the ridge $s_{12}\cap s_{13}$.}
	\label{figure:B12andB13capB14}
\end{figure}

For $B_{12}\cap B_{13}$, we have proved that $B_{12}\cap B_{13}$ is non-empty, so it is a Giraud disk. In Equation (\ref{equaation:girauddisk2dim}), we take ${\bf q}=\bf{p_{12}}$,  ${\bf r}=\bf{p_{13}}$ and ${\bf p}=\bf{p_{0}}$. It is easy to see these three points are linearly independent. Then we can parameterize the intersection $B_{12}\cap B_{13}$ of the bisectors $B_{12}$ and $B_{13}$  by $V=V(z_1,z_2)$ with $\langle V,V \rangle <0$.
Where
\begin{equation}\label{para:VB12capB13}
V=\left(
\begin{array}{c}
\displaystyle{\frac{(\rm{e}^{s \rm{i}}-1)\sqrt{3}+\rm{e}^{s\rm{i}}-3}{\sqrt{\sqrt{3}+1}}} \\  [4 ex]
\displaystyle{\frac{(-\rm{e}^{s \rm{i}}-1)\sqrt{3}+\rm{i}\rm{e}^{s \rm{i}}+2\rm{e}^{r \rm{i}}-\rm{i}}{\sqrt{2}}} \\  [4 ex]
-\sqrt{2}{\sqrt{\sqrt{3}+1}} \\
\end{array}
\right),
\end{equation}	
and $(z_1,z_2)=(\rm{e}^{r \rm{i}},\rm{e}^{s \rm{i}}) \in \mathbb{S}^1 \times \mathbb{S}^1$.
Now $\langle V,V \rangle = V^{*} \cdot H \cdot  V$ is
\begin{flalign} \label{item:B12andB13} &
2\sqrt{3}\cdot (\sin(s)-\cos(r)-\cos(s)-\cos(r-s)+1)
& \\ &+2\sin(r-s)-2\sin(r)-4\cos(s)+8.& \nonumber \end{flalign}
Now for $V$  in (\ref{para:VB12capB13}), $|V^{*} \cdot H \cdot {\bf p_0}|^2$ is $2+2\sqrt{3}$, and 
$|V^{*} \cdot H \cdot{\bf  p_{14}}|^2$
is 
\begin{flalign} \label{item:B12andB13} &
4\sqrt{3}\cdot (\sin(r-s)-\sin(r)-\cos(s))+4\cdot (\sin(r-s)-\sin(r)-\cos(s))
& \\ &+6\sqrt{3}+6.& \nonumber \end{flalign}
The solutions of  $V$  in (\ref{para:VB12capB13}) with the condition $$|V^{*} \cdot H \cdot {\bf p_{14}}|^2=2+2\sqrt{3}$$ are 
$$\{s=0\},~~~ \{ r= \frac{\pi}{2}\}, ~~~\{r=s-\frac{\pi}{2}\}.$$
It is easy to see when $ r= \frac{\pi}{2}$ and $r=s-\frac{\pi}{2}$ then $V$ is positive.
But when $s=0$ with $$r \in \left(-\arccos(\frac{\sqrt{3}}{3}),-\arccos(\frac{\sqrt{3}}{3})\right),$$ $V$ is a negative point in $B_{12}\cap B_{13}$.
See Figure 	\ref{figure:B12andB13capB14}  for $B_{12}\cap B_{13}$ in coordinates $(r,s) \in [-\pi,\pi]^2$.  

Similarly, $B_{13}\cap B_{14}$ is a non-empty Giraud disk.  We omit it. 
\end{proof}

% This ends the proof of (\ref{item:B12andB13giraud}) of Proposition \ref{prop:B12intersection}.

%The proof of  (\ref{item:B13andB14giraud}) of Proposition \ref{prop:B12intersection} is similar.

%Since there is a $\langle J \rangle= \mathbb{Z}_4$ symmetry.

%\subsection{The 3-sides and ridges of  $D_{R}$ in  ${\bf H}^2_{\mathbb{C}}$}\label{subsec:ridge}

From Propositions  \ref{prop:B12}, \ref{prop:B13} and  \ref{prop:B12intersection}, we have the following two propositions, which study  the  combinatorics of 3-sides and ridges of  $D_{R}$.
They  are crucial for the application of the  Poincar\'e polyhedron theorem.
For each $I_{i}I_{j} \in R$, the side $s_{ij}$ by definition is $B_{ij} \cap D_{R}$, which is a 3-dimensional object. The reader may compare to  Figure	\ref{figure:22infty2dimdirichletabstract}, which illustrates the ideal boundary behaviors of these 3-sides $s_{ij}$. 

\begin{prop}\label{prop:s12}	
	
	The side  $s_{12}=B_{12}\cap D_{R}$  is 3-ball in $ {\bf H}^2_{\mathbb{C}} \cup \partial {\bf H}^2_{\mathbb{C}}$:
	\begin{itemize} \item   $s_{12} \cap \partial {\bf H}^2_{\mathbb{C}}$ is a disk with the point $q_{12}$ in its interior;
		\item The frontier of $s_{12} \cap  {\bf H}^2_{\mathbb{C}}$ is a disk   consisting  of two half disks $s_{12}\cap s_{14}$ and $s_{12}\cap s_{13}$ glued  along the arc $$s_{12}\cap s_{13}\cap s_{14}=B_{12}\cap B_{13}\cap B_{14}.$$
		
		\end{itemize}
\end{prop}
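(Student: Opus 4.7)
The plan is to identify, among the ten bisectors coming from elements of $R$, exactly which ones carve out $s_{12}$ from the full bisector $B_{12}$, and then to describe the resulting piece topologically. By the preceding three propositions, the only bisectors whose interaction with $B_{12}$ is nontrivial \emph{and} relevant to $D_R$ are $B_{13}$ and $B_{14}$: the bisector $B_{21}$ is merely tangent to $B_{12}$ at the single boundary point $q_{12}$ (Proposition \ref{prop:B12}(\ref{item:B12andB21})); the bisectors $B_{23},B_{32},B_{43},B_{41},B_{24}$ are disjoint from $B_{12}$ (Proposition \ref{prop:B12}(\ref{item:B12empty})); and although $B_{34}\cap B_{12}\neq \emptyset$, the intersection lies in the half-space of ${\bf H}^2_{\mathbb C}-B_{13}$ not containing $p_0$ (Proposition \ref{prop:B12}(\ref{item:B12nonempty})), so on $B_{12}$ the bisector $B_{13}$ already cuts away every point where $B_{34}$ could be active.

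Next, I would verify that for each bisector $B$ in the disjoint list $\{B_{23},B_{32},B_{43},B_{41},B_{24}\}$, the entire bisector $B_{12}$ lies on the same side of $B$ as $p_0$. Since $B_{12}$ is connected and does not cross $B$, it suffices to evaluate a single convenient test point on $B_{12}$ (for instance, the midpoint of the real spine of $B_{12}$, or equivalently a point with lift $\tfrac{1}{2}({\bf p_0}+{\bf p_{12}})$ after normalization) and check $|\langle\cdot,{\bf p_0}\rangle|\le|\langle\cdot,g({\bf p_0})\rangle|$. This reduces the description of $s_{12}$ to
\[
s_{12}\;=\;B_{12}\cap H^{+}(B_{13})\cap H^{+}(B_{14}),
\]
where $H^{+}(B)$ denotes the closed half-space bounded by $B$ containing $p_0$.

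For the ideal-boundary part, the spinal sphere $\Sigma_{12}:=B_{12}\cap \partial{\bf H}^2_{\mathbb C}$ is a $2$-sphere, and $q_{12}\in\Sigma_{12}$ as the unique tangency point with the spinal sphere of $B_{21}$. The two curves $\Sigma_{12}\cap B_{13}$ and $\Sigma_{12}\cap B_{14}$ are the circles bounding, on $\Sigma_{12}$, the spinal-sphere discs removed by the conditions $|\langle\cdot,{\bf p_0}\rangle|\le|\langle\cdot,{\bf p_{13}}\rangle|$ and $|\langle\cdot,{\bf p_0}\rangle|\le|\langle\cdot,{\bf p_{14}}\rangle|$; these two discs are disjoint (the arc $B_{12}\cap B_{13}\cap B_{14}$ from Proposition \ref{prop:B12intersection}(\ref{item:B12B13B14}) has no ideal endpoints on $\Sigma_{12}$ that lie inside both excluded discs — this I check numerically on the spinal-coordinate parametrizations already established in (\ref{para:VB12capB13}) and (\ref{para:VB12capB14})), and each excludes $q_{12}$. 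Removing two disjoint open discs from $\Sigma_{12}\cong S^2$ that both avoid $q_{12}$ gives a closed disc containing $q_{12}$ in its interior, proving the first bullet.

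For the interior part, $B_{12}\cap {\bf H}^2_{\mathbb C}$ is a topological open $3$-ball, and its frontier inside ${\bf H}^2_{\mathbb C}$ under the two half-space conditions is $(B_{12}\cap B_{13}\cap H^+(B_{14}))\cup(B_{12}\cap B_{14}\cap H^+(B_{13}))$, namely the ridges $s_{12}\cap s_{13}$ and $s_{12}\cap s_{14}$. From Proposition \ref{prop:B12intersection} and Figures \ref{figure:B12coverB14capB13}, \ref{figure:B12andB13capB14}, each of these is a half-disc in the corresponding Giraud disc, cut along the common arc $B_{12}\cap B_{13}\cap B_{14}$. Gluing the two half-discs along this arc yields a $2$-disc, which together with the ideal-boundary disc from the previous paragraph bounds a $3$-ball. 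The main obstacle will be the bookkeeping for the disjointness of the two excluded discs on $\Sigma_{12}$ and the check that the arc $B_{12}\cap B_{13}\cap B_{14}$ is a proper arc (not a loop) with endpoints on $\Sigma_{12}$; this is what ensures the two half-discs glue into a single $2$-disc rather than a more complicated surface. Once this is confirmed from the explicit spinal coordinates, the topological conclusion that $s_{12}$ is a $3$-ball follows at once.
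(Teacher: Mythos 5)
Your overall strategy --- reduce the description of $s_{12}$ to the two half-space conditions coming from $B_{13}$ and $B_{14}$ by citing Propositions \ref{prop:B12}, \ref{prop:B13} and \ref{prop:B12intersection}, and then read off the topology of the resulting piece of $B_{12}$ --- is exactly how the paper obtains this statement (it is presented there as a direct consequence of those three propositions, with no separate written proof). Your observation that one should still verify that all of $B_{12}$ lies on the $p_0$-side of each of the disjoint bisectors $B_{23},B_{32},B_{43},B_{41},B_{24}$, by testing one point on the connected set $B_{12}$, is a legitimate point that the paper leaves implicit.

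There is, however, a genuine error in your treatment of the ideal boundary. You assert that the two discs excluded from the spinal sphere $\Sigma_{12}$ by the conditions from $B_{13}$ and $B_{14}$ are disjoint, and that removing two disjoint open discs from $S^2$ yields a closed disc. Both halves of this are wrong: a sphere minus two disjoint open discs is an annulus, not a disc, so even granting disjointness the first bullet would fail; and the discs cannot be disjoint. Indeed, by Proposition \ref{prop:B12intersection}(\ref{item:B12B13B14}) the triple intersection $B_{12}\cap B_{13}\cap B_{14}$ is a non-closed arc, properly embedded in the Giraud disc $B_{12}\cap B_{13}$ (in the paper's spinal coordinates it is the segment $\{s=0\}$ over an open $r$-interval whose endpoints are null vectors), so the closure of this arc has two \emph{ideal} endpoints lying on $\Sigma_{12}\cap B_{13}\cap B_{14}$. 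Hence the two circles $\Sigma_{12}\cap B_{13}$ and $\Sigma_{12}\cap B_{14}$ cross in two points, and the two excluded discs overlap in a bigon. The correct conclusion is that the union of the two overlapping excluded discs is itself a disc, whose complement in $\Sigma_{12}$ is the disc containing $q_{12}$ asserted in the first bullet. Note also that this crossing is precisely what your own second paragraph needs: the two half-discs $s_{12}\cap s_{13}$ and $s_{12}\cap s_{14}$ can only glue along the arc into a single properly embedded disc if the arc is genuinely shared, i.e.\ if the two circles on $\Sigma_{12}$ meet at its ideal endpoints. As written, your boundary paragraph and your interior paragraph are mutually inconsistent; replacing ``disjoint'' by ``crossing in the two ideal endpoints of the arc'' repairs the argument.
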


\begin{prop}\label{prop:s13}	
	
	The side  $s_{13}=B_{13}\cap D_{R}$  is 3-ball in ${\bf H}^2_{\mathbb{C}} \cup \partial {\bf H}^2_{\mathbb{C}}$:  
	
		\begin{itemize} \item  $s_{13} \cap \partial {\bf H}^2_{\mathbb{C}}$ is an annulus;
			 \item  The frontier of $s_{13} \cap  {\bf H}^2_{\mathbb{C}}$  are two disks, each of them  consists of two half disks:  one component is the union of $s_{13}\cap s_{12}$ and $s_{13}\cap s_{14}$ glued along  the arc  $$s_{12}\cap s_{13}\cap s_{14}=B_{12}\cap B_{13}\cap B_{14},$$ and the other component is union of $s_{13}\cap s_{32}$ and $s_{13}\cap s_{34}$ glued along  the arc  $$s_{13}\cap s_{32}\cap s_{34}=B_{13}\cap B_{32}\cap B_{34}.$$
		\end{itemize}
\end{prop}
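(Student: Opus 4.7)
The plan is to combine Propositions \ref{prop:B13} and \ref{prop:B12intersection} with the order-two element $J^2$ of the $\mathbb{Z}_4$-symmetry (which fixes $B_{13}$ setwise and exchanges the pair $\{B_{12},B_{14}\}$ with $\{B_{34},B_{32}\}$), and read off the structure of $s_{13}$ from the intersection pattern in Table \ref{table:intersecton}. First I would narrow down which bisectors in $R$ can cut into $B_{13}$: by Proposition \ref{prop:B13}, $B_{13}$ is disjoint from $B_{21},B_{23},B_{41},B_{43}$ and $B_{24}$, leaving only $B_{12},B_{14},B_{32},B_{34}$ as candidates. By Proposition \ref{prop:B12intersection}, the intersections $B_{12}\cap B_{13}$ and $B_{13}\cap B_{14}$ are non-empty Giraud disks whose triple intersection with each other is the arc $B_{12}\cap B_{13}\cap B_{14}$. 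Applying $J^2$ transfers this to the $\{B_{32},B_{34}\}$-side: $B_{13}\cap B_{32}$ and $B_{13}\cap B_{34}$ are non-empty Giraud disks meeting in the arc $B_{13}\cap B_{32}\cap B_{34}$.

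Next I would check that the two ``ends'' of $s_{13}$ are genuinely separated inside $D_{R}$. Proposition \ref{prop:B12}(2) gives $B_{12}\cap B_{32}=\emptyset$, and applying $J^{2}$ yields $B_{14}\cap B_{34}=\emptyset$. The remaining cross-intersection $B_{12}\cap B_{34}$ is non-empty by Proposition \ref{prop:B12}(3), but it lies entirely in the component of ${\bf H}^{2}_{\mathbb C}\setminus B_{13}$ not containing $p_{0}$, hence is disjoint from $D_{R}$; by $J^{2}$-symmetry, $B_{14}\cap B_{32}\cap D_{R}=\emptyset$ as well. Thus no further ridges of $s_{13}$ arise from these pairs.

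I would then read off the topology. The $\{B_{12},B_{14}\}$-end contributes two half-disks $s_{13}\cap s_{12}$ and $s_{13}\cap s_{14}$ glued along the arc $B_{12}\cap B_{13}\cap B_{14}$ (the half-disk structure being proved exactly as in Proposition \ref{prop:B12intersection}, cf.\ Figure \ref{figure:B12andB13capB14}, since the condition $|\langle V,{\bf p}_{14}\rangle|\le|\langle V,{\bf p}_{0}\rangle|$ on the Giraud disk $B_{12}\cap B_{13}$ cuts out exactly one half). The $\{B_{32},B_{34}\}$-end is its $J^{2}$-image and contributes the two half-disks $s_{13}\cap s_{32}$ and $s_{13}\cap s_{34}$ glued along $B_{13}\cap B_{32}\cap B_{34}$. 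Each gluing produces a topological $2$-disk, and by the separation step these two disks are disjoint. On the ideal boundary, the spinal sphere $B_{13}\cap\partial{\bf H}^{2}_{\mathbb C}\cong\mathbb{S}^{2}$ minus the two boundary circles of these frontier disks is an annulus, which is precisely $s_{13}\cap\partial{\bf H}^{2}_{\mathbb C}$.

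Finally, since the full boundary of $s_{13}$ in $\overline{{\bf H}^{2}_{\mathbb C}}$ is a $2$-sphere (an annulus capped off by two disks) sitting inside the closed topological $3$-ball $\overline{B_{13}}$, it follows that $s_{13}$ itself is a $3$-ball with exactly the claimed boundary decomposition. I expect the main subtlety not to be the topology per se, but the verification that the ``bad'' intersection $B_{12}\cap B_{34}$ lies outside $D_{R}$; fortunately this is exactly what Proposition \ref{prop:B12}(3) provides, so the argument is essentially a careful $J^{2}$-symmetrization of the work already done for $s_{12}$ in Proposition \ref{prop:s12}.
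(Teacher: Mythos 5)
Your overall strategy is the same as the paper's: Proposition \ref{prop:s13} is meant to be read off from Propositions \ref{prop:B12}, \ref{prop:B13} and \ref{prop:B12intersection} together with the $\mathbb{Z}_4$-symmetry, and most of your reductions are correct. However, one step is wrong as stated. To conclude that the two frontier components of $s_{13}$ are disjoint you need all four cross-intersections $B_{12}\cap B_{32}$, $B_{14}\cap B_{34}$, $B_{12}\cap B_{34}$ and $B_{14}\cap B_{32}$ to miss $s_{13}$. The first three are fine: $B_{12}\cap B_{32}=\emptyset$ by Proposition \ref{prop:B12}; $B_{14}\cap B_{34}=J^{2}(B_{12}\cap B_{32})=\emptyset$ since $J^{2}$ sends the pair $\{p_{12},p_{32}\}$ to $\{p_{34},p_{14}\}$; and $B_{12}\cap B_{34}$ lies on the wrong side of $B_{13}$ by part (3) of Proposition \ref{prop:B12}. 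But your claim that ``by $J^{2}$-symmetry, $B_{14}\cap B_{32}\cap D_{R}=\emptyset$ as well'' does not follow: $J$ acts on indices by $p_{ij}\mapsto p_{i+1,j+1}$, so $J^{2}$ swaps $B_{12}\leftrightarrow B_{34}$ and $B_{14}\leftrightarrow B_{32}$, hence maps each of $B_{12}\cap B_{34}$ and $B_{14}\cap B_{32}$ to \emph{itself}. The unordered pairs $\{12,34\}$ and $\{14,32\}$ lie in different $\langle J\rangle$-orbits (the orbit of $\{14,32\}$ is $\{\{14,32\},\{21,43\}\}$, which appears nowhere in Table \ref{table:intersecton} or in Propositions \ref{prop:B12}--\ref{prop:B12intersection}), so this case genuinely requires a separate argument.

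The gap is repairable. Either run the same spinal-coordinate computation as in Proposition \ref{prop:B12} for the triple $(p_{0},p_{14},p_{32})$, or invoke the anti-holomorphic isometry $\iota$ of (\ref{antiholo}): since the matrix $I_{1}$ is real and $\overline{J}=J^{-1}$, one has $\overline{I_{k}}=I_{2-k}$ and hence $\iota(p_{ij})=p_{2-i,2-j}$, so $\iota$ fixes $p_{0}$ and $p_{13}$, preserves $R$ and therefore $D_{R}$, and carries $B_{14}\cap B_{32}$ to $B_{12}\cap B_{34}$; part (3) of Proposition \ref{prop:B12} then transfers to give $B_{14}\cap B_{32}\cap D_{R}=\emptyset$. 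You should state one of these explicitly rather than attributing the step to $J^{2}$. (To be fair, the paper itself passes over the pair $\{14,32\}$ silently when it derives Proposition \ref{prop:s13} from the preceding propositions, so you are filling a hole the paper leaves open --- but the justification you supply for that hole is precisely the step that is incorrect.) The remaining parts of your argument --- the identification of the candidate ridges via Proposition \ref{prop:B13}, the half-disk structure of $s_{13}\cap s_{12}$ and $s_{13}\cap s_{14}$ from Proposition \ref{prop:B12intersection}, the transfer to the $\{B_{32},B_{34}\}$ end by $J^{2}$, and the annulus/3-ball conclusion --- match the paper's intent and are sound at the paper's level of rigor.
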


We have similar properties for the 3-side $s_{21}$.  Since there is a $\langle J \rangle= \mathbb{Z}_4$ symmetry, we omits the statements of the combinatorics of $s_{23}$, $s_{32}$, $s_{34}$, $s_{43}$,  $s_{41}$, $s_{14}$ and  $s_{24}$.

\begin{figure}
	\begin{center}
		\begin{tikzpicture}
		\node at (0,0) {\includegraphics[width=10cm,height=10cm]{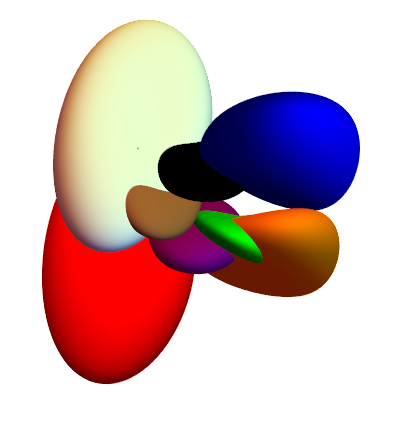}};

	%	\node at (4.1,-2.0){\Large $B_{21}$};
		
		\node at (4.2,2.1){\Large $B_{23}$};
		
		\node at (3.6,-1.1){\Large $B_{32}$};
		\node at (0.5,-0.4){\Large $B_{34}$};
		\node at (-2,-3.1){\Large $B_{14}$};
	%	\node at (2,4.1){\Large $B_{12}$};

		\node at (-1.1,0.0){\Large $B_{43}$};
		\node at (-3.1,2.0){\Large $B_{41}$};

	%	\node at (-0,-2.0){\Large $B_{24}$};
		\node at (0,-0.9){\Large $B_{13}$};

		\end{tikzpicture}
	\end{center}
	\caption{A realistic view of the boundary of  Dirichlet domain of $\rho_{\frac{5 \pi}{6}}(K) <\mathbf{PU}(2,1)$. For example,  the sphere labeled by  $B_{41}$ is in fact the spinal sphere  $B_{41} \cap \partial {\bf H}^{2}_{\mathbb C}$. The other labels have similar meanings. In this figure, we can not see $B_{21}$ and $B_{12}$, and the black one is $B_{24}$. The  brown abd green spheres labeled by $B_{43}$ and $B_{34}$ are  tangent at a point $p_{34}$, which is disjoint from the purple sphere labeled by $B_{13}$. }
	\label{figure:22infty2dimdirichlet}
\end{figure}

	\begin{remark}From Figures 	\ref{figure:22infty2dimdirichlet} and 	\ref{figure:22infty2dimdirichletabstract}, we strongly believe that $\partial {\bf H}^{2}_{\mathbb C} \cap D_{R}$ is a genus three handlebody. In other words, if we denote by $\mathcal{H}_{ij}$ the half space of ${\bf H}^2_{\mathbb{C}}-B_{ij}$ which does not contain the point $p_0$ for $I_{i}I_{j} \in R$ (so  $\mathcal{H}_{ij}$ contains $p_{ij}$). $\overline{ \mathcal{H}_{ij}}$ is the closure of  $\mathcal{H}_{ij}$ in $\overline{{\bf H}^2_{\mathbb{C}}}$. 
			Then $$(\cup _{I_{ij} \in R} \overline{ \mathcal{H}_{ij}}) \cap \partial {\bf H}^2_{\mathbb{C}}$$
		is a union of 3-balls in $\partial {\bf H}^2_{\mathbb{C}}$ (some of them are tangent), so it is a handlebody. This handlebody is the union of all the 3-balls bounded by spheres in Figure 	\ref{figure:22infty2dimdirichlet}.
		We have   $\partial {\bf H}^{2}_{\mathbb C} \cap D_{R}$  is the complement of $(\cup _{I_{ij} \in R} \overline{ \mathcal{H}_{ij}}) \cap \partial {\bf H}^2_{\mathbb{C}}$ in $ \partial {\bf H}^2_{\mathbb{C}}$, which is the region outside all of the spheres in Figure 	\ref{figure:22infty2dimdirichlet}. We guess that   $(\cup _{I_{ij} \in R} \overline{ \mathcal{H}_{ij}}) \cap \partial {\bf H}^2_{\mathbb{C}}$ is an unknotted genus three handlebody. In this paper  we do not  care it, and we do not show this rigorously. 
	\end{remark}

%. (but pinched at four disks)

\begin{figure}
	\begin{center}
		\begin{tikzpicture}
		\node at (0,0) {\includegraphics[width=10cm,height=10cm]{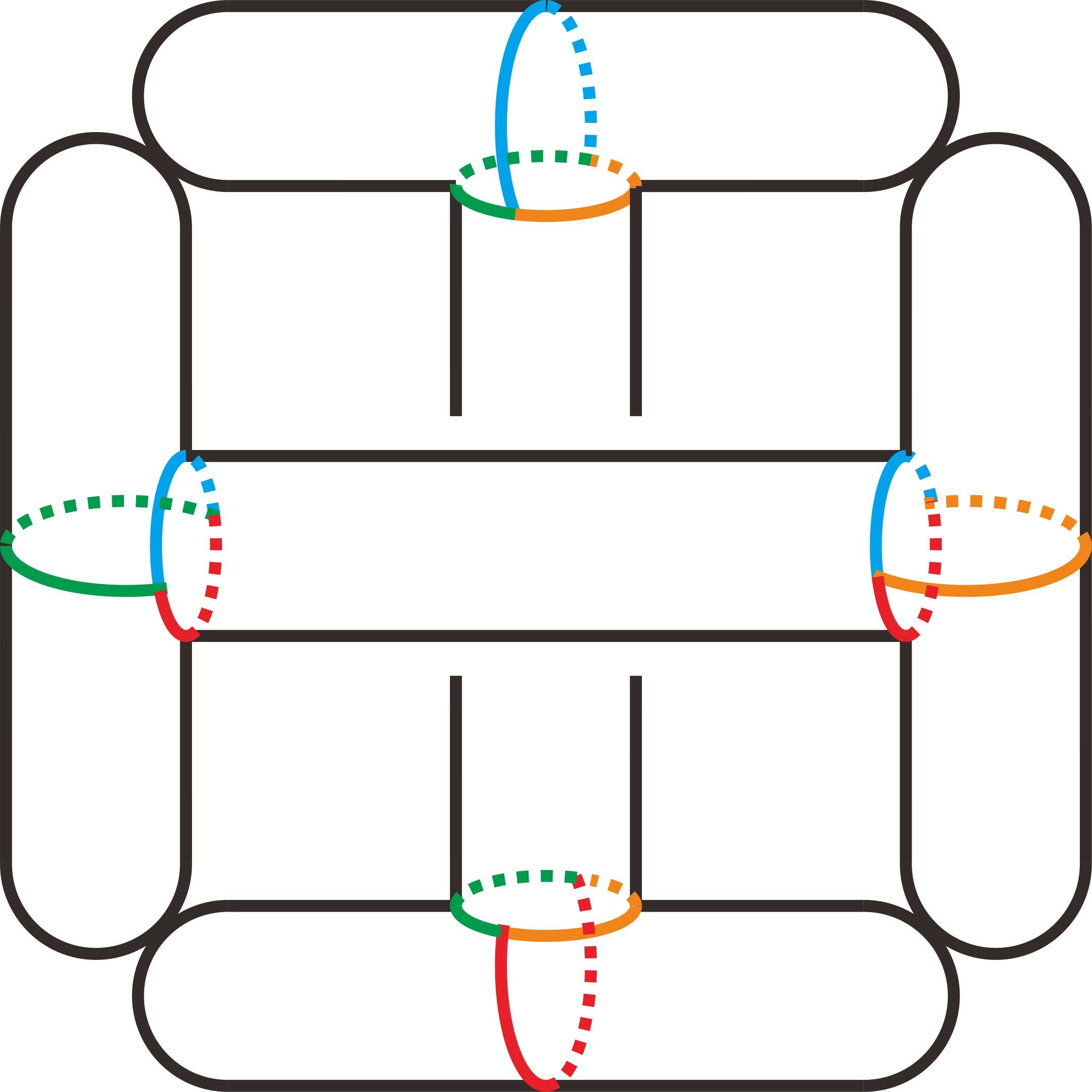}};

		\node at (4.1,2.0){\Large $B_{21}$};
		
		\node at (4.1,-2.0){\Large $B_{23}$};
		
		\node at (2,-4.1){\Large $B_{32}$};
		\node at (-2,-4.1){\Large $B_{34}$};
		\node at (-2,4.1){\Large $B_{14}$};
		\node at (2,4.1){\Large $B_{12}$};

		\node at (-4.1,-2.0){\Large $B_{43}$};
		\node at (-4.1,2.0){\Large $B_{41}$};

		\node at (-0,0.0){\Large $B_{24}$};
		\node at (0,2.0){\Large $B_{13}$};

		\end{tikzpicture}
	\end{center}
	\caption{The abstract picture of the boundary of  Dirichlet domain of $\rho_{\frac{5 \pi}{6}}(K) <\mathbf{PU}(2,1)$. What we draw is in fact $D_{R} \cap \partial {\bf H}^{2}_{\mathbb C}$. Here the once-punctured disk labeled by $B_{ij}$ for $j=i\pm 1$ mod 4  is in fact $B_{ij} \cap D_{R} \cap \partial {\bf H}^{2}_{\mathbb C}$. The annulus labeled by $B_{13}$ (and $B_{24}$ respectively)
		is in fact $B_{13} \cap D_{R}\cap \partial {\bf H}^{2}_{\mathbb C}$ (and $B_{24}\cap D_{R} \cap \partial {\bf H}^{2}_{\mathbb C}$ respectively). 
		}
	\label{figure:22infty2dimdirichletabstract}
\end{figure}
 
\subsection{Using the  Poincar\'e polyhedron theorem}\label{subsec:poincare2dim}
With the  preliminaries in Subsection \ref{subsec:2dimintersection}, we have the  side pairing maps of $D_{R}$ as follows:
\begin{itemize}
	
	\item   $I_2I_1: s_{12} \rightarrow s_{21}$;
	
		\item   $I_3I_2: s_{23} \rightarrow s_{32}$;
		
			\item   $I_4I_3: s_{34} \rightarrow s_{43}$;
			
				\item   $I_1I_4: s_{41} \rightarrow s_{14}$;
				
		\item   $I_1I_3: s_{13} \rightarrow s_{13}$;
		
			\item   $I_2I_4: s_{24} \rightarrow s_{24}$.
	
\end{itemize}

We have 

\begin{prop}\label{prop:sidepairI2I1}
	
	$I_2I_1$  is a homeomorphism from  $s_{12}$ to $s_{21}$:
	\begin{enumerate}
		
		\item \label{item:I2I11214}

		$I_2I_1$ sends the ridge $s_{12} \cap s_{14}$
		to  the ridge $s_{21} \cap s_{24}$;	
		\item   \label{item:I2I11213} $I_2I_1$ sends the ridge $s_{12} \cap s_{13}$
		to  the ridge $s_{21} \cap s_{23}$.			
	\end{enumerate}

\end{prop}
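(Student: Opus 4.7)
The plan is to combine the isometry property of $I_2I_1$ with direct point computations and the intersection patterns established in Propositions \ref{prop:B12}--\ref{prop:B12intersection}. First I compute the images of the relevant centers: by definition $I_2I_1(p_0)=p_{21}$ and $I_2I_1(p_{12})=(I_2I_1)(I_1I_2)(p_0)=p_0$, so since isometries send bisectors to bisectors, $I_2I_1$ sends $B_{12}=B(p_0,p_{12})$ to $B(p_{21},p_0)=B_{21}$ setwise. For the two ridges, I compute
\[
I_2I_1(p_{13})=I_2I_1\cdot I_1I_3(p_0)=I_2I_3(p_0)=p_{23},\qquad I_2I_1(p_{14})=I_2I_1\cdot I_1I_4(p_0)=I_2I_4(p_0)=p_{24}.
\]
Consequently $I_2I_1$ maps the equidistant locus from $\{p_0,p_{12},p_{13}\}$ to that from $\{p_{21},p_0,p_{23}\}$, i.e.\ it carries the Giraud disk $B_{12}\cap B_{13}$ to $B_{21}\cap B_{23}$, and similarly $B_{12}\cap B_{14}$ to $B_{21}\cap B_{24}$.

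Next I would upgrade these containments from bisector intersections to the corresponding ridges of $D_R$. The side $s_{12}$ is cut out of $B_{12}$ by the inequalities $|\langle\mathbf{x},\mathbf{p_0}\rangle|\le|\langle\mathbf{x},g(\mathbf{p_0})\rangle|$ for $g\in R$. Pulling these back under $I_2I_1$ transforms each such inequality into $|\langle \mathbf{y},\mathbf{p_{21}}\rangle|\le|\langle \mathbf{y},(I_2I_1)g(\mathbf{p_0})\rangle|$. I would check by direct enumeration (reduced via the $\langle J\rangle=\mathbb{Z}_4$ symmetry) that $(I_2I_1)\cdot R(p_0)\subset R(p_0)\cup\{p_0\}$, so the defining inequalities of $s_{12}$ are transported exactly to those of $s_{21}$. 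Applying the inverse $I_1I_2$ in the same manner gives the reverse inclusion, hence $I_2I_1\colon s_{12}\to s_{21}$ is a bijective isometry, and therefore a homeomorphism. Items (\ref{item:I2I11214}) and (\ref{item:I2I11213}) then follow: the image of $s_{12}\cap s_{14}$ is contained in $B_{21}\cap B_{24}$ and lies in $D_R$, so it is exactly $s_{21}\cap s_{24}$; similarly for $s_{12}\cap s_{13}\mapsto s_{21}\cap s_{23}$.

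The routine obstacle is the bookkeeping in verifying $(I_2I_1)\cdot R(p_0)\subset R(p_0)\cup\{p_0\}$, which is essentially a finite check on ten elements; the only slightly nontrivial case is the image of $I_1I_3(p_0)$ and $I_2I_4(p_0)$, where one must observe $I_2I_1\cdot I_2I_4(p_0)=I_2I_1I_2I_4(p_0)$ lies in the $J$-orbit of the already-considered centers, so the corresponding bisector inequality is implied by one in the defining set of $s_{21}$. Once this is in hand the proposition follows immediately, and the analogous statements for the other three side-pairings $I_3I_2$, $I_4I_3$, $I_1I_4$ are obtained by applying $J$-conjugation to the above argument.
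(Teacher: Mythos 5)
Your central computation is exactly the one the paper uses: $I_2I_1$ sends the ordered triple $(p_0,p_{12},p_{14})$ to $(p_{21},p_0,p_{24})$ and $(p_0,p_{12},p_{13})$ to $(p_{21},p_0,p_{23})$, hence carries the equidistant locus $B_{12}\cap B_{14}$ to $B_{21}\cap B_{24}$ and $B_{12}\cap B_{13}$ to $B_{21}\cap B_{23}$; the paper's proof of this proposition consists of precisely this observation and nothing more. So the part of your argument that establishes items (1) and (2) agrees with the paper.

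The extra step you add, however, rests on a false claim. You propose to verify $(I_2I_1)\cdot R(p_0)\subset R(p_0)\cup\{p_0\}$ so that the ten defining inequalities of $D_R$ are permuted and $s_{12}$ is carried exactly onto $s_{21}$. This inclusion fails: for example $I_2I_1(p_{21})=(I_2I_1)^2(p_0)=A_1^{-2}(p_0)$, and $A_1^{-2}\notin R$ (nor does $A_1^{-2}(p_0)$ coincide with any $g(p_0)$, $g\in R$, since the group elements involved are distinct and act freely on the orbit of $p_0$); similarly $I_2I_1(p_{34})=A_1^{-1}A_3(p_0)$ is a new orbit point. The set $R$ is simply not closed under left multiplication by $A_1^{-1}$ modulo the identity, so the defining inequalities of the \emph{partial} Dirichlet domain are not transported into one another --- that mechanism is only automatic for the full Dirichlet domain $D_\Gamma$, where every $gh$ with $g,h\in\Gamma$ again contributes an inequality. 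The way the paper (and the standard Poincar\'e-polyhedron framework it cites) gets around this is to prove only the ridge correspondences via the triples of points, combine them with the separately established facet structure of $s_{12}$ and $s_{21}$ (Propositions \ref{prop:s12} and \ref{prop:s13}, whose frontiers consist exactly of the two ridges glued along $B_{12}\cap B_{13}\cap B_{14}$), and leave the local tiling to the Poincar\'e polyhedron theorem; no wholesale transport of the ten inequalities is needed or true. You should either drop the finite check and argue as the paper does, or replace it with the weaker (and correct) statement that for $y\in I_2I_1(s_{12})\cap D_R$ one has $|\langle \mathbf{y},\mathbf{p_0}\rangle|=|\langle \mathbf{y},\mathbf{p_{21}}\rangle|$, which places the overlap on $B_{21}$ without claiming the inequality systems coincide.
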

\begin{proof}

The ridge $s_{12} \cap s_{14}$	is
defined by the triple equality
$$|\langle {\bf z}, {\bf p_{0}}\rangle| =|\langle {\bf z}, p_{12}\rangle|=|\langle {\bf z},{\bf p_{14}}\rangle|$$
with ${\bf z} \in \mathbb{C}^{3,1}$.
From 	$I_2I_1$'s action  on the set 	$$\{p_0, p_{12},  p_{14}\},$$ we get the set
$$\{p_{21}, p_0,  p_{24}\}.$$
So  $I_2I_1$ maps  $s_{12} \cap s_{14}$ to  $s_{21} \cap s_{24}$.  

The proof of (\ref{item:I2I11213}) of Proposition \ref{prop:sidepairI2I1} is similar.
\end{proof}

Similarly, we have 
\begin{prop}\label{prop:I1I3}
	
	The side pairing map  $I_1I_3$    is a self-homeomorphism of $s_{13}$:
	\begin{enumerate}
		
		\item \label{item:Cridge1}  
		
			$I_1I_3$ exchanges  the ridges $s_{13} \cap s_{12}$
		and    $s_{13} \cap s_{32}$;
		\item \label{item:Cridge1} 
		$I_1I_3$ exchanges  the ridges $s_{13} \cap s_{14}$
		and    $s_{13} \cap s_{34}$.			
	\end{enumerate}

\end{prop}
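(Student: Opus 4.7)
The plan is to exploit the involutive nature of $I_1I_3$ together with the defining Coxeter relations to track the four adjacent ridges explicitly. The geometric core of the argument has already been done: by Proposition \ref{prop:s13} the side $s_{13}$ is a topological 3-ball whose frontier in ${\bf H}^2_{\mathbb C}$ is a 2-sphere assembled from the four half-disks $s_{13}\cap s_{12}$, $s_{13}\cap s_{14}$, $s_{13}\cap s_{32}$, $s_{13}\cap s_{34}$. So all that remains is to verify that $I_1I_3$ is an isometric involution of the bisector $B_{13}$ which permutes these four adjacent sides in the prescribed way.

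First I would record the algebraic consequence of the relation $(I_1I_3)^2 = \mathrm{id}$: namely $I_1I_3 = (I_1I_3)^{-1} = I_3I_1$. In particular $I_1I_3$ swaps the two centers $p_0$ and $p_{13}=I_1I_3(p_0)$, and therefore preserves the bisector $B_{13}=B(p_0,p_{13})$ setwise.

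Next, for the ridge exchange, I would invoke the characterization of a ridge $s_{13}\cap s_{ij}$ as the locus in $B_{13}$ of points equidistant from the triple $\{p_0,p_{13},p_{ij}\}$. Applying $I_1I_3$ and cancelling with $I_1I_3 = I_3I_1$, I would compute
\begin{equation*}
I_1I_3(p_{12}) \;=\; I_3I_1\cdot I_1I_2(p_0) \;=\; I_3I_2(p_0) \;=\; p_{32},
\end{equation*}
so the triple defining $s_{13}\cap s_{12}$ is carried to the triple defining $s_{13}\cap s_{32}$, proving (1). An identical computation
\begin{equation*}
I_1I_3(p_{14}) \;=\; I_3I_1\cdot I_1I_4(p_0) \;=\; I_3I_4(p_0) \;=\; p_{34}
\end{equation*}
proves (2). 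Because $I_1I_3$ is an isometry that permutes the four boundary half-disks among themselves and fixes $B_{13}$ setwise, it must carry the enclosed 3-ball $s_{13}$ to itself homeomorphically.

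The only mild subtlety I anticipate is making sure the image of $s_{13}$ under $I_1I_3$ is really $s_{13}$ and not some other piece of $B_{13}$ lying outside $D_{R}$. This should follow formally once we combine the ridge exchange above with the fact that $s_{13}$ is the unique component of $B_{13}\cap D_R$ cut out by those four ridges; alternatively, since $I_1I_3\in\rho_{5\pi/6}(K)$ and exchanges the open half-spaces bounded by $B_{13}$ about $p_0$, it sends the Dirichlet tile based at $p_0$ to the neighbouring Dirichlet tile based at $p_{13}$, and these two tiles share exactly the face $s_{13}$. So the main content of the proof is the two short word-reduction computations above, with the surrounding geometric statements already packaged in Propositions \ref{prop:B12}–\ref{prop:B12intersection} and Proposition \ref{prop:s13}.
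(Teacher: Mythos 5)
Your proposal is correct and follows essentially the same route as the paper: the paper proves the analogous Proposition \ref{prop:sidepairI2I1} by observing that a ridge is the triple-equidistance locus of $\{p_0,p_{ij},p_{kl}\}$ and checking how the side-pairing map permutes these defining triples, then states Proposition \ref{prop:I1I3} as "similar." Your word-reduction computations $I_1I_3(p_{12})=p_{32}$ and $I_1I_3(p_{14})=p_{34}$ (using $I_1I_3=I_3I_1$ from the order-two relation) are exactly the intended verification, and your closing remark about why the image of $s_{13}$ is again $s_{13}$ only makes explicit a point the paper leaves implicit.
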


\begin{prop}\label{prop:sidepairI3I2}
	
	$I_3I_2$  is a homeomorphism from  $s_{23}$ to $s_{32}$:
	\begin{enumerate}
		
		\item \label{item:I2I11214}

		$I_3I_2$ sends the ridge $s_{23} \cap s_{24}$
		to  the ridge $s_{32} \cap s_{34}$;	
		\item   \label{item:I2I11213} $I_3I_2$ sends the ridge $s_{21} \cap s_{23}$
		to  the ridge $s_{31} \cap s_{32}$.			
	\end{enumerate}

\end{prop}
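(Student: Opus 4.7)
The plan is to deduce Proposition \ref{prop:sidepairI3I2} from Proposition \ref{prop:sidepairI2I1} using the $\mathbb{Z}_4$-symmetry generated by $J$. The key observation is that $JI_iJ^{-1} = I_{i+1}$ (indices mod $4$), so conjugating the side-pairing $I_2I_1$ by $J$ gives precisely $I_3I_2$. Since $J$ fixes $p_0$ and is an isometry of ${\bf H}^2_{\mathbb C}$ preserving $D_{R}$, it sends $p_{ij} = I_iI_j(p_0)$ to $p_{(i+1)(j+1)}$, hence sends the bisector $B_{ij}$ to $B_{(i+1)(j+1)}$ and the side $s_{ij}$ to $s_{(i+1)(j+1)}$.

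First I would record the commutation relation $I_3I_2 \circ J = J \circ I_2I_1$ on ${\bf H}^2_{\mathbb C}$. Combined with $J(s_{12}) = s_{23}$ and $J(s_{21}) = s_{32}$, this shows that $I_3I_2$ restricts to a homeomorphism $s_{23} \to s_{32}$, because Proposition \ref{prop:sidepairI2I1} gives the analogous statement for $I_2I_1$ on $s_{12}$ and $J$ is a global homeomorphism.

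For the ridge assertions, the plan is to push the ridge-mapping statements of Proposition \ref{prop:sidepairI2I1} through $J$. Using $J(s_{14}) = s_{21}$, $J(s_{13}) = s_{24}$, $J(s_{23}) = s_{34}$, and $J(s_{24}) = s_{31}$, together with the identification $s_{31} = s_{13}$ (which arises because $(I_1I_3)^2 = \mathrm{id}$ forces $p_{31} = p_{13}$ and hence $B_{31} = B_{13}$), applying $J$ to the two ridge images given in Proposition \ref{prop:sidepairI2I1} yields exactly the two claims here.

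A direct verification, mirroring the proof of Proposition \ref{prop:sidepairI2I1}, is available as a backup: each ridge $s_{ij}\cap s_{ik}$ is cut out by the triple equality $|\langle \mathbf{z}, \mathbf{p_0}\rangle| = |\langle \mathbf{z}, \mathbf{p_{ij}}\rangle| = |\langle \mathbf{z}, \mathbf{p_{ik}}\rangle|$, so it suffices to check that $I_3I_2$ carries the triple $\{p_0, p_{23}, p_{24}\}$ to $\{p_{32}, p_0, p_{34}\}$ and the triple $\{p_0, p_{21}, p_{23}\}$ to $\{p_{32}, p_{31}, p_0\}$. The only routine bookkeeping is the chain $I_3I_2 \cdot I_2I_1(p_0) = I_3I_1(p_0) = p_{13}$, which again uses $(I_1I_3)^2 = \mathrm{id}$. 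There is no substantive obstacle in this proposition; it is a direct consequence of the $\mathbb{Z}_4$-symmetry already exploited throughout Section \ref{sec:Dirich2dim}.
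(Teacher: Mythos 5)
Your proposal is correct and matches the paper's treatment: the paper proves Proposition \ref{prop:sidepairI2I1} by checking how the side-pairing map permutes the triples of points $\{p_0, p_{ij}, p_{ik}\}$ cutting out each ridge, and then dismisses the remaining side-pairing propositions (including this one) with ``Similarly, we have'' --- your direct-verification backup is exactly that computation, with the correct bookkeeping $I_3I_2(p_{21}) = I_3I_1(p_0) = p_{31} = p_{13}$. Your primary route via conjugation by $J$ (using $I_3I_2 = J(I_2I_1)J^{-1}$, $J(p_0)=p_0$, and the $J$-invariance of $R$ and hence of $D_R$) is the same $\mathbb{Z}_4$-symmetry the paper invokes throughout Section \ref{sec:Dirich2dim}, so there is no substantive difference in approach.
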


\begin{prop}\label{prop:sidepairI4I3}
	
	$I_4I_3$  is a homeomorphism from  $s_{34}$ to $s_{43}$:
	\begin{enumerate}
		
		\item \label{item:I2I11214}

		$I_4I_3$ sends the ridge $s_{34} \cap s_{32}$
		to  the ridge $s_{24} \cap s_{43}$;	
		\item   \label{item:I2I11213} $I_4I_3$ sends the ridge $s_{34} \cap s_{13}$
		to the ridge $s_{43} \cap s_{41}$.			
	\end{enumerate}

\end{prop}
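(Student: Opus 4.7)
The plan is to follow the same template used in Propositions~\ref{prop:sidepairI2I1}, \ref{prop:I1I3}, and \ref{prop:sidepairI3I2}: each ridge is defined by a triple equality of moduli of Hermitian products against three specific points, so establishing that a side-pairing isometry $g$ sends one ridge to another amounts to showing that $g$ maps the triple of defining points to the triple defining the target ridge. In our case, the two ridges of $s_{34}$ that must be tracked are $s_{34}\cap s_{32}$ and $s_{34}\cap s_{13}$, determined respectively by the triples $\{p_0,p_{34},p_{32}\}$ and $\{p_0,p_{34},p_{13}\}$, and similarly the targets $s_{43}\cap s_{24}$ and $s_{43}\cap s_{41}$ are determined by $\{p_0,p_{43},p_{24}\}$ and $\{p_0,p_{43},p_{41}\}$.

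First, I would check that $I_4I_3$ maps $s_{34}$ to $s_{43}$ as a whole: by construction $I_4I_3(p_{34})=I_4I_3\cdot I_3I_4(p_0)=p_0$ (using $I_3^2=I_4^2=id$), so $I_4I_3$ swaps the defining pair $\{p_0,p_{34}\}$ with $\{p_{43},p_0\}$ and hence sends $B_{34}$ to $B_{43}$ and their half-spaces compatibly, which gives the bijection of sides after combining with the combinatorial description of $s_{34}$ obtained from the analogue of Proposition~\ref{prop:s12} (valid here by the $\langle J\rangle=\mathbb{Z}_4$ symmetry).

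For the first ridge, I compute $I_4I_3(p_{32})=I_4I_3\cdot I_3I_2(p_0)=I_4I_2(p_0)$; using the relation $(I_2I_4)^2=id$ one has $I_4I_2=I_2I_4$, so $I_4I_3(p_{32})=p_{24}$. Thus the triple $\{p_0,p_{34},p_{32}\}$ is sent bijectively to $\{p_{43},p_0,p_{24}\}$, proving that $I_4I_3$ carries the ridge $s_{34}\cap s_{32}$ onto $s_{43}\cap s_{24}$. For the second ridge, using $(I_1I_3)^2=id$ to rewrite $I_1I_3=I_3I_1$, I obtain $I_4I_3(p_{13})=I_4I_3\cdot I_3I_1(p_0)=I_4I_1(p_0)=p_{41}$, so $\{p_0,p_{34},p_{13}\}$ is sent to $\{p_{43},p_0,p_{41}\}$, which identifies $s_{34}\cap s_{13}$ with $s_{43}\cap s_{41}$.

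There is no real obstacle here since the two involutive relations $(I_1I_3)^2=(I_2I_4)^2=id$ already present in $G$ make the needed words collapse immediately. The only thing one has to verify separately (and it is entirely analogous to what was done for $s_{12}$) is that $I_4I_3$ is actually a homeomorphism of the 3-sides---not just of their combinatorial skeletons---which follows from the fact that $I_4I_3$ is an isometry exchanging the centers $p_0$ and $p_{43}$ of the equidistant half-spaces bounding $s_{34}$ and $s_{43}$, together with the bijection on the pieces of the frontier established above.
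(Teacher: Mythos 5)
Your proposal is correct and follows essentially the same route as the paper: the paper proves the analogous Proposition for $I_2I_1$ by noting each ridge is the locus of a triple equality of Hermitian moduli against $\{p_0,p_{ij},p_{kl}\}$ and then tracking the image of that triple, declaring the remaining side-pairing propositions (including this one) "similar." Your explicit word reductions $I_4I_3(p_{32})=I_4I_2(p_0)=I_2I_4(p_0)=p_{24}$ via $(I_2I_4)^2=id$ and $I_4I_3(p_{13})=I_4I_1(p_0)=p_{41}$ via $(I_1I_3)^2=id$ are exactly the computations the paper leaves implicit.
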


\begin{prop}\label{prop:sidepairI4I3}
	
	$I_1I_4$  is a homeomorphism from  $s_{41}$ to $s_{14}$:
	\begin{enumerate}
		
		\item \label{item:I2I11214}

		$I_1I_4$ sends the ridge $s_{41} \cap s_{24}$
		to  the ridge $s_{14} \cap s_{12}$;	
		\item   \label{item:I2I11213} $I_1I_4$ sends the ridge $s_{41} \cap s_{43}$
		to  the ridge $s_{14} \cap s_{13}$.			
	\end{enumerate}

\end{prop}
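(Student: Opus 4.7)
The plan is to follow the exact template used for Proposition \ref{prop:sidepairI2I1}: each ridge $s_{ij}\cap s_{kl}$ is cut out of $B_{ij}\cap B_{kl}$ by the triple equality $|\langle \mathbf{z},\mathbf{p_0}\rangle|=|\langle \mathbf{z},\mathbf{p_{ij}}\rangle|=|\langle \mathbf{z},\mathbf{p_{kl}}\rangle|$, so any element of $\mathbf{PU}(3,1)$ sends such a ridge to the ridge defined by the image of the point set $\{p_0,p_{ij},p_{kl}\}$. Thus to prove both claims it suffices to track where $I_1I_4$ sends the two triples $\{p_0,p_{41},p_{24}\}$ and $\{p_0,p_{41},p_{43}\}$, and to check that the resulting triples cut out the ridges $s_{14}\cap s_{12}$ and $s_{14}\cap s_{13}$ respectively.

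The computations reduce to word manipulations in $K$ using the relations already established: $I_1I_4=A_4^{-1}$, $A_1A_2A_3A_4=id$ (so $A_4^{-1}=A_1A_2A_3$), $(A_2A_3)^2=id$, and $A_3A_4=I_3I_1=I_1I_3$. For part (1), $I_1I_4(p_0)=p_{14}$ and $I_1I_4(p_{41})=p_0$ are immediate, while
\begin{equation*}
I_1I_4(p_{24})=A_4^{-1}(A_2A_3)(p_0)=A_1A_2A_3A_2A_3(p_0)=A_1(p_0)=p_{12},
\end{equation*}
where the middle step uses $(A_2A_3)^2=id$. Hence the triple $\{p_0,p_{41},p_{24}\}$ maps to $\{p_{14},p_0,p_{12}\}$, whose defining triple equality is exactly $s_{14}\cap s_{12}$. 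For part (2), writing $p_{43}=A_3^{-1}(p_0)$, I compute
\begin{equation*}
I_1I_4(p_{43})=A_4^{-1}A_3^{-1}(p_0)=(A_3A_4)^{-1}(p_0)=(I_1I_3)^{-1}(p_0)=I_1I_3(p_0)=p_{13},
\end{equation*}
so the triple $\{p_0,p_{41},p_{43}\}$ maps to $\{p_{14},p_0,p_{13}\}$, which is the defining set of $s_{14}\cap s_{13}$.

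The only non-mechanical point is verifying that $I_1I_4$, as an element of $\mathbf{PU}(3,1)$ (projectively), actually restricts to a homeomorphism from the 3-side $s_{41}$ to the 3-side $s_{14}$, rather than just matching the underlying bisectors. This is automatic from the fact that $I_1I_4$ sends the ordered pair $(p_0,p_{41})$ to $(p_{14},p_0)$, so it sends the bisector $B_{41}=B(p_0,p_{41})$ to $B_{14}=B(p_{14},p_0)=B(p_0,p_{14})$ isometrically and exchanges the two half-spaces; therefore it carries the full inequality system defining $D_R$ centered at $p_0$ to the corresponding system centered at $p_{14}$, and the two ridge computations above identify the boundary of $s_{41}$ with that of $s_{14}$ via the $\mathbb{Z}_4$-symmetry displayed in Propositions \ref{prop:s12} and \ref{prop:s13}. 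No computation is genuinely hard here; the main care needed is just the bookkeeping with the relations in $K$, which is essentially the same as in the already-established cases.
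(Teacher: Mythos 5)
Your proposal is correct and follows essentially the same route as the paper: the paper proves the analogous Proposition for $I_2I_1$ by tracking the image of the defining point triple of each ridge, and then states the remaining side-pairing propositions (including this one) with "similarly"; your word computations $I_1I_4(p_{24})=p_{12}$ and $I_1I_4(p_{43})=p_{13}$ are exactly the omitted verifications and they check out. The extra remarks on why $I_1I_4$ gives a homeomorphism of sides are consistent with (and no less rigorous than) what the paper does.
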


\begin{prop}\label{prop:I2I4}
	
	The side pairing map  $I_2I_4$    is a self-homeomorphism of $s_{24}$:
	\begin{enumerate}
		
		\item \label{item:Cridge1}  
		
		$I_2I_4$ exchanges  the ridges $s_{24} \cap s_{21}$
		and    $s_{24} \cap s_{41}$;
		\item \label{item:Cridge1} 
		$I_2I_4$ exchanges  the ridges $s_{24} \cap s_{23}$
		and    $s_{24} \cap s_{43}$.			
	\end{enumerate}

\end{prop}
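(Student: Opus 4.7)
The proof should proceed in direct parallel with that of Proposition \ref{prop:I1I3}, so the plan is to reduce everything to tracking the action of $I_2I_4$ on the point-triples that define each ridge. First I would record the key algebraic fact: since $(I_2I_4)^2 = id$, the element $I_2I_4$ has order two, so it is its own inverse, and expanding $I_2I_4I_2I_4 = id$ yields the commutation relation $I_2I_4 = I_4I_2$. This already tells us that $I_2I_4$ swaps $p_0$ and $p_{24} = I_2I_4(p_0)$, and hence preserves the bisector $B_{24} = B(p_0, p_{24})$ setwise. Combined with the fact that $I_2I_4 \in \rho_{\frac{5\pi}{6}}(K)$ is an isometry of $ {\bf H}^2_{\mathbb C}$, it therefore permutes the set of bisectors $\{B(p_0, g(p_0)) : g \in R\}$ appearing in $D_R$, and restricts to a self-homeomorphism of $s_{24} = B_{24} \cap D_R$.

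Next I would verify the two ridge exchanges by computing the image of each defining triple. The ridge $s_{24}\cap s_{21}$ is the locus of points $z \in {\bf H}^2_{\mathbb C}$ with $|\langle \mathbf{z},\mathbf{p_0}\rangle| = |\langle \mathbf{z},\mathbf{p_{24}}\rangle| = |\langle \mathbf{z},\mathbf{p_{21}}\rangle|$, and such a ridge is characterized by the unordered triple $\{p_0, p_{24}, p_{21}\}$. Applying $I_2I_4$ and using the commutation $I_2I_4 = I_4I_2$, I get
\[
I_2I_4(p_0) = p_{24}, \quad I_2I_4(p_{24}) = p_0, \quad I_2I_4(p_{21}) = I_2I_4 I_2I_1(p_0) = I_4I_1(p_0) = p_{41},
\]
so the image triple is $\{p_{24}, p_0, p_{41}\}$, which defines exactly $s_{24}\cap s_{41}$. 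Because $I_2I_4$ has order two, this gives an exchange. The second ridge exchange is entirely analogous: $I_2I_4(p_{23}) = I_2I_4 I_2I_3(p_0) = I_4I_3(p_0) = p_{43}$, and conversely $I_2I_4(p_{43}) = I_2I_3(p_0) = p_{23}$, so the triple $\{p_0, p_{24}, p_{23}\}$ is mapped to $\{p_{24}, p_0, p_{43}\}$.

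The only potential obstacle is making sure that the image of a ridge really is a ridge of the same partial Dirichlet domain $D_R$, i.e.\ that $I_2I_4$ does not send an interior point of $s_{24}$ to a point cut off by some other bisector $B_{ij}$ with $I_iI_j \in R$. This is not automatic from the triple-of-points argument alone, but it is already guaranteed by the intersection table in Subsection \ref{subsec:2dimintersection}: among the bisectors in $R$, the only ones meeting $B_{24}$ are $B_{21}, B_{23}, B_{41}, B_{43}$ (together with the obvious $B_{24}$), by Proposition \ref{prop:B13} and the $\langle J\rangle$-symmetry which conjugates the calculations for $B_{12}$ into those for $B_{24}$. Hence $s_{24}$ has exactly the four ridges $s_{24}\cap s_{21}, s_{24}\cap s_{23}, s_{24}\cap s_{41}, s_{24}\cap s_{43}$, and the two exchanges above account for all of them. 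Thus the isometry $I_2I_4$ sends $s_{24}$ homeomorphically to itself, swapping the ridges as claimed.
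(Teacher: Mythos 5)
Your proof is correct and takes essentially the same route as the paper, which proves only Proposition \ref{prop:sidepairI2I1} explicitly and disposes of this one with ``similarly'': one uses $(I_2I_4)^2=id$ and tracks the action on the defining triples of points, e.g.\ $\{p_0,p_{24},p_{21}\}\mapsto\{p_{24},p_0,p_{41}\}$, together with the intersection table to see that these four ridges are the only ones on $s_{24}$. Two incidental remarks in your write-up are inaccurate but unused: $I_2I_4$ does not literally permute the set $\{B(p_0,g(p_0)):g\in R\}$ (it sends $B(p_0,p_{21})$ to $B(p_{24},p_{41})$, which is not of that form), and the $J$-symmetry relevant to $B_{24}$ conjugates the computations for $B_{13}$ (Proposition \ref{prop:B13}), not those for $B_{12}$.
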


%We also has similar properties of side-pairing maps $I_3I_2$,   $I_4I_3$,  $I_1I_4$ and $I_2I_4$.
{\bf Proof of Theorem  \ref{thm:2dimdirichlet}.} After above propositions,  we prove the tessellation around the sides and ridges of the partial Dirichlet domain $D_{R}$.

%{\bf Local tessellation.}  

First,  $I_1I_3$  is a self-homeomorphism of $s_{13}$, and  $I_1I_3$
exchanges the two components  of ${\bf H}^{2}_{\mathbb C}-B_{13}$. Then 
$D_{R}$ and $I_1I_3(D_{R})$
have disjoint interiors,  and they together cover a neighborhood  of
each point in the interior of the side $s_{13}$.  The cases of the other 3-sides are similar.

Secondly,  we consider tessellations about ridges. Recall that $A_1=I_1I_2$, $A_2=I_2I_3$, $A_3=I_3I_4$ and $A_4=I_4I_1$. 

(1).  For the ridge  $s_{14}\cap s_{12}$,  the ridge circle is $$s_{14}\cap s_{12} \xrightarrow{A^{-1}_1}s_{21}\cap s_{24}\xrightarrow{(A_2A_3)^{-1}}s_{24}\cap s_{41}\xrightarrow{A^{-1}_4}s_{14}\cap s_{12}.$$
Which gives the relation $A^{-1}_4 \cdot A^{-1}_3A^{-1}_2 \cdot A^{-1}_1=id$.  By a standard argument as in \cite{ParkerWill:2017}, we have $D_{R} \cup A_1(D_{R}) \cup A^{-1}_4(D_{R})$ covers a small neighborhood of  $s_{14}\cap s_{12}$.

(2).   For the ridge  $s_{13}\cap s_{14}$,  the ridge circle is $$s_{13}\cap s_{14} \xrightarrow{A_4}s_{41}\cap s_{43}\xrightarrow{A_3}s_{34}\cap s_{31}\xrightarrow{A_1A_2}s_{13}\cap s_{14}.$$
Which gives the relation $A_1A_2 \cdot A_3\cdot A_4=id$.   By a standard argument as above $D_{R} \cup A^{-1}_4(D_{R}) \cup (A_1A_2)^{-1}(D_{R})$  covers a small neighborhood of  $s_{13}\cap s_{14}$.

(3).   For the ridge  $s_{13}\cap s_{12}$,  the ridge circle is $$s_{13}\cap s_{12} \xrightarrow{A^{-1}_1}s_{21}\cap s_{23}\xrightarrow{A^{-1}_2}s_{32}\cap s_{31}\xrightarrow{A_1A_2}s_{13}\cap s_{12}.$$
Which gives the relation $A_1A_2\cdot A^{-1}_2\cdot A^{-1}_1=id$.  We have $D_{R} \cup A_1(D_{R}) \cup A_1A_2(D_{R})$  covers a small neighborhood of  $s_{13}\cap s_{12}$.

(4).   For the ridge  $s_{24}\cap s_{23}$,  the ridge circle is $$s_{24}\cap s_{23} \xrightarrow{A^{-1}_2}s_{32}\cap s_{34}\xrightarrow{A^{-1}_3}s_{43}\cap s_{42}\xrightarrow{A_2A_3}s_{24}\cap s_{23}.$$
Which gives the relation $A_2A_3\cdot A^{-1}_3\cdot A^{-1}_2=id$. Then   $D_{R} \cup A_2(D_{R}) \cup A_2A_3(D_{R})$  covers a small neighborhood of  $s_{13}\cap s_{12}$.

In Figure 	\ref{figure:22infty2dimdirichletabstract}, we take the three ridges in the same class as in $s_{14}\cap s_{12}$ by cyan colors (more precisely, the intersections of these ridges with  $\partial {\bf H}^2_{\mathbb C}$, so we have a set of  arcs). Similarly, we color other ridge classes by orange, red and green colors.

Since the identification  around $q_{12}\in \partial {\bf H}^2_{\mathbb C}$ is given by $A_1$, which is unipotent.   So  the identification space given by the side-pairing maps is complete at the ideal point $q_{12}$. By $\langle J \rangle$-symmetry,   the identification space of $D_{R}$ by these side pairing maps is complete.

By  Poincar\'e polyhedron theorem, the partial Dirichlet domain $D_{R}$ is in fact the Dirichlet domain
of $\rho_{\frac{5 \pi}{6}}(K) <\mathbf{PU}(2,1)$.
Now we have the presentation 
$$\rho_{\frac{5 \pi}{6}}(K)=\left\langle  A_1, A_2, A_3, A_4 \Bigg| \begin{matrix}     (A_1A_2)^2=   (A_2A_3)^2 =  A_1A_2  A_3A_4=id
\end{matrix}\right\rangle.$$
Note that with the relations  $$(A_1A_2)^2= id,~~~(A_2A_3)^2 =id,~~~  A_1A_2A_3A_4=id,$$ it is trivial to get $$(A_3A_4)^2= id, ~~~(A_4A_1)^2 =id.$$ So the relations above are also $\mathbb{Z}_4$-invariant, we have a discrete and  faithful representation of $K$ into $\mathbf{PU}(2,1)$. 
This ends the proof of Theorem \ref{thm:2dimdirichlet},  and  so the proof of  Theorem \ref{thm:complex3dim} when $\theta=\frac{5 \pi}{6}$.

\section{Dirichlet domain  of $\rho_{\pi}(K) <\mathbf{PO}(3,1)$  in  ${\bf H}^3_{\mathbb R}$} \label{sec:Dirich3dimreal}

In this section, we let $\theta=\pi$. Then   $\rho_{\pi}(K)$ preserves a totally geodesic ${\bf H}^{3}_{\mathbb R} \hookrightarrow {\bf H}^{3}_{\mathbb C}$ invariant. Even through the discreteness of $\rho_{\pi}(K)$ is trivial, but we consider the Dirichlet domain  of $\rho_{\pi}(K) <\mathbf{PO}(3,1)$  in  ${\bf H}^3_{\mathbb R}$, which is the baby case  for Section \ref{sec:complex3dim}.

%$$\left\{[{\bf x}=x_1 {\bf n_1}+x_2 {\bf n_2}+x_3 {\bf n_3} +x_4 {\bf n_4}] \in {\bf H}^3_{\mathbb %C}~~|~~x_i \in \mathbb{R}, ~~{ \bf x}^{*} \cdot H \cdot {\bf x} <0\right\}.$$

\subsection{$\rho_{\pi}(K)$-invariant  totally geodesic ${\bf H}^{3}_{\mathbb R} \hookrightarrow {\bf H}^{3}_{\mathbb C}$}\label{subsec:totallygeodesic}
Since the  $\rho_{\pi}(K)$-invariant  totally geodesic ${\bf H}^{3}_{\mathbb R} \hookrightarrow {\bf H}^{3}_{\mathbb C}$ is not the obvious one in ${\bf H}^{3}_{\mathbb C}$, we first describe it. 
When $\theta = \pi$, as the notations in Subsection \ref{subsec:gram},  we  have 
\begin{equation}\label{n1n2real}
n_1=\left[\begin{matrix} \frac{\sqrt{3}}{2}, \frac{1}{2}, \frac{1}{2}, \frac{1}{2} \end{matrix}\right]^{t}
,~~n_2= \left[\begin{matrix} -\frac{\sqrt{3}}{2}, \frac{\rm{i}}{2} , -\frac{\rm{i}}{2},\frac{1}{2} \end{matrix}\right]^{t},
\end{equation}
and
\begin{equation}\label{n1n2real}
n_3=\left[\begin{matrix} \frac{\sqrt{3}}{2}, -\frac{1}{2}, -\frac{1}{2}, \frac{1}{2} \end{matrix}\right]^{t}
,~~n_4= \left[\begin{matrix} -\frac{\sqrt{3}}{2}, -\frac{\rm{i}}{2}, \frac{\rm{i}}{2},\frac{1}{2} \end{matrix}\right]^{t}.
\end{equation}
We fix lifts  $\bf{n_{i}}$ of  $n_{i}$ as vectors in $\mathbb{C}^{3,1}$,  such that the entries of  $\bf{n_{i}}$ are just the same as  entries of  $n_{i}$ above. 
Let $\mathcal{L}$ be the intersection of the  real span of $\bf{ n_1},\bf{ n_2},\bf{ n_3},\bf{ n_4}$ and ${\bf H}^{3}_{\mathbb C}$, that is $\mathcal{L}$ is the set 
$$\left\{[{\bf x}=x_1 {\bf n_1}+x_2 {\bf n_2}+x_3 {\bf n_3} +x_4 {\bf n_4}] \in {\bf H}^3_{\mathbb C}~~|~~x_i \in \mathbb{R}\right\}.$$
Since $J(n_i)=n_{i+1}$ mod $4$, for 
$$x=x_1n_1+x_2n_2+x_3n_3+x_4n_4 \in \mathcal{L},$$ $$J(x)=x_4n_1+x_1n_2+x_2n_3+x_3n_4 \in \mathcal{L}.$$
So 
\begin{equation}\label{matrix:matrixrealI1}
J_{\mathcal{L}}=
\begin{pmatrix}
0& 0& 0&1\\
1& 0 & 0& 0\\
0&1&0 &0\\
0 & 0&1&0\\
\end{pmatrix}\end{equation}
is the matrix representation of $J$ on $\mathcal{L}$ with basis $\{n_1, n_2, n_3, n_4\}$.
The fix point of $J$ in $\mathcal{L}$  is $$x_{0}=n_1+n_2+n_3+n_4,$$ that is, we may take $x_1=x_2=x_3=x_4=\frac{1}{2}$.

Now
\begin{equation}
I_1=\left(\begin{array}{cccc}
\frac{1}{2}& \frac{\sqrt{3}}{2}&  \frac{\sqrt{3}}{2}&- \frac{\sqrt{3}}{2}\\[ 3 pt]
\frac{\sqrt{3}}{2}& -\frac{1}{2} & \frac{1}{2}& -\frac{1}{2}\\[ 3 pt]
\frac{\sqrt{3}}{2}&\frac{1}{2}&-\frac{1}{2} &-\frac{1}{2}\\[ 3 pt]
\frac{\sqrt{3}}{2} & \frac{1}{2}&\frac{1}{2}&-\frac{3}{2}\\
\end{array}\right)\end{equation} as given  in Subsection \ref{subsec:gram}.
Consider $$I_1(x_1n_1+x_2n_2+x_3n_3+x_4n_4)=y_1n_1+y_2n_2+y_3n_3+y_4n_4,$$ we have the matrix representation of $I_1$-action  on $\mathcal{L}$ with basis  $\{n_1, n_2, n_3, n_4\}$ is 
\begin{equation}\label{matrix:matrixrealI1}
I_{\mathcal{L},1}=\begin{pmatrix}
1& -2&  0 &-2\\
0& -1 & 0& 0\\
0&0&-1 &0\\
0& 0&0&-1\\
\end{pmatrix}.\end{equation}
Then $I_{\mathcal{L},i}=J_{\mathcal{L}} \cdot I_{\mathcal{L},i} \cdot  J^{-1}_{\mathcal{L}}$ for $i=2,3,4$ is the matrix representation of $I_{i}$-action  on $\mathcal{L}$ with base  $\{n_1, n_2, n_3, n_4\}$.

We transform   $\mathcal{L}$ into the Klein model of  ${\bf H}^{3}_{\mathbb R}$, we also transform $J_{\mathcal{L}}$ and $I_{\mathcal{L},i}$ into matrices in the Lie group $\mathbf{PO}(3,1)$.
Now the quadratic form  with respect to the basis  $\{{ \bf n_1}, { \bf n_2}, { \bf n_3}, { \bf n_4}\}$ is given by 
	\begin{equation}\label{matrix:HL}H_{\mathcal{L}}=(
\langle  { \bf n_{i}}, { \bf n_{j}}\rangle)_{1\leq i, j \leq 4}=\begin{pmatrix}
1& -1& 0&-1\\
-1& 1 & -1& 0\\
0&-1&1 &-1\\
-1 & 0&-1&1\\
\end{pmatrix}.\end{equation}
The quadratic form of the Klein model of  ${\bf H}^{3}_{\mathbb R}$  is given by  $H_{\mathcal{R}}=H=diag \{1,1,1,-1\}$.
Consider the  matrix 	\begin{equation}\label{matrix:C} C=\begin{pmatrix}
1& 0& 0&0\\
0& 0 & 1& 0\\
-\frac{1}{\sqrt{3}}&-\frac{2}{\sqrt{3}}&-\frac{1}{\sqrt{3}} &\frac{1}{\sqrt{3}}\\
1 & 1&1&0\\
\end{pmatrix}\end{equation}
with real entries. 
Then $\det( H_{\mathcal{L}})=-3$, $\det(C)=\frac{1}{\sqrt{3}}$, and  $C \cdot H_{\mathcal{L}}\cdot C^{t}=H$.

We denote by $$J_{k}=(C^{t})^{-1} \cdot J_{\mathcal{L}} \cdot C^{t},$$ and 
$$I_{k,i}=(C^{t})^{-1} \cdot I_{\mathcal{L},i} \cdot C^{t}$$
for $i=1,2,3,4$.
Then  $$J_{k} \cdot H \cdot J^{*}_{k}=H$$ and $$I_{k,i} \cdot H \cdot I^{*}_{k,i}=H$$ for $i=1,2,3,4$.
So now $J_{k}$ and $I_{k,i}$ for $i=1,2,3,4$ are the matrix presentations in  the Klein model of ${\bf H}^{3}_{\mathbb R}$ for 
$J$ and $I_{i}$ for $i=1,2,3,4$ acting on $\mathcal{L}$. It is simple to get the matrices of $J_{k}$ and  $I_{k,i}$ for $i=1,2,3,4$ from the matrix $C$, $J_{\mathcal{L}}$ and $I_{\mathcal{L},i}$, we do not write down them explicitly here.

\subsection{The partial  Dirichlet domain $D_{R}$ in ${\bf H}^{3}_{\mathbb R}$}\label{subsec:Dirichletreal}

From  the transformation in Subsection \ref{subsec:totallygeodesic}, in this subsection, we always consider  the Klein model of  ${\bf H}^{3}_{\mathbb R}$. 
Consider the partial  Dirichlet domain $D_{R}$ with center point  $$p_0=(C^{t})^{-1}([\frac{1}{2}, \frac{1}{2},\frac{1}{2},\frac{1}{2}]^{t})=[-\frac{1}{2},-\frac{1}{2},\frac{\sqrt{3}}{2},\frac{3}{2}]^{t}$$ in   ${\bf H}^{3}_{\mathbb R}$,
here $p_0$ is the fixed point of $J_{k}$  in the Klein model of ${\bf H}^{3}_{\mathbb R}$. Where $$R=\left\{(I_{k,1}I_{k,2})^{\pm 1},(I_{k,2}I_{k,3})^{\pm 1},(I_{k,3}I_{k,4})^{\pm 1},(I_{k,4}I_{k,1})^{\pm 1},I_{k,1}I_{k,3},I_{k,2}I_{k,4}\right\}$$ is a subset of the group  $$\langle I_{k,1}I_{k,2}, I_{k,2}I_{k,3}, I_{k,3}I_{k,4}, I_{k,4}I_{k,1}\rangle.$$

As usual, we denote by $p_{ij}=I_{k,i}I_{k,j}(p_0) \in {\bf H}^3_{\mathbb R}$ and $B_{ij}=B(p_0, p_{ij})$   the bisector in $ {\bf H}^3_{\mathbb R}$ with respect to the two points $p_0$ and $p_{ij}$ for certain $i,j \in \{1,2,3,4\}$. Then $$p_{12}= I_{k,1}I_{k,2}(p_0)=[\frac{3}{2},-\frac{3}{2},\frac{\sqrt{3}}{2},\frac{5}{2}]^{t},$$ \\ [-2 ex]
$$p_{21}=I_{k,2}I_{k,1}(p_0)=[-\frac{5}{2},-\frac{7}{2},\frac{\sqrt{3}}{2},\frac{9}{2}]^{t},$$ \\ [-2 ex]
$$p_{23}=I_{k,2}I_{k,3}(p_0)=[-\frac{7}{2},-\frac{5}{2},\frac{\sqrt{3}}{2},\frac{9}{2}]^{t},$$ \\ [-2 ex]
$$p_{32}= I_{k,3}I_{k,2}(p_0)=[-\frac{3}{2},\frac{3}{2},\frac{\sqrt{3}}{2},\frac{5}{2}]^{t},$$ \\ [-2 ex]
$$p_{34}= I_{k,3}I_{k,4}(p_0)=[-\frac{3}{2},\frac{3}{2},\frac{3\sqrt{3}}{2},\frac{7}{2}]^{t},$$\\ [-2 ex]
$$p_{43}= I_{k,4}I_{k,3}(p_0)=[-\frac{7}{2},-\frac{5}{2},\frac{7\sqrt{3}}{2},\frac{15}{2}]^{t},$$\\ [-2 ex]
$$p_{41}= I_{k,4}I_{k,1}(p_0)=[-\frac{5}{2},-\frac{7}{2},\frac{7\sqrt{3}}{2},\frac{15}{2}]^{t},$$\\ [-2 ex]
$$p_{14}= I_{k,1}I_{k,4}(p_0)=[\frac{3}{2},-\frac{3}{2},\frac{3\sqrt{3}}{2},\frac{7}{2}]^{t},$$\\ [-2 ex]
$$p_{13}= I_{k,1}I_{k,3}(p_0)=[\frac{1}{2},\frac{1}{2},\frac{\sqrt{3}}{2},\frac{3}{2}]^{t},$$\\ [-2 ex]
$$p_{24}= I_{k,2}I_{k,4}(p_0)=[-\frac{5}{2},-\frac{5}{2},\frac{3\sqrt{3}}{2},\frac{9}{2}]^{t}.$$

\begin{figure}
	\begin{center}
		\begin{tikzpicture}
		\node at (0,0) {\includegraphics[width=8cm,height=8cm]{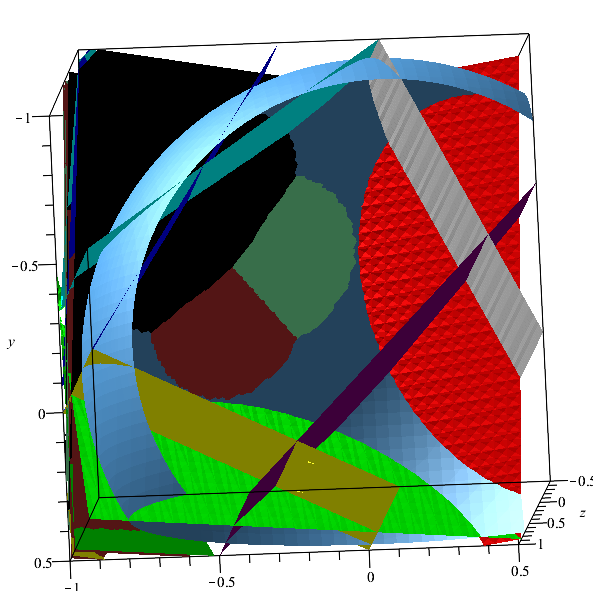}};

		\end{tikzpicture}
	\end{center}
	\caption{Part of a  realistic view of Dirichlet domain of $\rho_{\pi}(K) <\mathbf{PO}(3,1)$ from a point view inside $D_{R}$. The steelblue sphere is the ideal boundary of  ${\bf H}^{3}_{\mathbb R}$.}
	\label{figure:22inftyrealdirichlet}
\end{figure}

We fix lifts $\bf{p_0}$ and $\bf{p_{ij}}$ of $p_0$ and $p_{ij}$ as vectors in $\mathbb{R}^{3,1}$,  such that the entries of  $\bf{p_0}$ and $\bf{p_{ij}}$ are just the same as  entries of $p_0$ and $p_{ij}$ above.  We have $$\langle {\bf p_0}, {\bf p_0} \rangle =\langle {\bf p_{ij}},{\bf p_{ij}} \rangle=-1$$
for ${\bf p_{ij}}$ above.

% All of these lifts have the same norm will be very convenient  later. 
%It is easy to check $p_0^{*} \cdot H \cdot p_0=-1$, and $p_{ij}^{*}\cdot H \cdot p_{ij}=-1$ for all $p_{ij}$ above.

  Now for $p=[x,y,z,1]^{t} \in {\bf H}^{3}_{\mathbb R}$, if $p$ lies in the bisector  $B_{12}$ of $I_{k,1}I_{k,2}$, 
that is, the distances of $p$ to $p_0$ and $p_{12}$ equal, then $$|{ \bf p_0}^{*} \cdot H \cdot {\bf p}|=|{\bf p_{12}}^{*} \cdot H \cdot { \bf p}|.$$
So the bisector $B_{12}$ is given by all $[x,y,z,1]^{t}$ with the conditions  $$|-\frac{1}{2}\cdot x-\frac{1}{2}\cdot y+\frac{\sqrt{3}}{2}\cdot z-\frac{3}{2} \cdot 1| = |\frac{3}{2}\cdot x-\frac{3}{2}\cdot y+\frac{\sqrt{3}}{2}\cdot z-\frac{5}{2}\cdot 1|$$
and $$x^2+y^2+z^2 <1.$$
Which is a round disk in the Klein model of ${\bf H}^{3}_{\mathbb R}$.
Similarly,  we can give all the bisectors $B_{21}$, $B_{23}$, $B_{32}$, $B_{34}$, $B_{43}$, $B_{41}$, $B_{14}$, $B_{13}$ and $B_{24}$.

Let $A_{k,i}=I_{k,i}I_{k,i+1}$ for $i=1,2,3,4$ mod 4. It is not  difficult to show the following directly (without using the over-killed methods in Section \ref{sec:complex3dim}), we omit the details. 
\begin{prop} \label{thm:3dimrealdirichlet}
	$D_{R}$ is the Dirichlet domain for  $\rho_{\pi}(K) <\mathbf{PO}(3,1)$ acting on   ${\bf H}^{3}_{\mathbb R}$ with center $p_0$. Moreover,
	the group $\rho_{\pi}(K)= \langle A_{k,1}, A_{k,2},A_{k,3},A_{k,4} \rangle$ is discrete and has a presentation 
	 $$\rho_{\pi}(K)=\left\langle A_{k,1}, A_{k,2},A_{k,3},A_{k,4}  \Bigg| \begin{array} {c}   A_{k,1}A_{k,2}A_{k,3}A_{k,4}=id,\\ [3 pt]
(A_{k,1}A_{k,2})^2=(A_{k,2}A_{k,3})^2=id,\\ [3 pt]
(A_{k,3}A_{k,4})^2=(A_{k,4}A_{k,1})^2=id
	\end{array}\right\rangle$$

\end{prop}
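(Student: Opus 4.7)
The plan is to follow exactly the same template as the proof of Theorem \ref{thm:2dimdirichlet} in Section \ref{sec:Dirich2dim}, but with two simplifications. First, bisectors in $\mathbf{H}^3_{\mathbb{R}}$ are totally geodesic 2-planes (rather than bisectors in $\mathbf{H}^2_{\mathbb{C}}$, which are only real analytic), so every intersection question reduces to linear algebra in $\mathbb{R}^{3,1}$ rather than the spinal-coordinate analysis of Giraud disks. Second, I still exploit the $\mathbb{Z}_4$-symmetry generated by $J_k$, so it suffices to analyze the bisectors in one $J_k$-orbit.

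First I would carry out the intersection census, namely verify Table \ref{table:intersecton} in this real setting. Because each $B_{ij}$ is a Euclidean round disk in the Klein model, I only need to compute the two linear equations $|\langle{\bf p},{\bf p_0}\rangle|=|\langle{\bf p},{\bf p_{ij}}\rangle|$ defining $B_{ij}$ and $B_{kl}$, and then check whether the resulting affine line meets the open unit ball. Using the explicit coordinates of $p_0, p_{12},\ldots,p_{24}$ listed in Subsection \ref{subsec:Dirichletreal}, this is immediate. I expect to find the same combinatorial pattern as in the $\mathbf{H}^2_{\mathbb C}$ case: $B_{12}$ is tangent to $B_{21}$ at the parabolic fixed point $q_{12}$ of $A_{k,1}$, is disjoint from $B_{23},B_{32},B_{43},B_{41},B_{24}$, and meets $B_{13},B_{14}$ transversely; moreover $B_{12}\cap B_{34}$ lies strictly on the far side of $B_{13}$ from $p_0$, so contributes nothing to $D_R$. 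The analogues for $B_{13}$ follow both directly and by $\mathbb{Z}_4$-symmetry.

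Next I would read off the combinatorial structure of each side $s_{ij}=B_{ij}\cap D_R$, matching Propositions \ref{prop:s12} and \ref{prop:s13} one dimension lower: $s_{12}$ is a topological 2-disk bounded by the arcs $s_{12}\cap s_{13}$ and $s_{12}\cap s_{14}$ meeting at the triple point $s_{12}\cap s_{13}\cap s_{14}=B_{12}\cap B_{13}\cap B_{14}$, together with an ideal arc through $q_{12}$; and $s_{13}$ has analogous structure. The side-pairing maps are $A_{k,i}\colon s_{i,i+1}\to s_{i+1,i}$ and $I_{k,1}I_{k,3}\colon s_{13}\to s_{13}$, $I_{k,2}I_{k,4}\colon s_{24}\to s_{24}$, each behaving exactly as in Propositions \ref{prop:sidepairI2I1}--\ref{prop:I2I4}. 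The ridge cycles are then identical to those in the proof of Theorem \ref{thm:2dimdirichlet}:
\begin{itemize}
\item $s_{14}\cap s_{12}\xrightarrow{A_{k,1}^{-1}}s_{21}\cap s_{24}\xrightarrow{(A_{k,2}A_{k,3})^{-1}}s_{24}\cap s_{41}\xrightarrow{A_{k,4}^{-1}}s_{14}\cap s_{12}$, giving $A_{k,4}^{-1}A_{k,3}^{-1}A_{k,2}^{-1}A_{k,1}^{-1}=\mathrm{id}$;
\item $s_{13}\cap s_{12}\xrightarrow{A_{k,1}^{-1}}s_{21}\cap s_{23}\xrightarrow{A_{k,2}^{-1}}s_{32}\cap s_{31}\xrightarrow{A_{k,1}A_{k,2}}s_{13}\cap s_{12}$, giving $(A_{k,1}A_{k,2})^2=\mathrm{id}$;
\item the $\mathbb{Z}_4$-rotates of these by $J_k$ yielding $(A_{k,2}A_{k,3})^2=(A_{k,3}A_{k,4})^2=(A_{k,4}A_{k,1})^2=\mathrm{id}$.
\end{itemize}

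Finally, the completeness condition at infinity needs to be checked at each ideal tangency point $q_{ij}$: here the relevant stabilizer is generated by the parabolic $A_{k,i}$, and since $A_{k,i}$ is a Euclidean translation on the horosphere through $q_{ij}$ (being conjugate in $\mathbf{PO}(3,1)$ to a parabolic acting by translation), the quotient is complete at this cusp. Applying the Poincar\'e polyhedron theorem yields that $D_R$ is a fundamental domain and the side-pairing presentation
\[
\left\langle A_{k,1},A_{k,2},A_{k,3},A_{k,4}\ \Big|\ A_{k,1}A_{k,2}A_{k,3}A_{k,4}=\mathrm{id},\ (A_{k,i}A_{k,i+1})^2=\mathrm{id}\right\rangle
\]
holds. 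The only step requiring genuine attention is verifying that $B_{12}\cap B_{34}$ (and its $J_k$-rotates) is screened off by $B_{13}$; everything else is linear. As in the $\mathbf{H}^2_{\mathbb C}$ case, the two redundant relations $(A_{k,3}A_{k,4})^2=(A_{k,4}A_{k,1})^2=\mathrm{id}$ are in fact consequences of the other three and the $\mathbb{Z}_4$-symmetry, but we keep them to emphasize the symmetric structure.
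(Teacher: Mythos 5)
Your outline is correct and is essentially the argument the paper intends here: the paper states Proposition \ref{thm:3dimrealdirichlet} without proof (``we omit the details''), and your reduction of every intersection check to linear algebra on affine planes in the Klein model, followed by the same side-pairing, ridge-cycle and cusp-completeness analysis as in the proof of Theorem \ref{thm:2dimdirichlet}, is exactly the omitted computation. Two small points of bookkeeping: at $\theta=\pi$ the intersection $B_{12}\cap B_{34}$ is in fact empty (cf.\ Remark \ref{remark:diff} and part (\ref{item:B12nonempty3dim}) of Proposition \ref{prop:B123dim}), so the ``screening by $B_{13}$'' step degenerates to a disjointness check; and the relation $(A_{k,1}A_{k,2})^2=id$ is the reflection relation coming from the self-paired side $s_{13}$ rather than from the cycle of the ridge $s_{13}\cap s_{12}$, which only yields the trivial word $A_{k,1}A_{k,2}\cdot A_{k,2}^{-1}\cdot A_{k,1}^{-1}$, exactly as in the proof of Theorem \ref{thm:2dimdirichlet}.
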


	% $$\langle A_{k,1}, A_{k,2},A_{k,3},A_{k,4} : %A_{k,1}A_{k,2}A_{k,3}A_{k,4}=id,(A_{k,1}A_{k,2})^2=(A_{k,2}A_{k,3})^2=(A_{k,3}A_{k,4})^2=(A_%{k,4}A_{k,1})^2=id\rangle.$$

\begin{figure}
	\begin{center}
		\begin{tikzpicture}
		\node at (0,0) {\includegraphics[width=10cm,height=10cm]{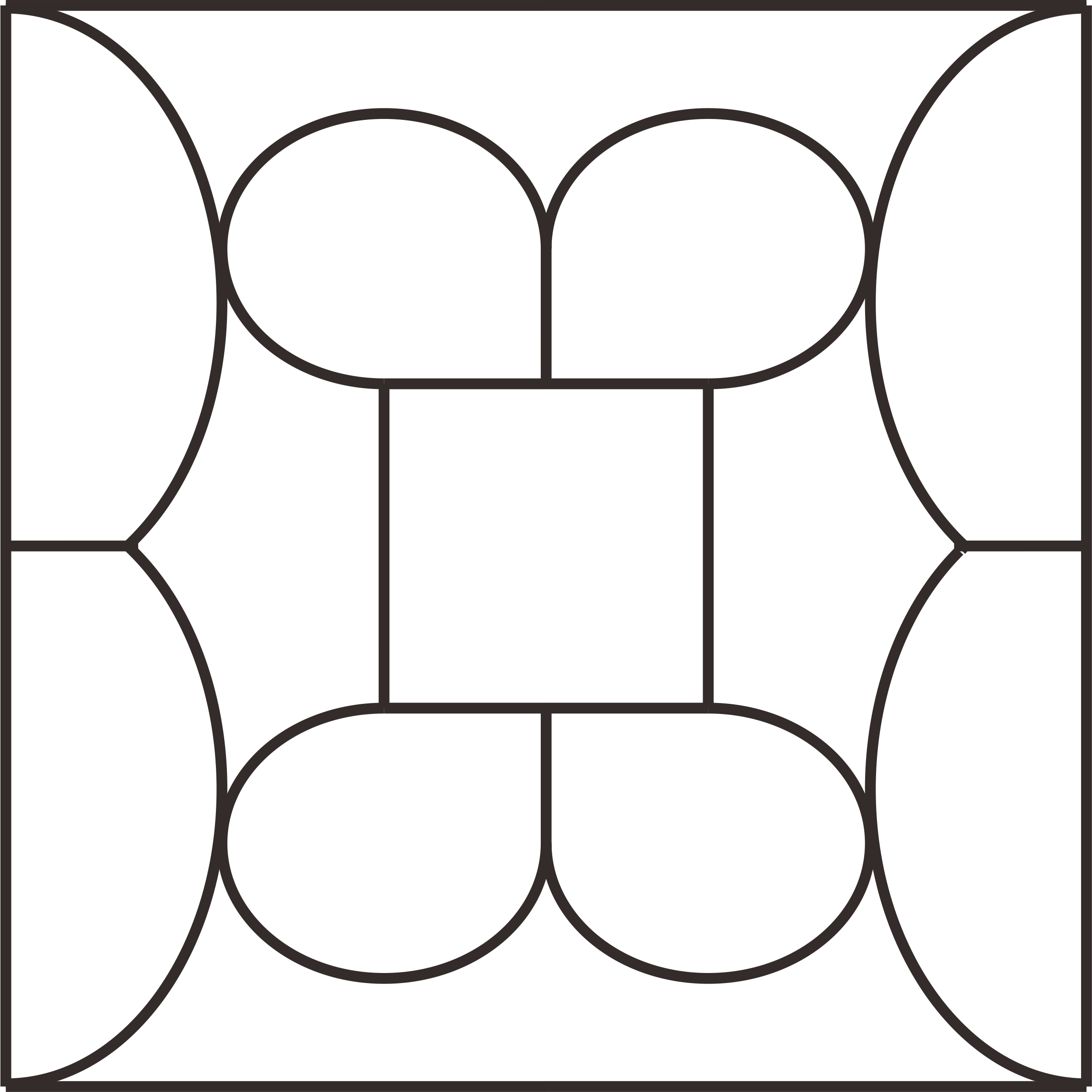}};
		
		\node at (0,4.2){\Large $\partial {\bf H}^{3}_{\mathbb R}$};
		\node at (0,-4.2){\Large $\partial {\bf H}^{3}_{\mathbb R}$};
		\node at (2.5,0.0){\Large $\partial {\bf H}^{3}_{\mathbb R}$};
		\node at (-2.5,0.0){\Large $\partial {\bf H}^{3}_{\mathbb R}$};
		
		\node at (4,2.0){\Large $B_{12}$};
		
			\node at (4,-2.0){\Large $B_{14}$};
		
			\node at (1.5,-2.5){\Large $B_{41}$};
				\node at (-1.5,-2.5){\Large $B_{43}$};
					\node at (-1.5,2.5){\Large $B_{23}$};
				\node at (1.5,2.5){\Large $B_{21}$};

					\node at (-4,-2.0){\Large $B_{34}$};
						\node at (-4,2.0){\Large $B_{32}$};

							\node at (-0,0.0){\Large $B_{24}$};
						\node at (0,-5.5){\Large $B_{13}$};

		\end{tikzpicture}
	\end{center}
	\caption{The abstract picture of the boundary of  Dirichlet domain of $\rho_{\pi}(K) <\mathbf{PO}(3,1)$ look out side from $p_0$, which is a 2-sphere with labeled disks. For example, the disk labeled by $B_{12}$ means  $B_{12}\cap D_{R}$. The out disk is $B_{13}\cap D_{ R}$. Each of the  four disks labeled by $\partial {\bf H}^{3}_{\mathbb R}$ means a disk in $\partial {\bf H}^{3}_{\mathbb R}$. }
	\label{figure:22inftyrealdirichletabstract}
\end{figure}

See Figure 	\ref{figure:22inftyrealdirichlet} for the partial  Dirichlet domain of $D_{R}<\rho_{\pi}(K)$  acting on   ${\bf H}^{3}_{\mathbb R}$. The color rule in Figure \ref{figure:22infty2dimdirichlet} is: 

\begin{itemize}
	\item 	$B_{12}$, gray;
	\item 	$B_{21}$, cyan;
	\item 	$B_{23}$, blue;
	\item 	$B_{32}$, yellow;
	\item 	$B_{34}$, green;
	\item 	$B_{43}$, brown;
	\item 	$B_{41}$, aquamarine;
	\item 	$B_{14}$, red;
	\item 	$B_{13}$, purple;
	\item 	$B_{24}$, black.
\end{itemize}

For example, the brown rectangle in Figure 	\ref{figure:22inftyrealdirichlet} is $B_{43} \cap D_{R}$, which is a part of the frontier of $D_{R}$. The aquamarine and red rectangles are $B_{41} \cap D_{R}$ and $B_{14} \cap D_{R}$ respectively.  The black hexagon is $B_{24} \cap D_{R}$. But the other $B_{ij} \cap D_{R}$  is  not so clear from this realistic  figure. 
See also Figure	\ref{figure:22inftyrealdirichletabstract} for an abstract  picture of the Dirichlet domain of $\rho_{\pi}(K) <\mathbf{PO}(3,1)$.

 Figures \ref{figure:22inftyrealdirichlet} and \ref{figure:22inftyrealdirichletabstract} in ${\bf H}^{3}_{\mathbb R}$-geometry should be compared with Figures \ref{figure:22infty2dimdirichlet} and  \ref{figure:22infty2dimdirichletabstract} in ${\bf H}^{2}_{\mathbb C}$-geometry of $\rho_{\frac{5 \pi}{6}}(K)$. In particular,  both the Dirichlet domains are given by the same set $R$. Moreover, the intersection patterns of the sides of $D_{R}$ are also the same. Which give the hint that 
the Dirichlet domain of $\rho_{\theta}(K)$ for general $\theta \in (\frac{5 \pi}{6}, \pi]$ is also  given by the set  $R$. This is what we do in Section  \ref{sec:complex3dim}.

%\section{The proof of Theorem \ref{thm:complex3dim}}\label{sec:complex3dim}

\section{Dirichlet domain of $\rho_{\theta}(K)$ in ${\bf H}^{3}_{\mathbb C}$}\label{sec:complex3dim}
%In this section, we prove Theorem \ref{thm:complex3dim}

In this section, we prove Theorem  \ref{thm:complex3dim} for  all $\theta \in (\frac{5}{6},\pi]$.

%Recall 	\begin{equation}\nonumber
%p_0=\left[\begin{matrix} 0, 0,0, 1 \end{matrix}\right]^{t}
%\end{equation} is the fixed point of $J$ in ${\bf H}^{3}_{\mathbb C}$.

\subsection{Some preliminaries}\label{subsec:3dimpreliminary}

For any $\theta \in (\frac{5}{6},\pi]$ in the moduli space $\mathcal{M}$, we denote by $p_{ij}=I_iI_j(p_0)\in {\bf H}^{3}_{\mathbb C}$,  and $B_{ij}=B(p_0,p_{ij})$  the bisector in  ${\bf H}^{3}_{\mathbb C}$  with respect to the points $p_0$ and $p_{ij}$.  We also consider the  partial Dirichlet domain $D_{R}$ for the same set $R$ as in Sections \ref{sec:Dirich2dim} and \ref{sec:Dirich3dimreal}. 
Recall $R \subset \rho_{\frac{5 \pi}{6}}(K)$  be the set of ten words in $\rho_{\theta}(K)$:
$$\left\{ (I_1I_2)^{\pm 1},~~~(I_2I_3)^{\pm 1},~~~(I_3I_4)^{\pm 1},~~~(I_4I_1)^{\pm 1},~~~ I_1I_3,~~~ I_2I_4 \right\}.$$

	\begin{remark} \label{remark:diff}The main technical  difference between the partial Dirichlet domains $D_{R}$ when  $\theta \in (\frac{5}{6},\pi]$ and $\theta= \frac{5 \pi}{6}$ is:
\begin{itemize}
 \item  $B_{12}\cap B_{34}=\emptyset$ when  $\theta$ is near to $\pi$;
 
 \item   $B_{12}\cap B_{34}\neq\emptyset$ when $\theta$ is near to $\frac{5 \pi}{6}$ (in particular, when $\theta =\frac{5 \pi}{6}$,  $B_{12}\cap B_{34}$ is non empty  as shown in Proposition \ref{prop:B12}).
 \end{itemize}  But in any case $B_{12}\cap B_{34}$ does not lie in the partial Dirichlet domain $D_{R}$, see Proposition \ref{prop:B123dim}.
 	\end{remark}

Now the fixed point of $J$ is 	$p_{0}=[0,0,0,1]^{t}$, and 

\begin{equation}\label{para:p123dim}
p_{12}=\left[
\begin{array}{c}
-\rm{e}^{-\theta\rm{i}}\sqrt{2\cos(2\theta)+1} \\  [2 ex]
\displaystyle{\frac{(-2\rm{e}^{-\theta\rm{i}}+1+\rm{i})\sqrt{-1-2\cos(\theta)+2\sin(\theta)+2\sin(2\theta)}}{2}}\\  [4 ex]
\displaystyle{\frac{(-2\rm{e}^{-\theta\rm{i}}+1-\rm{i})\sqrt{-2\sin(2\theta)-2\cos(\theta)-1-2\sin(\theta)}}{2}}\\  [4 ex]
\rm{e}^{-2\theta\rm{i}}-\rm{e}^{\theta\rm{i}}+1\\
\end{array}
\right],
\end{equation}
\begin{equation}\label{para:p213dim}
p_{21}=\left[
\begin{array}{c}
\rm{e}^{\theta\rm{i}}\sqrt{2\cos(2\theta)+1} \\  [2 ex]
\displaystyle{\frac{(-2\rm{i}\rm{e}^{\theta\rm{i}}+1+\rm{i})\sqrt{-1-2\cos(\theta)+2\sin(\theta)+2\sin(2\theta)}}{2}}\\  [4 ex]
\displaystyle{\frac{(2\rm{i}\rm{e}^{\theta\rm{i}}+1-\rm{i})\sqrt{-2\sin(2\theta)-2\cos(\theta)-1-2\sin(\theta)}}{2}}\\  [4 ex]
\rm{e}^{2\theta\rm{i}}-\rm{e}^{-\theta\rm{i}}+1\\
\end{array}
\right],
\end{equation}

\begin{equation}\label{para:p133dim}
p_{13}=\left[
\begin{array}{c}
\sqrt{2\cos(2\theta)+1} \\ [1 ex] 
0\\ 
0\\
-2\cos(\theta)\\
\end{array}
\right].
\end{equation}

From the coordinates of these points in ${\bf H}^{3}_{\mathbb C}$, it is easy to get all coordinates of $p_{i,i\pm 1}$ for $i=1,2,3,4$ mod 4 and $p_{24}$ by the $J$-action.  
We fix lifts $\bf{p_0}$ and $\bf{p_{ij}}$ of $p_0$ and $p_{ij}$ as vectors in $\mathbb{C}^{3,1}$,  such that the entries of  $\bf{p_0}$ and $\bf{p_{ij}}$ are just the same as  entries of $p_0$ and $p_{ij}$ above.  We have $$\langle {\bf p_0}, {\bf p_0} \rangle =\langle {\bf p_{ij}},{\bf p_{ij}} \rangle=-1$$
for ${\bf p_{ij}}$ above.

First we have 
\begin{lemma} \label{lemma:noncoplane} For any $\theta \in (\frac{5\pi}{6},\pi]$ in the moduli space $\mathcal{M}$,  the four vectors $\bf{p_0}$, $\bf{p_{12}}$, $\bf{p_{13}}$ and $\bf{p_{14}}$  are linearly independent  in $\mathbb{C}^{3,1}$.
\end{lemma}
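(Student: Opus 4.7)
The plan is to show the determinant of the $4\times 4$ matrix $M=[\mathbf{p_0},\mathbf{p_{12}},\mathbf{p_{13}},\mathbf{p_{14}}]$ is nonzero for every $\theta\in(\tfrac{5\pi}{6},\pi]$, and the route will be pure cofactor expansion made cheap by the zero pattern of $\mathbf{p_0}$ and $\mathbf{p_{13}}$. First I would exploit $\mathbf{p_0}=(0,0,0,1)^t$ to expand along the first column, reducing $\det(M)$ (up to sign) to the $3\times 3$ determinant of the top three rows of $[\mathbf{p_{12}},\mathbf{p_{13}},\mathbf{p_{14}}]$. Then, since from (\ref{para:p133dim}) $\mathbf{p_{13}}$ has entries $(\sqrt{2\cos(2\theta)+1},0,0,\ast)^t$, expanding along the $\mathbf{p_{13}}$-column yields
\begin{equation}\nonumber
\det(M)\;=\;\pm\sqrt{2\cos(2\theta)+1}\cdot\bigl(a_2 b_3-a_3 b_2\bigr),
\end{equation}
where $a_i$, $b_i$ denote the coordinates of $\mathbf{p_{12}}$ and $\mathbf{p_{14}}$ respectively. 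Note $2\cos(2\theta)+1\in[2,3]$ on the range, so the radical is bounded away from $0$.

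The next step is to obtain a clean expression for $\mathbf{p_{14}}$ without recomputing $I_1I_4(\mathbf{p_0})$ from scratch. The $\mathbb{Z}_4$-symmetry gives $I_1I_4=J^{-1}(I_2I_1)J$, and since $\mathbf{p_0}$ is an eigenvector of $J$, one gets $\mathbf{p_{14}}=\lambda\,J^{-1}(\mathbf{p_{21}})$ for a scalar $\lambda$. With $J=\mathrm{diag}(-1,\mathrm{i},-\mathrm{i},1)$, the second and third entries of $\mathbf{p_{14}}$ are then read directly off of (\ref{para:p213dim}) by multiplication by $-\mathrm{i}$ and $\mathrm{i}$.

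Setting
\begin{equation}\nonumber
u^2=-1-2\cos\theta+2\sin\theta+2\sin(2\theta),\qquad v^2=-2\sin(2\theta)-2\cos\theta-1-2\sin\theta,
\end{equation}
a direct computation using (\ref{para:p123dim}) and the above expression for $\mathbf{p_{14}}$ collapses $a_2 b_3-a_3 b_2$ to $\mathrm{i}\,uv(1-2\cos\theta)$: the $(1\pm\mathrm{i})$ phases combine via $(1+\mathrm{i})(1+\mathrm{i})-(1-\mathrm{i})(1-\mathrm{i})=4\mathrm{i}$ and the $e^{\pm\theta\mathrm{i}}$ terms combine into $\cos\theta$. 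So
\begin{equation}\nonumber
\det(M)\;=\;\pm\,\mathrm{i}\,\sqrt{2\cos(2\theta)+1}\,\cdot\,u\,v\,\cdot\,(1-2\cos\theta).
\end{equation}

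It remains to show each factor is nonzero on $(\tfrac{5\pi}{6},\pi]$, and this is where the useful factorizations come in:
\begin{equation}\nonumber
u^2=(1+2\cos\theta)(2\sin\theta-1),\qquad v^2=-(2\sin\theta+1)(2\cos\theta+1),
\end{equation}
which I would verify by the substitution $\sin(2\theta)=2\sin\theta\cos\theta$. On $[\tfrac{5\pi}{6},\pi]$ one has $1+2\cos\theta<0$, $2\sin\theta+1>0$, $2\cos\theta+1<0$ strictly, while $2\sin\theta-1\le 0$ with equality only at $\theta=\tfrac{5\pi}{6}$; hence $v^2>0$ throughout and $u^2>0$ on $(\tfrac{5\pi}{6},\pi]$. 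Finally $1-2\cos\theta\ge 1+\sqrt{3}>0$ on the range, and the radical is nonzero as already noted. This gives $\det(M)\ne 0$ for all $\theta\in(\tfrac{5\pi}{6},\pi]$. The only potential obstacle is a miscount in how $J^{-1}$ acts on $\mathbf{p_{21}}$ (a sign or phase slip), which I would guard against by checking the identity $\mathbf{p_{14}}=\lambda J^{-1}(\mathbf{p_{21}})$ numerically at $\theta=\pi$ against Section \ref{sec:Dirich3dimreal}; the vanishing of $u^2$ at $\theta=\tfrac{5\pi}{6}$ is the expected degeneracy to $\mathbf{H}^2_{\mathbb{C}}$-geometry and confirms the boundary behavior is correct.
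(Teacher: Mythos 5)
Your proposal is correct and takes essentially the same route as the paper: both prove the lemma by computing $\det[\mathbf{p_0},\mathbf{p_{12}},\mathbf{p_{13}},\mathbf{p_{14}}]$ in closed form and checking it vanishes only at $\theta=\frac{5\pi}{6}$, and your cofactor expansion, the identity $\mathbf{p_{14}}=J^{-1}(\mathbf{p_{21}})$ (with $\lambda=1$), and the factorizations $u^2=(1+2\cos\theta)(2\sin\theta-1)$, $v^2=-(2\sin\theta+1)(2\cos\theta+1)$ all check out. One remark worth recording: your closed form $\det = \mathrm{i}\sqrt{2\cos(2\theta)+1}\,uv\,(1-2\cos\theta)$ gives $3\mathrm{i}\sqrt{3}$ at $\theta=\pi$, which agrees with direct evaluation of the matrix built from the printed coordinates, whereas the paper's displayed expression for $\det(S)$ evaluates to $5\mathrm{i}\sqrt{3}$ there; this does not affect the conclusion (both expressions are nonzero precisely on $(\frac{5\pi}{6},\pi]$), but it indicates a misprint in the paper's formula rather than a flaw in yours.
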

\begin{proof}This is proved by a direct  calculation. Consider the matrix $S$ such that  its four columns are the vectors $p_0$, $p_{12}$, $p_{13}$ and $p_{14}$ given by (\ref{para:p123dim}), (\ref{para:p133dim}) and $J$-action respectively.
	Then $$\det(S)=(-2 \rm{i}\cos(\theta)-2\rm{i}\cos(3\theta)-2 \sin(3\theta)+\rm{i})\sqrt{4\cos^2(2\theta)-1}.$$
	Now it is easy to see $\det(S)=0$ when $\theta =\frac{5\pi}{6}$ and  $\det(S)$ is non-zero when $\theta \in (\frac{5 \pi}{6},\pi]$.
\end{proof}

Lemma  \ref{lemma:noncoplane} also explains why we should prove Theorem \ref{thm:complex3dim} when $\theta =\frac{5 \pi}{6}$  separately in Section \ref{sec:Dirich2dim}, since the proof in this section does not hold when these four vectors $\bf{p_0}$, $\bf{p_{12}}$, $\bf{p_{13}}$ and $\bf{p_{14}}$ span a three dimensional subspace of $\mathbb{C}^{3,1}$.

% From now on we assume $\theta \in (\frac{5}{6},\pi]$  in this section.

\begin{lemma} \label{lemma:123413coplane} For any $\theta \in (\frac{5\pi}{6},\pi]$ in the moduli space $\mathcal{M}$, the four vectors $\bf{p_0}$, $\bf{p_{12}}$, $\bf{p_{34}}$ and $\bf{p_{13}}$  are co-planar  in $\mathbb{C}^{3,1}$.
\end{lemma}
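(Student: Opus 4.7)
The plan is to exhibit an explicit $\mathbb{C}$-linear relation among the four vectors by exploiting the order-two symmetry $J^2$ of the configuration. First I would record that since $J = \mathrm{diag}(-1, \mathrm{i}, -\mathrm{i}, 1)$ fixes $\mathbf{p_0}$ and satisfies $J I_i J^{-1} = I_{i+1}$ (indices mod $4$), one has $J(\mathbf{p_{ij}}) = \mathbf{p_{i+1,j+1}}$; in particular $\mathbf{p_{34}} = J^2 (\mathbf{p_{12}})$ and $J^2 (\mathbf{p_{13}}) = \mathbf{p_{13}}$. The lifts match on the nose because $J^2 \mathbf{p_0} = \mathbf{p_0}$ as a vector in $\mathbb{C}^{3,1}$ (not merely projectively), so $J^2 I_1 I_2 \mathbf{p_0} = I_3 I_4 J^2 \mathbf{p_0} = I_3 I_4 \mathbf{p_0}$.

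Next, observe that $J^2 = \mathrm{diag}(1, -1, -1, 1)$, so its $+1$-eigenspace is $E_+ = \mathrm{span}_{\mathbb{C}}\{e_1, e_4\}$. Both $\mathbf{p_0} = (0,0,0,1)^t$ and, by (\ref{para:p133dim}), $\mathbf{p_{13}} = (\sqrt{2\cos(2\theta)+1},\,0,\,0,\,-2\cos\theta)^t$ lie in $E_+$. Writing $\mathbf{p_{12}} = (a_1, a_2, a_3, a_4)^t$ as in (\ref{para:p123dim}), the identity $\mathbf{p_{34}} = J^2 \mathbf{p_{12}}$ yields $\mathbf{p_{34}} = (a_1, -a_2, -a_3, a_4)^t$, and therefore
\[
\mathbf{p_{12}} + \mathbf{p_{34}} = (2a_1,\,0,\,0,\,2a_4)^t \in E_+.
\]

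The only technical check is that $\{\mathbf{p_0}, \mathbf{p_{13}}\}$ is in fact a basis of $E_+$, i.e.\ that the first coordinate $\sqrt{2\cos(2\theta)+1}$ of $\mathbf{p_{13}}$ is nonzero. For $\theta \in (\frac{5\pi}{6}, \pi]$ one has $2\theta \in (\frac{5\pi}{3}, 2\pi]$, hence $\cos(2\theta) \geq \tfrac{1}{2}$ and $2\cos(2\theta)+1 \geq 2 > 0$. This yields an explicit expression of $\mathbf{p_{12}} + \mathbf{p_{34}}$ as a $\mathbb{C}$-linear combination of $\mathbf{p_0}$ and $\mathbf{p_{13}}$, which is exactly the coplanarity asserted by the lemma. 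I do not anticipate any obstacle beyond this nondegeneracy check; the essential content of the argument is the $J^2$-equivariance $\mathbf{p_{34}} = J^2\mathbf{p_{12}}$ together with the observation that $\mathbf{p_0}$ and $\mathbf{p_{13}}$ already exhaust the $+1$-eigenspace of $J^2$.
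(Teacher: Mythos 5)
Your proof is correct, but it takes a genuinely different route from the paper. The paper proves the lemma by a direct calculation, simply exhibiting the explicit vanishing combination
$2\mathrm{e}^{\theta\mathrm{i}}\,\mathbf{p_0}+\mathbf{p_{12}}+\mathbf{p_{34}}+2\mathrm{e}^{-\theta\mathrm{i}}\,\mathbf{p_{13}}=0$,
which one verifies coordinate by coordinate from (\ref{para:p123dim}) and (\ref{para:p133dim}). You instead argue structurally: since $J$ fixes the lift $\mathbf{p_0}$ exactly and conjugates $I_i$ to $I_{i+1}$, the chosen lifts satisfy $\mathbf{p_{34}}=J^{2}\mathbf{p_{12}}$, so $\mathbf{p_{12}}+\mathbf{p_{34}}=(1+J^{2})\mathbf{p_{12}}$ lands in the $+1$-eigenspace $\mathrm{span}_{\mathbb{C}}\{e_1,e_4\}$ of $J^{2}=\mathrm{diag}(1,-1,-1,1)$, which is spanned by $\mathbf{p_0}$ and $\mathbf{p_{13}}$ once you check $2\cos(2\theta)+1\neq 0$ on $(\frac{5\pi}{6},\pi]$ (it is in fact $>2$ there). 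All the ingredients you invoke are consistent with the paper's normalizations (the lifts $\mathbf{p_{ij}}$ are indeed defined via the $J$-action, and $J\mathbf{p_0}=\mathbf{p_0}$ on the nose), and solving for the coefficients in your basis recovers exactly the paper's relation. What your approach buys is an explanation of \emph{why} the relation exists -- it is forced by the $\mathbb{Z}_4$-symmetry -- and it avoids touching the complicated second and third coordinates of $\mathbf{p_{12}}$ entirely; what the paper's approach buys is the explicit coefficients, though these are not used elsewhere in the argument, so nothing is lost by your route.
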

\begin{proof}This is also proved by a direct calculation. In fact we have 
	$$2 \rm{e}^{\theta\rm{i}} \cdot {\bf p_0}+{\bf p_{12}}+{\bf p_{34}}+2 \rm{e}^{-\theta\rm{i}} \cdot {\bf p_{13}}$$ is the zero vector in $\mathbb{C}^{3,1}$.
\end{proof}

\subsection{Intersection patterns of the bisectors of  the partial   Dirichlet domain $D_{R}$}\label{subsec:3dimDirichlet}

In this subsection, we will study  intersection patterns of the bisectors for $D_{R}$.  The reader may compare to  Table \ref{table:intersecton} in Section \ref{subsec:2dimDirichlet} (there is only one  difference on the intersection $B_{12} \cap B_{34}$ as noted in Remark \ref{remark:diff}).

We first consider the intersections of the bisector $B_{12}$ with other bisectors, comparing to Proposition \ref{prop:B12} in Subsection \ref{subsec:2dimintersection}, the proof here is much more involved.

\begin{prop}\label{prop:B123dim} For any $\theta \in (\frac{5\pi}{6}, \pi]$, the bisector $B_{12}$ of $I_1I_2$ have the following properties:
	
	\begin{enumerate}
		
		\item  \label{item:B12andB213dim}$B_{12}$ is tangent to   $B_{21}$;

		\item  \label{item:B12B23empty3dim}$B_{12}$ does not intersect  $B_{23}$;
			\item  \label{item:B12B32empty3dim} $B_{12}$ does not intersect   $B_{32}$;
				\item  \label{item:B12B43empty3dim}$B_{12}$ does not intersect  $B_{43}$;
					\item  \label{item:B12B41empty3dim}$B_{12}$ does not intersect   $B_{41}$;
						\item  \label{item:B12B24empty3dim}$B_{12}$ does not intersect   $B_{24}$;
		
		\item  \label{item:B12nonempty3dim} $B_{12}$ intersects $B_{34}$ in a non-empty Giraud disk  $B_{12}\cap B_{34}$ in ${\bf H}^{3}_{\mathbb C}$ when $\theta \in [\frac{5 \pi}{6}, \pi]$ but near to $\frac{5 \pi}{6}$. $B_{12}\cap B_{34}=\emptyset$  when  $\theta \in [\frac{5 \pi}{6}, \pi]$ but near to $ \pi$. When  $B_{12}\cap B_{34}$ is non-empty, $B_{12}\cap B_{34}$ lies in the component of  ${\bf H}^2_{\mathbb{C}}-B_{13}$  which does not contain the  point $p_0$. In particular, in any case  $B_{12}\cap B_{34}$ does not lie in the partial Dirichlet domain $D_{R}$.
	\end{enumerate}
	
\end{prop}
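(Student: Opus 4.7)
The plan is to adapt the strategy of Proposition 4.2 to the three-dimensional complex hyperbolic setting, with the added complication that $\theta$ now ranges over an interval rather than taking a single value. Part (1) is essentially formal: one computes that $I_1 I_2$ has (normalized) trace $3$ for every $\theta$ and hence is unipotent, so Phillips' theorem (Theorem 6.1 of \cite{Phillips:1992}) identifies the Dirichlet domain of $\langle I_1 I_2 \rangle$ centered at $p_0$ as the region cut out by $B_{12}$ and $B_{21}$, and these two bisectors meet tangentially at the unique fixed point $q_{12}$ of $I_1 I_2$ on $\partial {\bf H}^3_{\mathbb C}$.

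For parts (2)--(6), I would parameterize each double intersection $B_{ij} \cap B_{kl}$ along the same lines as Section 2.7. A point $V \in B(p_0, p_{ij}) \cap B(p_0, p_{kl})$ satisfies $|\langle V, {\bf p_0}\rangle| = |\langle V, {\bf p_{ij}}\rangle| = |\langle V, {\bf p_{kl}}\rangle|$; writing $\langle V, {\bf p_{ij}}\rangle = \overline{w_1}\langle V, {\bf p_0}\rangle$ and $\langle V, {\bf p_{kl}}\rangle = \overline{w_2}\langle V, {\bf p_0}\rangle$ with $(w_1, w_2)\in \mathbb{S}^1\times\mathbb{S}^1$, the vector $V$ lies in the Hermitian-orthogonal complement of $\{w_1{\bf p_0}-{\bf p_{ij}},\, w_2{\bf p_0}-{\bf p_{kl}}\}$, a $2$-dimensional complex subspace of $\mathbb{C}^{3,1}$ (hence a complex projective line in $\mathbb{C}{\bf P}^3$). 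To show that the double intersection is empty in ${\bf H}^3_{\mathbb C}$, it suffices to verify that the restriction of $H$ to this $2$-plane is positive definite for all $(w_1, w_2)\in \mathbb{S}^1\times\mathbb{S}^1$ and all $\theta\in(\tfrac{5\pi}{6},\pi]$. Concretely, I would pick an explicit basis of the orthogonal complement as a rational function of $(w_1, w_2, e^{i\theta})$, compute the $2\times 2$ Hermitian matrix of $H$ in this basis, and reduce positive definiteness to two trigonometric polynomial inequalities in three real variables. These can be certified with Maple, and the $\langle J\rangle = \mathbb{Z}_4$-symmetry cuts down the number of distinct pairs to check dramatically. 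The main obstacle here is uniformity in $\theta$: the estimate must not degenerate as $\theta \to \tfrac{5\pi}{6}$, where Lemma 5.1 shows that a linear dependence appears among $\{{\bf p_0}, {\bf p_{12}}, {\bf p_{13}}, {\bf p_{14}}\}$, nor as $\theta \to \pi$, the real-hyperbolic limit. The 2D proof in Section 4 serves as a guide for what trigonometric identities to look for, and the explicit formulas (5.1)--(5.3) plus the $J$-action give all the vectors in closed form.

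For part (7) the sign of the defining Hermitian norm actually changes as $\theta$ varies, so the analysis must be split. Nonemptiness of $B_{12}\cap B_{34}$ near $\theta=\tfrac{5\pi}{6}$ follows from Proposition 4.2 (3) together with continuity of the parameterization of the intersection in $\theta$; emptiness near $\theta=\pi$ can be read off from the real-hyperbolic picture of Section 5, since in the real slice of ${\bf H}^3_{\mathbb R}$ the spinal spheres of $B_{12}$ and $B_{34}$ are disjoint (compare Figure \ref{figure:22inftyrealdirichletabstract}). The crucial final claim---that $B_{12}\cap B_{34}$, when nonempty, lies on the far side of $B_{13}$ from $p_0$---is established using the linear dependence from Lemma 5.2: since ${\bf p_{13}}$ is an explicit linear combination of ${\bf p_0}, {\bf p_{12}}, {\bf p_{34}}$ (with coefficients depending only on $e^{i\theta}$), on the locus $|\langle V, {\bf p_0}\rangle|=|\langle V, {\bf p_{12}}\rangle|=|\langle V, {\bf p_{34}}\rangle|$ the quantity $|\langle V, {\bf p_{13}}\rangle|^2$ reduces to an explicit function of the arguments of $\langle V, {\bf p_{12}}\rangle$ and $\langle V, {\bf p_{34}}\rangle$ relative to $\langle V, {\bf p_0}\rangle$. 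A Maple computation, analogous to the one used for $B_{12}\cap B_{34}$ and $B_{13}$ in Proposition 4.2 (3) (see Figure \ref{figure:B13coverB12capB34}), then shows $|\langle V, {\bf p_{13}}\rangle| < |\langle V, {\bf p_0}\rangle|$ everywhere on the intersection, which is exactly the statement that $B_{12}\cap B_{34}$ lies in the half-space of ${\bf H}^3_{\mathbb C} - B_{13}$ opposite to $p_0$. The ``in particular'' conclusion that $B_{12}\cap B_{34} \cap D_R = \emptyset$ is then immediate.
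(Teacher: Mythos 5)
Your proposal follows essentially the same route as the paper: Phillips' theorem for the tangency in (1); a slice/Giraud-type parameterization of each pairwise intersection reduced to Maple-certified trigonometric inequalities over $\mathbb{S}^1\times\mathbb{S}^1\times[\frac{5\pi}{6},\pi]$ for (2)--(6); and, for (7), the coplanarity of $\mathbf{p_0},\mathbf{p_{12}},\mathbf{p_{34}},\mathbf{p_{13}}$ combined with a comparison of $|\langle V,\mathbf{p_{13}}\rangle|$ against $|\langle V,\mathbf{p_0}\rangle|$ on the intersection locus. The only packaging difference in (2)--(6) is that the paper restricts everything to the totally geodesic ${\bf H}^2_{\mathbb C}$-slice $L$ spanned by the three centers and uses a modified cross product $\boxtimes_L$ there, whereas you work with the $2$-dimensional Hermitian orthocomplement of $\{w_1\mathbf{p_0}-\mathbf{p_{ij}},\,w_2\mathbf{p_0}-\mathbf{p_{kl}}\}$ in $\mathbb{C}^{3,1}$; since that orthocomplement splits as the polar (positive) direction of $L$ plus the paper's vector $V$, the two criteria are literally equivalent. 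One soft spot: for the emptiness of $B_{12}\cap B_{34}$ near $\theta=\pi$ you argue from disjointness of the traces on the invariant ${\bf H}^3_{\mathbb R}$, but disjointness of the real traces does not by itself imply disjointness of the full complex bisectors (orthogonal projection onto a totally real form does not preserve bisectors the way projection onto a complex slice does); the paper instead checks the sign of $\langle W,W\rangle$ in the complex slice $L$. This is a secondary claim, however, and the logically essential part of (7) --- that $B_{12}\cap B_{34}$, when nonempty, lies behind $B_{13}$ and hence outside $D_R$ --- is handled correctly in your argument.
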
	

 	\begin{proof}The proof of (\ref{item:B12andB213dim}) of Proposition \ref{prop:B123dim} runs the same line as the proof of  (\ref{item:B12andB21}) of Proposition \ref{prop:B12}.

 For the proof of 	(\ref{item:B12B23empty3dim}). We now consider the intersection $B_{12}\cap B_{23}$. It is easy to see that for any $\theta \in (\frac{5\pi}{6}, \pi]$,  the span of $\bf{p_0}$, $\bf{p_{12}}$, $\bf{p_{23}}$ is a 3-dimensional subspace  of  $\mathbb{C}^{3,1}$. The intersection of ${\bf H}^{3}_{\mathbb{C}} \subset {\bf P}^{3}_{\mathbb{C}}$ with the projection of this 3-dimensional subspace  into ${\bf P}^{3}_{\mathbb{C}}$ is denoted by $L$, then  $L$ is a totally geodesic ${\bf H}^{2}_{\mathbb{C}} \hookrightarrow {\bf H}^{3}_{\mathbb{C}}$.

 			%	$$H_{L}=\begin{pmatrix}
 		%	-1&-\rm{e}^{-2\theta\rm{i}}+\rm{e}^{\theta\rm{i}}-1&-\rm{e}^{-2\theta\rm{i}}+\rm{e}^{\theta\rm{i}}-1\\
 		%	-\rm{e}^{2\theta\rm{i}}+\rm{e}^{-\theta\rm{i}}-1 & -1&	2\rm{e}^{-3\theta\rm{i}}-2\rm{e}^{-2\theta\rm{i}}+2\rm{e}^{\theta\rm{i}}-2\cos(2\theta)-4\\
 		%	-\rm{e}^{2\theta\rm{i}}+\rm{e}^{-\theta\rm{i}}-1& 2\rm{e}^{3\theta\rm{i}}-2\rm{e}^{2\theta\rm{i}}+2\rm{e}^{-\theta\rm{i}}-2\cos(2\theta)-4 & -1\\
 		%		\end{pmatrix}. $$
 		
 		We re-denote $\bf{p_0}$, $\bf{p_{12}}$ and  $\bf{p_{23}}$  by $\bf{e_1}$, $\bf{e_2}$ and $\bf{e_3}$. Then 
 		we denote $H_{L}=H_{L}(p_0,p_{12},p_{23})$ by the matrix $({\bf e_{i}}^*\cdot H\cdot {\bf e_{j}})_{1 \leq i,j \leq 3}$, we have
 	$$H_{L}=\begin{pmatrix}
 		-1&-\rm{e}^{-2\theta\rm{i}}+\rm{e}^{\theta\rm{i}}-1&-\rm{e}^{-2\theta\rm{i}}+\rm{e}^{\theta\rm{i}}-1 \\ 
 		-\rm{e}^{2\theta\rm{i}}+\rm{e}^{-\theta\rm{i}}-1 & -1&	D\\  
 		-\rm{e}^{2\theta\rm{i}}+\rm{e}^{-\theta\rm{i}}-1& \bar{D} & -1\\
 		\end{pmatrix}. $$
 		Here $$D=2\rm{e}^{-3\theta\rm{i}}-2\rm{e}^{-2\theta\rm{i}}+2\rm{e}^{\theta\rm{i}}-2\cos(2\theta)-4,$$ and $\bar{D}$ is the  complex  conjugate of $D$.
 Now $\det(H_{L})$ is 
 $$(-4\cos(4\theta)-22\cos(2\theta)-1+16\cos(3\theta)+12\cos(\theta))\cdot (1+2\cos(\theta))^2.$$	So $H_{L}$ is the Hermitian form with signature $(2,1)$ on the subspace with the basis $\{{\bf e_1}, {\bf e_2}, {\bf e_3}\}$ when $\theta \in (\frac{5\pi}{6}, \pi]$.
 		
 	 The vector $x_1 {\bf e_1}+x_2 {\bf e_2}+x_3 {\bf e_3}$ is denoted by the vector ${\bf x}$ 	in  $\mathbb{C}^{3}$, and the vector $y_1 {\bf e_1}+y_2 {\bf e_2}+y_3 {\bf e_3}$  is denoted by the vector ${\bf y}$ 	in  $\mathbb{C}^{3}$, here 
 		\begin{equation}\nonumber
 		{\bf x}=\left(
 		\begin{array}{c}
 		x_1 \\
 		x_2 \\
 		x_3 \\
 		\end{array}
 		\right),\quad
 		{\bf y}=\left(
 		\begin{array}{c}
 		y_1\\
 		y_2\\
 		y_3 \\
 		\end{array}
 		\right)
 		\end{equation}
 		with $x_{i},y_{i} \in \mathbb{C}$.
 		So \begin{equation}\nonumber
 		{\bf E_1}=\left(
 		\begin{array}{c}
 		1 \\
 		0 \\
 		0 \\
 		\end{array}
 		\right),\quad
 		{ \bf E_2}=\left(
 		\begin{array}{c}
 		0\\
 		1\\
 		0 \\
 		\end{array}
 		\right),\quad
 		{ \bf E_3}=\left(
 		\begin{array}{c}
 		0\\
 		0\\
 		1 \\
 		\end{array}
 		\right)
 		\end{equation}
 	in  $\mathbb{C}^{3}$	present the vectors ${ \bf e_1}$, ${ \bf e_2}$ and ${ \bf e_3}$ in  $\mathbb{C}^{3,1}$.

 		Comparing to Subsection \ref{subsection:Spinalcoordinates}, we define  the   Hermitian cross-product $\boxtimes_{L}$ on the space $H_{L}$ (which is isometric to ${\bf H}^{2}_{\mathbb{C}}$) with respect to the basis $\{\bf{e_1}, \bf{e_2}, \bf{e_3}\}$  by  
 		$$\mathbf{x} \boxtimes_{L}
 		\mathbf{y}=
 		\left[\begin{matrix}
 		\mathbf{x}^*H_{L}(1,2) \cdot \mathbf{y}^*H_{L}(1,3)-\mathbf{y}^*H_{L}(1,2) \cdot \mathbf{x}^*H_{L}(1,3)\\ 
 		
 		 \mathbf{x}^*H_{L}(1,3) \cdot \mathbf{y}^*H_{L}(1,1)-\mathbf{y}^*H_{L}(1,3) \cdot \mathbf{x}^*H_{L}(1,1)\\
 		 
 		  \mathbf{x}^*H_{L}(1,1) \cdot \mathbf{y}^*H_{L}(1,2)-\mathbf{y}^*H_{L}(1,1) \cdot \mathbf{x}^*H_{L}(1,2)
 		\end{matrix}
 		\right].$$
 		Here $\mathbf{x}^*H_{L}$ is a one-by-three matrix, and $\mathbf{x}^*H_{L}(1,2)$ is the second entry of   $\mathbf{x}^*H_{L}$. The other terms have similar meanings.
 		
 		Then the intersection  $B_{12}\cap B_{23} \cap L$ is  parameterized   by $V=V(z_1,z_2)	\in \mathbb{C}^{3}$ with $\langle V,V \rangle <0$ with respect to the Hermitian form $H_{L}$.
 		Where
 		$$V={ \bf E_2}\boxtimes_{L} { \bf E_3}+z_1 \cdot { \bf E_1}\boxtimes_{L} { \bf E_3}+z_2 \cdot { \bf E_1}\boxtimes_{L} { \bf E_2}$$  and  $(z_1,z_2) =(\rm{e}^{r \rm{i}},\rm{e}^{s \rm{i}})\in \mathbb{S}^1\times \mathbb{S}^1$.
 		
 		We note that 
 		\begin{equation}\nonumber
 		E_1\boxtimes_{L}E_2=\left[
 		\begin{array}{c}
 	 D_1\\  [1 ex]
 		2\rm{i}\sin(\theta)-2\rm{i}\sin(3\theta)+2\cos(2\theta)+1+2\rm{i}\sin(2\theta)	 \\ [1 ex]
 		2\cos(3\theta)+2\cos(\theta)-2\cos(2\theta)-2 \\
 		\end{array}
 		\right],
 		\end{equation}
 		where
 		\begin{flalign} 	\nonumber &
 		D_1= -5\rm{i}\sin(2\theta)+\rm{i}\sin(3\theta)-3\rm{i}\sin(4\theta)+2\rm{i}\sin(5\theta)+4+11\cos(2\theta)
 		& \\
 		\nonumber	
 		&-3\cos(3\theta)-2\cos(5\theta)-10\cos(\theta)+3\cos(4\theta).&
 		&& \nonumber \end{flalign}
 		
 		We also have 
 		\begin{equation}\nonumber
 		E_1\boxtimes_{L}E_3=\left[
 		\begin{array}{c}
 	D_2	\\[1 ex]
 	-2\cos(3\theta)-2\cos(\theta)+2\cos(2\theta)+2 \\[1 ex]
 	2\rm{i}\sin(\theta)-2\rm{i}\sin(3\theta)+2\cos(2\theta)+1+2\rm{i}\sin(2\theta) \\
 		\end{array}
 		\right],
 		\end{equation}
 		where
 		\begin{flalign} 	\nonumber &
 		D_2= \rm{i}\sin(2\theta)+3\rm{i}\sin(3\theta)-\rm{i}\sin(4\theta)+2\rm{i}\sin(\theta)-8-7\cos(2\theta)+8\cos(\theta)
 		& \\
 		\nonumber	
 		&+7\cos(3\theta)-3\cos(4\theta),&
 		&& \nonumber \end{flalign}
 		and
 		\begin{equation}\nonumber
 		E_2\boxtimes_{L}E_3=\left[
 		\begin{array}{c}
 		4\cos(5\theta)+28\cos(3\theta)+32\cos(\theta)-14\cos(4\theta)-32\cos(2\theta)-33 \\[1 ex]
 	D_3\\[1 ex]
 	 D_4\\
 		\end{array}
 		\right],
 		\end{equation}
 	here
 	\begin{flalign} 	\nonumber &
 	D_3= \rm{i}\sin(2\theta)+3\rm{i}\sin(3\theta)-\rm{i}\sin(4\theta)+2\rm{i}\sin(\theta)+8+7\cos(2\theta)-8\cos(\theta)
 	& \\
 	\nonumber	
 	&-7\cos(3\theta)+3\cos(4\theta)&
 	&& \nonumber \end{flalign}
 and
 		\begin{flalign} 	\nonumber &
 		D_4= 5\rm{i}\sin(2\theta)-\rm{i}\sin(3\theta)+3\rm{i}\sin(4\theta)-2\rm{i}\sin(5\theta)+4+11\cos(2\theta)-3\cos(3\theta)
 		& \\
 		\nonumber	
 		&-2\cos(5\theta)-10\cos(\theta)+3\cos(4\theta).&
 		&& \nonumber \end{flalign}

 The vector 	$V \in \mathbb{C}^3$ is a three-by-one matrix,   we denote by  $V(i,1)$ the  entry in the $i$-th  row of $V$. Then  $$V(1,1) \cdot { \bf e_1}+V(2,1) \cdot { \bf e_2}+V(3,1) \cdot { \bf e_3}$$  is a vector in $\mathbb{C}^{3,1}$, we denote it  by $W$. The projection of $W$ into ${\mathbb C}{\mathbf P}^{3}$ is a point in $B_{12}\cap B_{23} \cap L$ if $W$ is a negative vector.
 		Now  $W^{*}\cdot H \cdot W$ is a very complicated term. But with Maple, $W^{*}\cdot H \cdot W$  has minimum 734.88 numerically when $r, s \in [-\pi,\pi]$ and $\theta \in [\frac{5 \pi}{6}, \pi]$ at a point when $\theta= \frac{5 \pi}{6}$. In particular,  any $W$ above is a positive vector in $\mathbb{C}^{3,1}$, so $B_{12}\cap B_{23} \cap L=\emptyset$. Then by the projection from 
 		 ${\bf H}^{3}_{\mathbb{C}}$ to $L$, we have $B_{12}\cap B_{23}$ in ${\bf H}^{3}_{\mathbb{C}}$ is the empty set. 
 		
 		We remark here the minimum of  $W^{*}\cdot H \cdot W$ when $\theta= \frac{5 \pi}{6}$ is 
 		734.88, which is different from the minimum 12.752  in  Proposition \ref{prop:B12}. The reason  is that even through the  Hermitian  from $H_{L}$ is related to $H$, but it is not the same as $H$.  This end   the proof of 	(\ref{item:B12B23empty3dim}).

  For the proof of   (\ref{item:B12B32empty3dim}), so we consider $B_{12}\cap B_{32}$.
 		For any $\theta \in (\frac{5\pi}{6}, \pi]$,  the span of ${ \bf p_0}$, ${ \bf p_{12}}$, ${ \bf p_{32}}$ is a 3-dimensional subspace  of  $\mathbb{C}^{3,1}$. The intersection of ${\bf H}^{3}_{\mathbb{C}} \subset {\bf P}^{3}_{\mathbb{C}}$ with the projection of this 3-dimensional subspace  into ${\bf P}^{3}_{\mathbb{C}}$ is denoted by $L$, then  $L$ is a totally geodesic ${\bf H}^{2}_{\mathbb{C}} \hookrightarrow {\bf H}^{3}_{\mathbb{C}}$.

 		We re-denote ${ \bf p_0}$, ${ \bf p_{12}}$ and  ${ \bf p_{32}}$  by ${ \bf e_1}$, ${ \bf e_2}$ and ${ \bf e_3}$.
 		We denote $H_{L}=H_{L}(p_0,p_{12},p_{32})$ by the matrix $({ \bf e_{i}}^* \cdot H \cdot { \bf e_{j}})_{1 \leq i,j \leq 3}$, then 
 		$$H_{L}=\begin{pmatrix}
 		-1&-\rm{e}^{-2\theta\rm{i}}+\rm{e}^{\theta\rm{i}}-1&-\rm{e}^{2\theta\rm{i}}+\rm{e}^{-\theta\rm{i}}-1\\
 		-\rm{e}^{2\theta\rm{i}}+\rm{e}^{-\theta\rm{i}}-1 & -1&	6 \cos(\theta)-4\cos(2\theta)\\
 		-\rm{e}^{-2\theta\rm{i}}+\rm{e}^{\theta\rm{i}}-1& 6 \cos(\theta)-4\cos(2\theta) & -1\\
 		\end{pmatrix}. $$
 		Now $\det(H_{L})$ is 
 		$$-(4\cos(\theta)-3)\cdot (-1+2\cos(\theta)) \cdot (4\cos(\theta)^2-2\cos(\theta)-3) \cdot (1+2\cos(\theta))^2.$$	So $H_{L}$ is the Hermitian form with signature $(2,1)$ on the subspace with the basis $\{{\bf e_1}, {\bf e_2}, {\bf e_3}\}$ when $\theta \in (\frac{5\pi}{6}, \pi]$.
 		
 	 The vector $x_1 {\bf e_1}+x_2 {\bf e_2}+x_3 {\bf e_3}$ is denoted by the vector ${\bf x}$ 	in  $\mathbb{C}^{3}$, and the vector $y_1 {\bf e_1}+y_2 {\bf e_2}+y_3 {\bf e_3}$  is denoted by the vector ${\bf y}$ 	in  $\mathbb{C}^{3}$, here 
 		\begin{equation}\nonumber
 		{\bf x}=\left(
 		\begin{array}{c}
 		x_1 \\
 		x_2 \\
 		x_3 \\
 		\end{array}
 		\right),\quad
 		{\bf y}=\left(
 		\begin{array}{c}
 		y_1\\
 		y_2\\
 		y_3 \\
 		\end{array}
 		\right)
 		\end{equation}
 		with $x_{i},y_{i} \in \mathbb{C}$.
 		So \begin{equation}\nonumber
 		{\bf E_1}=\left(
 		\begin{array}{c}
 		1 \\
 		0 \\
 		0 \\
 		\end{array}
 		\right),\quad
 		{ \bf E_2}=\left(
 		\begin{array}{c}
 		0\\
 		1\\
 		0 \\
 		\end{array}
 		\right),\quad
 		{ \bf E_3}=\left(
 		\begin{array}{c}
 		0\\
 		0\\
 		1 \\
 		\end{array}
 		\right)
 		\end{equation}
 		in  $\mathbb{C}^{3}$	present the vectors ${ \bf e_1}$, ${ \bf e_2}$ and ${ \bf e_3}$ in  $\mathbb{C}^{3,1}$.

 			%
 		%	Here $\mathbf{x}^*H_{L}$ is a one-by-three matrix, so $\mathbf{x}^*H_{L}(1,2)$ is the second entry of   $\mathbf{x}^*H_{L}$.

 		Comparing to Subsection \ref{subsection:Spinalcoordinates}, we define  the   Hermitian cross-product $\boxtimes_{L}$ on the space $H_{L}={\bf H}^{2}_{\mathbb{C}}$ with respect to the basis $\{{\bf e_1}, {\bf e_2}, {\bf e_3}\}$ by
 		$$\mathbf{x} \boxtimes_{L}
 		\mathbf{y}=
 		\left[\begin{matrix}
 		\mathbf{x}^*H_{L}(1,2) \cdot \mathbf{y}^*H_{L}(1,3)-\mathbf{y}^*H_{L}(1,2) \cdot \mathbf{x}^*H_{L}(1,3)\\ \mathbf{x}^*H_{L}(1,3) \cdot \mathbf{y}^*H_{L}(1,1)-\mathbf{y}^*H_{L}(1,3) \cdot \mathbf{x}^*H_{L}(1,1)\\ \mathbf{x}^*H_{L}(1,1) \cdot \mathbf{y}^*H_{L}(1,2)-\mathbf{y}^*H_{L}(1,1) \cdot \mathbf{x}^*H_{L}(1,2)
 		\end{matrix}
 		\right].$$	
 		Then the intersection  $B_{12}\cap B_{32} \cap L$ is  parameterized by $V=V(z_1,z_2) \in \mathbb{C}^3$ with $\langle V,V \rangle <0$ with respect to the Hermitian form $H_{L}$.
 		Where
 		$$V={ \bf E_2}\boxtimes_{L} { \bf E_3}+z_1 \cdot { \bf E_1}\boxtimes_{L} { \bf E_3}+z_2 \cdot { \bf  E_1}\boxtimes_{L} {\bf E_2}$$  and  $(z_1,z_2) =(\rm{e}^{r \rm{i}},\rm{e}^{s \rm{i}})\in \mathbb{S}^1 \times \mathbb{S}^1$.
 		
 		We note that $E_1\boxtimes_{L}E_2$ is 
 		\begin{equation}\nonumber
 	\left[
 		\begin{array}{c}
 		(1+2\cos(\theta))(8\cos^2(\theta)\rm{e}^{-\theta\rm{i}}+3\rm{i}\sin(2\theta)+3\rm{i}\sin(\theta)-14\cos(\theta)^2+5\cos(\theta)) \\[1 ex]
 		3+8\cos^3(\theta)\rm{e}^{\theta\rm{i}}-10\cos^2(\theta)-\rm{i}\sin(2\theta)+2\cos(\theta)	 \\[1 ex]
 		4\cos(\theta)(\cos(\theta)-1)(1+2\cos(\theta)) \\
 		\end{array}
 		\right],
 		\end{equation}
 		 $E_1\boxtimes_{L}E_3$ is 
 		\begin{equation}\nonumber
 		\left[
 		\begin{array}{c}
 		(1+2\cos(\theta))(-8\cos^2(\theta)\rm{e}^{\theta\rm{i}}+3\rm{i}\sin(2\theta)+3\rm{i}\sin(\theta)+14\cos(\theta)^2-5\cos(\theta))\\[1 ex]
 		4\cos(\theta)(1-\cos(\theta))(1+2\cos(\theta)) \\[1 ex]
 		-3-8\cos^3(\theta)\rm{e}^{-\theta\rm{i}}+10\cos^2(\theta)-\rm{i}\sin(2\theta)-2\cos(\theta)	 \\
 		\end{array}
 		\right].
 		\end{equation}
 		and $E_2\boxtimes_{L}E_3$ is 
 		\begin{equation}\nonumber
 	=\left[
 		\begin{array}{c}
 		(5-4\cos(\theta))(1+2\cos(\theta))(8\cos^2(\theta)-6\cos(\theta)-3)
 		 \\[1 ex]
 		(1+2\cos(\theta))(8\cos^2(\theta)\rm{e}^{-\theta\rm{i}}+3\rm{i}\sin(2\theta)+3\rm{i}\sin(\theta)-14\cos(\theta)^2+5\cos(\theta))\\[1 ex]
 		(1+2\cos(\theta))(8\cos^2(\theta)\rm{e}^{\theta\rm{i}}-3\rm{i}\sin(2\theta)-3\rm{i}\sin(\theta)-14\cos(\theta)^2+5\cos(\theta)) \\
 		\end{array}
 		\right].
 		\end{equation}

 	The vector 	$V \in \mathbb{C}^3$ is a three-by-one matrix, and  $$V(1,1) \cdot { \bf e_1}+V(2,1) \cdot { \bf e_2}+V(3,1) \cdot {\bf e_3} $$  is a vector in $\mathbb{C}^{3,1}$, we  denote it  by $W$.  The projection of $W$ into ${\mathbb C}{\mathbf P}^{3}$ is a point in $B_{12}\cap B_{32} \cap L$ if $W$ is a negative vector.
 		With Maple, $W^{*}\cdot H \cdot W$ is 
 		\begin{flalign} \label{item:B12andB323dim1}
 		\left(-256\cdot (\frac{1}{2}+\cos(\theta))^3\cdot (\cos(\theta)-\frac{3}{4})\cdot (-\frac{1}{2}+\cos(\theta))\cdot (\cos^2(\theta)-\frac{\cos(\theta)}{2}-\frac{3}{4})\right) \cdot h,
 		\end{flalign} where $h$ is 
 		\begin{flalign} \nonumber &
 		21-8\cos(3\theta)+36\cos(2\theta)+4\cos(r-2\theta)+10\cos(r+2\theta)-50\cos(\theta)+14\cos(r)
 		& \\
 	\nonumber	& -14\cos(s)+2\cos(-2\theta+r-s)+2\cos(\theta+r-s)-4\cos(3\theta+r)+10\cos(-\theta+s)& \\
 	\nonumber	&+12\cos(\theta+s)-10\cos(s-2\theta)-4\cos(s+2\theta)-2\cos(-3\theta+r-s)&
 		\\
 		&-4\cos(r-s)-12\cos(-\theta+r)-10\cos(\theta+r)+4\cos(-3\theta+s).&
 		 && \nonumber \end{flalign}

\begin{figure}
\begin{center}
	\begin{tikzpicture}
	\node at (0,0) {\includegraphics[width=6cm,height=6cm]{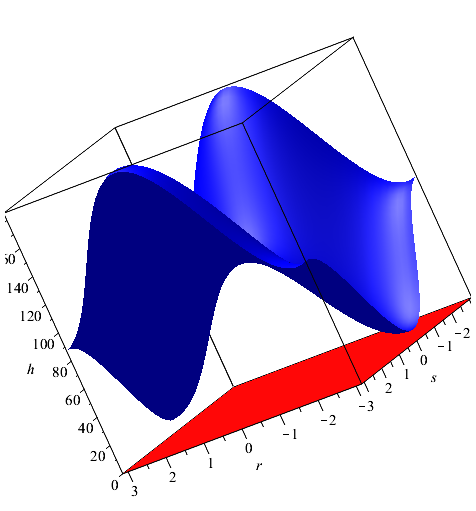}};
	\end{tikzpicture}
\end{center}
\caption{The positivity of the term $h$ in  the proof of (\ref{item:B12B32empty3dim}) of Proposition \ref{prop:B123dim}. In this figure, we take $h=h(r,s)$. The blue surface is the graph of the function $h$ when $\theta=\frac{5 \pi}{6}$. The red plane is $h=0$. This elucidates (but does not prove) $h$ is positive when $\theta=\frac{5 \pi}{6}$.  For general $\theta \in (\frac{5 \pi}{6}, \pi]$, the graph of $h=h(r,s)$ is a  surface which is similar to the surface in the figure and it is also disjoint from  the plane $h=0$.}
\label{figure:B12capB32empty}
\end{figure}

	Note that the first term of (\ref{item:B12andB323dim1}) is positive when $\theta \in [\frac{5 \pi}{6}, \pi]$. 
By Maple, $h$  
has minimum 6.5305 numerically when $r, s \in [-\pi,\pi]$ and $\theta \in [\frac{5 \pi}{6}, \pi]$ at a point when $\theta= \frac{5 \pi}{6}$. In particular, any $W$ above is a positive vector in  $\mathbb{C}^{3,1}$, so $B_{12}\cap B_{32} \cap L=\emptyset$. Then by the projection from 
${\bf H}^{3}_{\mathbb{C}}$ to $L$, we have $B_{12}\cap B_{32}$ in ${\bf H}^{3}_{\mathbb{C}}$ is the empty set. See Figure  \ref{figure:B12capB32empty} for an illustration this fact  when $\theta=\frac{5 \pi}{6}$. This ends the proof of   (\ref{item:B12B32empty3dim}).

For the proof of   (\ref{item:B12B43empty3dim}), now we consider $B_{12}\cap B_{43}$.
So for any $\theta \in (\frac{5\pi}{6}, \pi]$,  the span of ${\bf p_0}$, ${\bf p_{12}}$, ${\bf p_{43}}$ is a 3-dimensional subspace  of  $\mathbb{C}^{3,1}$. The intersection of ${\bf H}^{3}_{\mathbb{C}} \subset {\bf P}^{3}_{\mathbb{C}}$ with the projection of this 3-dimensional subspace  into ${\bf P}^{3}_{\mathbb{C}}$ is denoted by $L$, then  $L$ is a totally geodesic ${\bf H}^{2}_{\mathbb{C}} \hookrightarrow {\bf H}^{3}_{\mathbb{C}}$.

We re-denote ${\bf p_0}$, ${\bf p_{12}}$ and  ${\bf p_{43}}$  by ${\bf e_1}$, ${\bf e_2}$ and ${\bf e_3}$.
We denote $H_{L}=H(p_0,p_{12},p_{43})$ by the matrix $({\bf e_{i}}^* \cdot H \cdot {\bf e_{j}})_{1 \leq i,j \leq 3}$, then 
$$H_{L}=\begin{pmatrix}
-1&-\rm{e}^{-2\theta\rm{i}}+\rm{e}^{\theta\rm{i}}-1&-\rm{e}^{2\theta\rm{i}}+\rm{e}^{-\theta\rm{i}}-1\\
-\rm{e}^{2\theta\rm{i}}+\rm{e}^{-\theta\rm{i}}-1 & -1& D_5	\\
-\rm{e}^{-2\theta\rm{i}}+\rm{e}^{\theta\rm{i}}-1& 	\overline{D}_5 & -1\\
\end{pmatrix},$$
where 
\begin{flalign} 	\nonumber &
D_5=-32 \cos^3(\theta)\rm{e}^{\theta\rm{i}}+6\rm{i}\sin(2 \theta)+2\rm{i}\sin( \theta)+24\cos^2(\theta)+4 \cos(\theta)-3.  \end{flalign}
Now $\det(H_{L})$ is 
$$-64\cos^6(\theta)+160\cos^5(\theta)+144\cos^4(\theta)-168\cos^3(\theta)-96\cos^2(\theta)+40\cos(\theta)+20.$$	So $H_{L}$ is the Hermitian form with signature $(2,1)$ on the subspace with the basis $\{ { \bf e_1}, { \bf e_2}, {\bf e_3} \}$ when $\theta \in (\frac{5\pi}{6}, \pi]$.

The vector $x_1 {\bf e_1}+x_2 {\bf e_2}+x_3 {\bf e_3}$ is denoted by the vector ${\bf x}$ 	in  $\mathbb{C}^{3}$, and the vector $y_1 {\bf e_1}+y_2 {\bf e_2}+y_3 {\bf e_3}$  is denoted by the vector ${\bf y}$ 	in  $\mathbb{C}^{3}$, here 
\begin{equation}\nonumber
{\bf x}=\left(
\begin{array}{c}
x_1 \\
x_2 \\
x_3 \\
\end{array}
\right),\quad
{\bf y}=\left(
\begin{array}{c}
y_1\\
y_2\\
y_3 \\
\end{array}
\right)
\end{equation}
with $x_{i},y_{i} \in \mathbb{C}$.
So \begin{equation}\nonumber
{\bf E_1}=\left(
\begin{array}{c}
1 \\
0 \\
0 \\
\end{array}
\right),\quad
{ \bf E_2}=\left(
\begin{array}{c}
0\\
1\\
0 \\
\end{array}
\right),\quad
{ \bf E_3}=\left(
\begin{array}{c}
0\\
0\\
1 \\
\end{array}
\right)
\end{equation}
	in  $\mathbb{C}^{3}$	present the vectors ${ \bf e_1}$, ${ \bf e_2}$ and ${ \bf e_3}$ in  $\mathbb{C}^{3,1}$.

%
%Here $\mathbf{x}^*H_{L}$ is a one-by-three matrix, so $\mathbf{x}^*H_{L}(1,2)$ is the second entry of   $\mathbf{x}^*H_{L}$.

Comparing to Subsection \ref{subsection:Spinalcoordinates}, we define  the   Hermitian cross-product $\boxtimes_{L}$ on the subspace   $H_{L}$ (isometric to ${\bf H}^{2}_{\mathbb{C}}$) with respect to the basis  $\{\bf{e_1}, \bf{e_2}, \bf{e_3}\}$ by
$$\mathbf{x} \boxtimes_{L}
\mathbf{y}=
\left[\begin{matrix}
\mathbf{x}^*H_{L}(1,2) \cdot \mathbf{y}^*H_{L}(1,3)-\mathbf{y}^*H_{L}(1,2) \cdot \mathbf{x}^*H_{L}(1,3)\\ \mathbf{x}^*H_{L}(1,3) \cdot \mathbf{y}^*H_{L}(1,1)-\mathbf{y}^*H_{L}(1,3) \cdot \mathbf{x}^*H_{L}(1,1)\\ \mathbf{x}^*H_{L}(1,1) \cdot \mathbf{y}^*H_{L}(1,2)-\mathbf{y}^*H_{L}(1,1) \cdot \mathbf{x}^*H_{L}(1,2)
\end{matrix}
\right].$$
Then the intersection  $B_{12}\cap B_{43} \cap L$ is  parameterized   by $V=V(z_1,z_2) \in \mathbb{C}^3$ with $\langle V,V \rangle <0$ with respect to the Hermitian form $H_{L}$.
Where
$$V={ \bf E_2}\boxtimes_{L} { \bf E_3}+z_1 \cdot { \bf E_1}\boxtimes_{L} { \bf E_3}+z_2 \cdot { \bf E_1}\boxtimes_{L} { \bf E_2}$$  and  $(z_1,z_2) =(\rm{e}^{r \rm{i}},\rm{e}^{s \rm{i}})\in \mathbb{S}^1\times \mathbb{S}^1$.
	The vector 	$V \in \mathbb{C}^3$ is a three-by-one matrix, and  $$V(1,1) \cdot { \bf e_1}+V(2,1) \cdot { \bf e_2}+V(3,1) \cdot { \bf e_3}$$ 
is a vector in in $\mathbb{C}^{3,1}$, we  denote it  by $W$. The projection of $W$ into ${\mathbb C}{\mathbf P}^{3}$ is a point in $B_{12}\cap B_{43} \cap L$ if $W$ is a negative vector.
Now  $W^{*}\cdot H \cdot W$ is a very complicated term. But with Maple, $$W^{*}\cdot H \cdot W=-2(1+2\cos(\theta))^3 \cdot h,$$ where $h$ is 
\begin{flalign} 	\nonumber &
2213-1992\cos(3\theta)+718\cos(r)+3041\cos(2\theta)+...
& \\
\nonumber	
&-3\cos(s-7\theta)-16
\cos(s+7\theta)+\cos(r-s-6\theta).&
&& \nonumber \end{flalign}
(We omit many terms of $h$).
Note that the first term of $W^{*}\cdot H \cdot W$ is positive when $\theta \in [\frac{5 \pi}{6}, \pi]$. 
By Maple, $h$ 
has minimum 1521.583 numerically when $r, s \in [-\pi,\pi]$ and $\theta \in [\frac{5 \pi}{6}, \pi]$ at a point when $\theta= \frac{5 \pi}{6}$. In particular, $W$ above is a  positive vector in $\mathbb{C}^{3,1}$, so $B_{12}\cap B_{43} \cap L=\emptyset$. Then by the projection from 
${\bf H}^{3}_{\mathbb{C}}$ to $L$, we have $B_{12}\cap B_{32}$ in ${\bf H}^{3}_{\mathbb{C}}$ is the empty set. This ends the  proof of   (\ref{item:B12B43empty3dim}).

%We note that 
%\begin{equation}\label{eq:2p2q12}
%E_1\boxtimes_{L}E_2=\left[
%\begin{array}{c}
%(1+2\cos(\theta))(8\cos^2(\theta)\rm{e}^{-\theta\rm{i}}+3\rm{i}\sin(2\theta)+3\rm{i}\sin(\theta)-14\cos(\theta)^2+5\cos(\theta)) \\
%3+8\cos^3(\theta)\rm{e}^{\theta\rm{i}}-10\cos^2(\theta)-\rm{i}\sin(2\theta)+2\cos(\theta)	 \\
%4\cos(\theta)(\cos(\theta)-1)(1+2\cos(\theta)) \\
%\end{array}
%\right],
%\end{equation}

%and 
%\begin{equation}\label{eq:2p2q12}
%E_1\boxtimes_{L}E_3=\left[
%\begin{array}{c}
%(1+2\cos(\theta))(-8\cos^2(\theta)\rm{e}^{\theta\rm{i}}+3\rm{i}\sin(2\theta)+3\rm{i}\sin(\theta)+14\cos(\theta)^2-5\cos(\theta))\\
%4\cos(\theta)(1-\cos(\theta))(1+2\cos(\theta)) \\
%-3-8\cos^3(\theta)\rm{e}^{-\theta\rm{i}}+10\cos^2(\theta)-\rm{i}\sin(2\theta)-2\cos(\theta)	 \\
%\end{array}
%\right].
%\end{equation}
%and
%\begin{equation}\label{eq:2p2q12}
%E_2\boxtimes_{L}E_3=\left[
%\begin{array}{c}
%(5-4\cos(\theta))(1+2\cos(\theta))(8\cos^2(\theta)-6\cos(\theta)-3)
%\\
%(1+2\cos(\theta))(8\cos^2(\theta)\rm{e}^{-\theta\rm{i}}+3\rm{i}\sin(2\theta)+3\rm{i}\sin(\theta)-14\cos(\theta)^2+5\cos(\theta))\\
%(1+2\cos(\theta))(8\cos^2(\theta)\rm{e}^{\theta\rm{i}}-3\rm{i}\sin(2\theta)-3\rm{i}\sin(\theta)-14\cos(\theta)^2+5\cos(\theta)) \\
%\end{array}
%\right].
%\end{equation}

Note that $B_{12}\cap B_{41}=J^{-1}(B_{12}\cap B_{23})$.  Since we have proved that $B_{12}\cap B_{23}=\emptyset$, we have $B_{12}\cap B_{41}=\emptyset$. 
 This ends the proof of   (\ref{item:B12B41empty3dim}).

For the proof of   (\ref{item:B12B24empty3dim}), so we consider $B_{12}\cap B_{24}$.
For any $\theta \in (\frac{5\pi}{6}, \pi]$,  the span of ${ \bf p_0}$, ${ \bf p_{12}}$, ${ \bf p_{24}}$ is a 3-dimensional subspace  of  $\mathbb{C}^{3,1}$. The intersection of ${\bf H}^{3}_{\mathbb{C}} \subset {\bf P}^{3}_{\mathbb{C}}$ with the projection of this 3-dimensional subspace  into ${\bf P}^{3}_{\mathbb{C}}$ is denoted by $L$, then  $L$ is a totally geodesic ${\bf H}^{2}_{\mathbb{C}} \hookrightarrow {\bf H}^{3}_{\mathbb{C}}$.

We re-denote $\bf{ p_0}$, $\bf{ p_{12}}$ and  $\bf{ p_{24}}$  by $\bf{ e_1}$, $\bf{ e_2}$ and $\bf{ e_3}$.
We denote $H_{L}=H(p_0,p_{12},p_{24})$ by the matrix $({\bf e_{i}}^* \cdot H\cdot {\bf e_{j}})_{1 \leq i,j \leq 3}$, then 
$$H_{L}=\begin{pmatrix}
-1&-\rm{e}^{-2\theta\rm{i}}+\rm{e}^{\theta\rm{i}}-1& 2\cos(\theta)\\
-\rm{e}^{2\theta\rm{i}}+\rm{e}^{-\theta\rm{i}}-1 & -1&	D_6\\
2\cos(\theta)& 	\overline{D}_{6} & -1\\
\end{pmatrix}, $$
where $$D_6=-8 \cos^2(\theta)\rm{e}^{\theta\rm{i}}-\rm{e}^{\theta\rm{i}}-\rm{e}^{-2\theta\rm{i}}-1.$$
Now $\det(H_{L})$ is 
$$32\cos^5(\theta)-24\cos^3(\theta)-4\cos^2(\theta)+4\cos(\theta)+1.$$	So $H_{L}$ is the Hermitian form with signature $(2,1)$ on the subspace with the basis $\{{\bf e_1}, {\bf e_2}, {\bf e_3}\}$ when $\theta \in (\frac{5\pi}{6}, \pi]$.

The vector $x_1 {\bf e_1}+x_2 {\bf e_2}+x_3 {\bf e_3}$ is denoted by the vector ${\bf x} \in \mathbb{C}^3$, and the vector $y_1 {\bf e_1}+y_2 {\bf e_2}+y_3 {\bf e_3}$  is denoted by the vector ${\bf y}\in \mathbb{C}^3$, here 
\begin{equation}\nonumber
{\bf x}=\left(
\begin{array}{c}
x_1 \\
x_2 \\
x_3 \\
\end{array}
\right),\quad
{\bf y}=\left(
\begin{array}{c}
y_1\\
y_2\\
y_3 \\
\end{array}
\right)
\end{equation}
with $x_{i},y_{i} \in \mathbb{C}$.
So \begin{equation}\nonumber
{\bf E_1}=\left(
\begin{array}{c}
1 \\
0 \\
0 \\
\end{array}
\right),\quad
{ \bf E_2}=\left(
\begin{array}{c}
0\\
1\\
0 \\
\end{array}
\right),\quad
{ \bf E_3}=\left(
\begin{array}{c}
0\\
0\\
1 \\
\end{array}
\right)
\end{equation}
	in  $\mathbb{C}^{3}$	present the vectors ${ \bf e_1}$, ${ \bf e_2}$ and ${ \bf e_3}$ in  $\mathbb{C}^{3,1}$.

Comparing to Subsection \ref{subsection:Spinalcoordinates}, we define  the   Hermitian cross-product $\boxtimes_{L}$ on the subspace  $H_{L}$ (which is  isometric to ${\bf H}^{2}_{\mathbb{C}}$) with respect to  the basis  $\{\bf{e_1}, \bf{e_2}, \bf{e_3}\}$ by
$$\mathbf{x} \boxtimes_{L}
\mathbf{y}=
\left[\begin{matrix}
\mathbf{x}^*H_{L}(1,2) \cdot \mathbf{y}^*H_{L}(1,3)-\mathbf{y}^*H_{L}(1,2) \cdot \mathbf{x}^*H_{L}(1,3)\\ \mathbf{x}^*H_{L}(1,3) \cdot \mathbf{y}^*H_{L}(1,1)-\mathbf{y}^*H_{L}(1,3) \cdot \mathbf{x}^*H_{L}(1,1)\\ \mathbf{x}^*H_{L}(1,1) \cdot \mathbf{y}^*H_{L}(1,2)-\mathbf{y}^*H_{L}(1,1) \cdot \mathbf{x}^*H_{L}(1,2)
\end{matrix}
\right].$$

Then the intersection  $B_{12} \cap B_{24} \cap L$ is  parameterized  by $V=V(z_1,z_2) \in \mathbb{C}^3$ with $\langle V,V \rangle <0$ with respect to the Hermitian form $H_{L}$.
Where
$$V={ \bf E_2}\boxtimes_{L} { \bf E_3}+z_1 \cdot { \bf E_1}\boxtimes_{L} { \bf E_3}+z_2 \cdot { \bf E_1}\boxtimes_{L} { \bf E_2}$$  and  $(z_1,z_2) =(\rm{e}^{r \rm{i}},\rm{e}^{s \rm{i}}) \in \mathbb{S}^1 \times \mathbb{S}^1$.
The vector 	$V \in \mathbb{C}^3$	is a three-by-one matrix,   we denote by  $V(i,1)$ the  entry in the $i$-th  row of $V$. Then  $$V(1,1) \cdot { \bf e_1}+V(2,1) \cdot { \bf e_2}+V(3,1) \cdot { \bf e_3}$$   is a vector in $\mathbb{C}^{3,1}$, we also denote it  by $W$. The projection of $W$ into ${\mathbb C}{\mathbf P}^{3}$ is a point in $B_{12}\cap B_{24} \cap L$ if $W$ is a negative vector.
Now  $W^{*}\cdot H \cdot W$ is a very complicated term. But with Maple, $$W^{*}\cdot H \cdot W=-(1+2\cos(\theta))^3\cdot h,$$
where $h$ is 
\begin{flalign} 	\nonumber &
799-798\cos(3\theta)+1180\cos(2\theta)-222\cos(5\theta)-24\cos(7\theta)-86\cos(s-4\theta)...
& \\
\nonumber	
&236\cos(r-2\theta)+110\cos(r+2\theta).&
&& \nonumber \end{flalign}
(We omit many terms of $h$).
Note that the first term  of $W^{*}\cdot H \cdot W$ is positive when $\theta \in [\frac{5 \pi}{6}, \pi]$. 
By Maple, $h$ has  minimum 275.152 numerically when $r, s \in [-\pi,\pi]$ and $\theta \in [\frac{5 \pi}{6}, \pi]$ at a point when $\theta= \frac{5 \pi}{6}$. In particular, any $W$ above is a  positive vector in $\mathbb{C}^{3,1}$, so $B_{12}\cap B_{24} \cap L=\emptyset$. Then by the projection from 
${\bf H}^{3}_{\mathbb{C}}$ to $L$, we have $B_{12}\cap B_{24}$ in ${\bf H}^{3}_{\mathbb{C}}$ is the empty set. This ends the  proof of   (\ref{item:B12B24empty3dim}).

Now we consider (\ref {item:B12nonempty3dim}). Recall in Lemma  \ref{lemma:123413coplane}, we have proved that ${\bf p_0}$, ${\bf p_{12}}$, ${\bf p_{34}}$ and ${\bf p_{13}}$ are co-planar.  
So for any $\theta \in (\frac{5\pi}{6}, \pi]$,  the span of ${\bf p_0}$, ${\bf p_{12}}$, ${\bf p_{34}}$ and ${\bf p_{13}}$ is a 3-dimensional subspace  of  $\mathbb{C}^{3,1}$. The intersection of ${\bf H}^{3}_{\mathbb{C}} \subset {\bf P}^{3}_{\mathbb{C}}$ with the projection of this 3-dimensional subspace  into ${\bf P}^{3}_{\mathbb{C}}$ is denoted by $L$, then  $L$ is a totally geodesic ${\bf H}^{2}_{\mathbb{C}} \hookrightarrow {\bf H}^{3}_{\mathbb{C}}$.

By definition $p_0 \in L$. We will show if the intersection  $B_{12}\cap B_{34} \cap L$ is non-empty, then it is  a  Giraud disk  in $L$. Moreover, in this case,  $B_{12}\cap B_{34}\cap L$ lies in the half space of  $L-B_{13}$  which does not contain the fixed point $p_0$ of $J$. In particular,  $B_{12}\cap B_{34}$ does not lie in the partial Dirichlet domain $D_{R}$.

We re-denote ${\bf p_0}$, ${\bf p_{12}}$ and  ${\bf p_{34}}$  by ${\bf e_1}$, ${\bf e_2}$ and ${\bf e_3}$.
We denote $H_{L}=H(p_0,p_{12},p_{34})$ by the matrix $( {\bf e_{i}}^* \cdot H \cdot {\bf e_{j}})_{1 \leq i,j \leq 3}$, then 
$$H_{L}=\begin{pmatrix}
-1&-\rm{e}^{-2\theta\rm{i}}+\rm{e}^{\theta\rm{i}}-1& -\rm{e}^{-2\theta\rm{i}}+\rm{e}^{\theta\rm{i}}-1\\
-\rm{e}^{2\theta\rm{i}}+\rm{e}^{-\theta\rm{i}}-1 & -1&	16 \cos^3(\theta)-8\cos(\theta)-3\\
-\rm{e}^{2\theta\rm{i}}+\rm{e}^{-\theta\rm{i}}-1& 	16 \cos^3(\theta)-8\cos(\theta)-3 & -1\\
\end{pmatrix}. $$
Now $\det(H_{L})$ is 
$$128\cos^5(\theta)-96\cos^3(\theta)-16\cos^2(\theta)+16\cos(\theta)+4.$$	So $H_{L}$ is the Hermitian form with signature $(2,1)$ on the subspace with the basis  $\{\bf{e_1}, \bf{e_2}, \bf{e_3}\}$ when $\theta \in (\frac{5\pi}{6}, \pi]$.

 The vector $x_1 {\bf e_1}+x_2 {\bf e_2}+x_3 {\bf e_3}$ is denoted by the vector ${\bf x}$, and the vector $y_1 {\bf e_1}+y_2 {\bf e_2}+y_3 {\bf e_3}$  is denoted by the vector ${\bf y}$, here 
\begin{equation}\nonumber
{\bf x}=\left(
\begin{array}{c}
x_1 \\
x_2 \\
x_3 \\
\end{array}
\right),\quad
{\bf y}=\left(
\begin{array}{c}
y_1\\
y_2\\
y_3 \\
\end{array}
\right)
\end{equation}
with $x_{i},y_{i} \in \mathbb{C}$.
So \begin{equation}\nonumber
{\bf E_1}=\left(
\begin{array}{c}
1 \\
0 \\
0 \\
\end{array}
\right),\quad
{ \bf E_2}=\left(
\begin{array}{c}
0\\
1\\
0 \\
\end{array}
\right),\quad
{ \bf E_3}=\left(
\begin{array}{c}
0\\
0\\
1 \\
\end{array}
\right)
\end{equation}
	in  $\mathbb{C}^{3}$	present the vectors ${ \bf e_1}$, ${ \bf e_2}$ and ${ \bf e_3}$ in  $\mathbb{C}^{3,1}$.

Comparing to Subsection \ref{subsection:Spinalcoordinates}, we define  the   Hermitian cross-product $\boxtimes_{L}$ on the subspace  $H_{L}$ (which is isometric to ${\bf H}^{2}_{\mathbb{C}}$)  with respect to the basis   $\{\bf{e_1}, \bf{e_2}, \bf{e_3}\}$ by
$$\mathbf{x} \boxtimes_{L}
\mathbf{y}=
\left[\begin{matrix}
\mathbf{x}^*H_{L}(1,2) \cdot \mathbf{y}^*H_{L}(1,3)-\mathbf{y}^*H_{L}(1,2) \cdot \mathbf{x}^*H_{L}(1,3)\\ \mathbf{x}^*H_{L}(1,3) \cdot \mathbf{y}^*H_{L}(1,1)-\mathbf{y}^*H_{L}(1,3) \cdot \mathbf{x}^*H_{L}(1,1)\\ \mathbf{x}^*H_{L}(1,1) \cdot \mathbf{y}^*H_{L}(1,2)-\mathbf{y}^*H_{L}(1,1) \cdot \mathbf{x}^*H_{L}(1,2)
\end{matrix}
\right].$$

Then the intersection  $B_{12}\cap B_{34} \cap L$ is  parameterized  by $V=V(z_1,z_2) \in \mathbb{C}^3$ with $\langle V,V \rangle <0$ with respect to the Hermitian form $H_{L}$.
Where
$$V={ \bf E_2}\boxtimes_{L} {\bf E_3}+z_1 \cdot { \bf E_1}\boxtimes_{L} { \bf E_3}+z_2 \cdot { \bf E_1}\boxtimes_{L} { \bf E_2}$$  and  $(z_1,z_2) =(\rm{e}^{r \rm{i}},\rm{e}^{s \rm{i}})\in \mathbb{S}^1 \times \mathbb{S}^1$.
The vector 	$V \in \mathbb{C}^3$ is a three-by-one matrix, and  $$V(1,1) \cdot { \bf e_1}+V(2,1) \cdot { \bf e_2}+V(3,1) \cdot { \bf e_3}$$   is a vector  in $\mathbb{C}^{3,1}$, we also denote it  by $W$.
If $W$ is a negative vector, we consider the distances of $[W] \in {\bf H}^{3}_{\mathbb{C}}$  to $p_0$ and $p_{13}$. That is, we should consider  $|W^{*} \cdot  H \cdot \mathbf{p_0}|^2$ and  $|W^{*} \cdot  H \cdot \mathbf{p_{13}}|^2$.
Now  $W^{*}\cdot H \cdot W$ is a very complicated term. But with Maple, 
the minimum of $$|W^{*} \cdot  H \cdot \mathbf{p_0}|^2-|W^{*} \cdot  H \cdot \mathbf{p_{13}}|^2$$ with the condition 
$W^{*}\cdot H \cdot W=0$ is 99.42 when $r, s \in [-\pi,\pi]$ and $\theta \in [\frac{5 \pi}{6}, \pi]$ at a point when $\theta= \frac{5 \pi}{6}$.
And the maximum of $$|W^{*} \cdot  H \cdot \mathbf{p_0}|^2-|W^{*} \cdot  H \cdot \mathbf{p_{13}}|^2$$ with the condition 
$W^{*}\cdot H \cdot W=0$ is 921.79
 when $r, s \in [-\pi,\pi]$ and $\theta \in [\frac{5 \pi}{6}, \pi]$ at a point $\theta= 2.734$.
This implies that  $$B_{12}\cap B_{34}\cap L\cap \partial{\bf H}^{3}_{\mathbb{C}}$$ does not intersect $B_{13}$. Moreover, if the quadruple intersection  above is non-empty, then it lies in the half-space of $L - B_{13}$ which does not contain $p_0$. Then by the well-known properties of intersections of bisectors in ${\bf H}^{2}_{\mathbb{C}}$, we have  $$B_{12}\cap B_{34} \cap B_{13}\cap L=\emptyset.$$
Moreover, if  the triple  intersection $B_{12}\cap B_{34} \cap L$ is non-empty, then it lies in the half-space of $L - B_{13}$ which does not contain $p_0$.
 Then by the projection from 
${\bf H}^{3}_{\mathbb{C}}$ to $L$, we have $$B_{12}\cap B_{34} \cap B_{13}=\emptyset$$ in
 ${\bf H}^{3}_{\mathbb{C}}$.
 Moreover, if the  intersection   $B_{12}\cap B_{34} $ is non empty,  then it lies in the half-space of ${\bf H}^{3}_{\mathbb{C}} - B_{13}$ which does not contain $p_0$. So   $B_{12}\cap B_{34}$ does not lie in the partial Dirichlet domain $D_{R}$ even if it is not empty.

The author remarks that when $\theta$ is near to $\frac{5 \pi}{6}$,  for instance we can take a sample point $$\{r= \pi,~~s=0, ~~\theta=\frac{5 \pi}{6}\},$$ then $W^{*}\cdot H \cdot W$ is $-32\sqrt{3}-32$, which is negative,  and then $B_{12}\cap B_{34}\cap L$ is non-empty.
But  when $\theta=\pi$, it is not difficult to show $B_{12}\cap B_{34}\cap L=\emptyset$. 
So there is a neighborhood of $\pi$ in $[\frac{5 \pi}{6}, \pi]$, such that $B_{12}\cap B_{34}\cap L=\emptyset$ when $\theta$ lies in this neighborhood.
It seems that when  $\theta \in [2.74, \pi]$, then $W^{*}\cdot H \cdot W$ is always positive.  When $\theta=2.71$,     the graph of $W^{*}\cdot H \cdot W=h$ as a function of $(r,s)$ is the blue surface in Figure \ref{figure:B12capB343dim}.
This ends the proof of  (\ref {item:B12nonempty3dim}).

 \begin{figure}
 	\begin{center}
 		\begin{tikzpicture}
 		\node at (0,0) {\includegraphics[width=6cm,height=6cm]{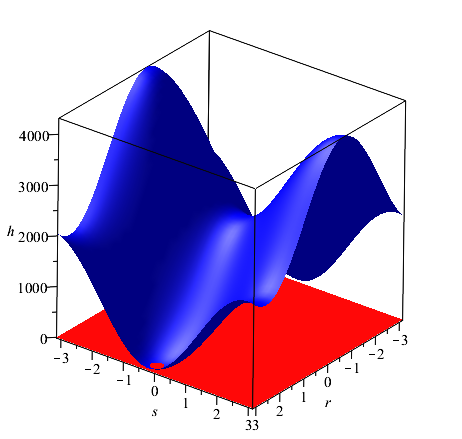}};
 		\end{tikzpicture}
 	\end{center}
 	\caption{An illustration $B_{12}\cap B_{34}\cap L=\emptyset$ when $\theta$ is near to $\pi$. 
 		The blue surface is the graph of the function  $h(r,s)=W^{*}\cdot H \cdot W$ when $\theta=2.71$. The red plane is $h=0$. They intersect in a small circle near $(r,s)=(\pi,0)$.  When increasing $\theta$ from $2.71$ to $\pi$, the graph of the function $W^{*}\cdot H \cdot W$  is also a similar blue surface   which tends to disjoint from the red plane $h=0$.}
 	\label{figure:B12capB343dim}
 \end{figure}

 \end{proof}

\begin{prop}\label{prop:B133dim} For all $\theta \in (\frac{5 \pi}{6}, \pi]$, the bisector $B_{13}$ of $I_1I_3$, we have $B_{13}$ does not intersect  $B_{21}$, $B_{23}$, $B_{43}$,  $B_{41}$ and $B_{24}$.
\end{prop}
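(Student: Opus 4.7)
The plan is to split the five claims into three groups handled by different techniques. The first group exploits the $\mathbb{Z}_4$-symmetry coming from $J$. Since $J I_i J^{-1}=I_{i+1}$ (indices mod $4$), one has $J(p_{ij})=p_{i+1,j+1}$, and because $I_2, I_4$ commute (as $(I_2I_4)^2=\mathrm{id}$), one also has $J(p_{13})=p_{24}$ with $J^2$ fixing each. From this it follows that $J^{-1}(B_{13}\cap B_{23})=B_{24}\cap B_{12}$ and $J(B_{13}\cap B_{41})=B_{24}\cap B_{12}$, so emptiness of $B_{13}\cap B_{23}$ and $B_{13}\cap B_{41}$ drops out immediately from item~(\ref{item:B12B24empty3dim}) of Proposition~\ref{prop:B123dim}. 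Likewise $J^2$ preserves $B_{13}$ and exchanges $B_{21}$ with $B_{43}$, so $B_{13}\cap B_{21}=\emptyset$ implies $B_{13}\cap B_{43}=\emptyset$ for free.

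For the remaining generic case $B_{13}\cap B_{21}=\emptyset$, I would mimic the template of Proposition~\ref{prop:B123dim}. Namely, verify first that $\mathbf{p_0}, \mathbf{p_{13}}, \mathbf{p_{21}}$ are linearly independent in $\mathbb{C}^{3,1}$ for all $\theta\in(\frac{5\pi}{6},\pi]$; let $L$ be the totally geodesic ${\bf H}^2_{\mathbb C}\hookrightarrow{\bf H}^3_{\mathbb C}$ they span; compute the restricted Hermitian form $H_L=(\langle\mathbf{e_i},\mathbf{e_j}\rangle)$ on the basis $\{\mathbf{e_1},\mathbf{e_2},\mathbf{e_3}\}=\{\mathbf{p_0},\mathbf{p_{13}},\mathbf{p_{21}}\}$ and check that $\det(H_L)$ keeps signature $(2,1)$ throughout the parameter range. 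Define the $L$-Hermitian cross product $\boxtimes_L$ as in Subsection~\ref{subsection:TripleHermitiancrossproduct} (using the $3\times 3$ form $H_L$), parameterize
\[
V(z_1,z_2)=\mathbf{E_2}\boxtimes_L\mathbf{E_3}+z_1\cdot\mathbf{E_1}\boxtimes_L\mathbf{E_3}+z_2\cdot\mathbf{E_1}\boxtimes_L\mathbf{E_2},\qquad (z_1,z_2)=(e^{ri},e^{si})\in\mathbb{S}^1\times\mathbb{S}^1,
\]
and lift to $W=V(1,1)\mathbf{e_1}+V(2,1)\mathbf{e_2}+V(3,1)\mathbf{e_3}\in\mathbb{C}^{3,1}$. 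A Maple minimization should then confirm $W^{*}HW>0$ uniformly on $[-\pi,\pi]^2\times[\frac{5\pi}{6},\pi]$, with (as in the previous proposition) the minimum attained at the boundary value $\theta=\frac{5\pi}{6}$. This rules out $B_{13}\cap B_{21}\cap L=\emptyset$, and projection from ${\bf H}^3_{\mathbb C}$ to $L$ rules it out in ${\bf H}^3_{\mathbb C}$.

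The case $B_{13}\cap B_{24}=\emptyset$ admits a much cleaner geometric argument that parallels the end of Proposition~\ref{prop:B13}. From the explicit coordinates one reads off that $p_0=[0,0,0,1]^t$, $p_{13}=[\sqrt{2\cos(2\theta)+1},0,0,-2\cos\theta]^t$ and $p_{24}=J(p_{13})=[-\sqrt{2\cos(2\theta)+1},0,0,-2\cos\theta]^t$ all lie in the complex line $l=\{[z_1,0,0,z_4]^t\}\subset{\bf H}^3_{\mathbb C}$; equivalently, the Gram matrix of $\{\mathbf{p_0},\mathbf{p_{13}},\mathbf{p_{24}}\}$ has determinant $0$. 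Since $l$ is the common complex spine of both bisectors $B_{13}$ and $B_{24}$, Mostow's complex-slice decomposition shows that each bisector is the orthogonal preimage of its real spine (a real geodesic inside $l\cong{\bf H}^1_{\mathbb C}$) under the projection onto $l$. Hence $B_{13}\cap B_{24}=\emptyset$ in ${\bf H}^3_{\mathbb C}$ if and only if these two real spines are disjoint in $l$, and the latter follows from the $\mathbb{Z}_2$-symmetry $[z_1,z_2,z_3,z_4]\mapsto[-z_1,z_2,z_3,z_4]$, which fixes $p_0$ pointwise and swaps $p_{13}\leftrightarrow p_{24}$: the two real spines are reflections of each other in the fixed geodesic of this involution, and $p_0$ is not equidistant from $p_0$ and $p_{13}$, so neither spine meets the fixed axis.

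The main obstacle is the Maple verification in the middle step, since $H_L$, $\boxtimes_L$ and $W^{*}HW$ are considerably more involved than in the corresponding two-dimensional computation of Proposition~\ref{prop:B13}; all the other emptiness assertions reduce either to $\mathbb{Z}_4$-symmetry or to an elementary bisector comparison inside a single complex geodesic.
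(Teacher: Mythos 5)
Your proposal follows essentially the same route as the paper's proof: the $\mathbb{Z}_4$-symmetry reduces $B_{13}\cap B_{23}$ and $B_{13}\cap B_{41}$ to $B_{12}\cap B_{24}=\emptyset$; the restricted form $H_L$ together with a Maple minimization handles $B_{13}\cap B_{21}$ (the paper treats $B_{13}\cap B_{43}$ by the same computation, so your $J^2$-reduction is a small economy, and it is correct since $J^2$ fixes $B_{13}$ and sends $B_{21}$ to $B_{43}$); and $B_{13}\cap B_{24}$ is handled, exactly as in the paper, by observing that $p_0$, $p_{13}$, $p_{24}$ lie in a common complex line $l$ and projecting ${\bf H}^3_{\mathbb C}$ onto $l$.

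The one genuine flaw is in your justification that the two real spines in $l$ are disjoint. The involution $[z_1,z_2,z_3,z_4]\mapsto[-z_1,z_2,z_3,z_4]$ restricted to $l\cong{\bf H}^1_{\mathbb C}$ is $w\mapsto -w$, a holomorphic half-turn whose fixed-point set is the single point $p_0$; it is not a reflection in a geodesic, so the ``fixed geodesic'' and ``fixed axis'' you invoke do not exist, and the sentence built on them does not parse as a proof. The conclusion survives with a corrected argument: the two spines are distinct geodesics of $l$ (if they coincided, the half-turn would preserve that geodesic and hence have its fixed point $p_0$ on it, contradicting $d(p_0,p_0)=0<d(p_0,p_{13})$), and they are swapped by the half-turn, so any common point $x$ would give a second common point $\sigma(x)$, forcing $\sigma(x)=x$ and hence $x=p_0$, which lies on neither spine. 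Alternatively, and closer to what the paper means by ``it is easy to see,'' both spines are geodesics of $l$ meeting the real axis orthogonally at the hyperbolic midpoints of $[0,w_{13}]$ and $[0,-w_{13}]$, two distinct points, so they are ultraparallel. With that repair your write-up matches the paper's proof.
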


\begin{proof}We have proved $B_{12}\cap B_{24}=\emptyset$, by the $\langle J \rangle= \mathbb{Z}_4$ symmetry, we have $$B_{13}\cap B_{23}=\emptyset, ~~~~B_{13}\cap B_{41}=\emptyset.$$
The fact 	$$B_{13}\cap B_{21}=\emptyset, ~~~B_{13}\cap B_{43}=\emptyset$$ can be proved similar to the proof of  $B_{12}\cap B_{24}=\emptyset$ in Proposition \ref{prop:B123dim}.

 Now we  consider $B_{13}\cap B_{24}$. Since $$p_{13}=[\sqrt{2 \cos(2 \theta)+1},~~0,~~0, ~~-2 \cos(\theta)]^{t},$$
	and $$p_{24}=[-\sqrt{2 \cos(2 \theta)+1},~~0,~~0, -2 \cos(\theta)]^{t}. $$ So $p_0$, $p_{13}$ and $p_{24}$ lie in the $\mathbb{C}$-line 
		$$l=\left\{	\left[z_1,~~0,~~0, ~~1\right]^{t} \in {\bf H}^3_{\mathbb C}\right\}.$$
		Now it is easy to see $B_{13}\cap B_{24} \cap l=\emptyset$. Then from the projection of $ {\bf H}^3_{\mathbb C}$ to $l$, we get  $B_{13}\cap B_{24}=\emptyset$.

\end{proof}

\begin{prop}\label{prop:B12intersection3dim} For all $\theta \in (\frac{5 \pi}{6}, \pi]$, the bisector $B_{12}$ of $I_1I_2$, we have
	\begin{enumerate}
		
		\item  \label{item:B12andB13andB143dim} the triple intersection $B_{12}\cap B_{13} \cap B_{14}$ is a non empty 3-dimensional object;
		
		\item the intersection  $B_{12}\cap B_{13}$ is a Giraud disk, which is a 4-ball in ${\bf H}^{3}_{\mathbb{C}}$;
		
			\item the intersection   $B_{12}\cap B_{14}$ is a Giraud disk, which is a 4-ball in ${\bf H}^{3}_{\mathbb{C}}$;
				\item the intersection   $B_{13}\cap B_{14}$ is a Giraud disk, which is a 4-ball in ${\bf H}^{3}_{\mathbb{C}}$.

	%	\item  \label{item:B12andB13giraud3dim}$B_{12}$ intersects   $B_{13}$ in a Giraud disk.  This disk intersects $B_{14}$ in a 3-ball;
		
	%	\item  \label{item:B13andB14giraud3dim}$B_{13}$ intersects   $B_{14}$ in a Giraud disk.  This disk intersects $B_{12}$ in a 3-ball;
		
	%	\item  \label{item:B12B13B143dim}The triple intersection $B_{12} \cap B_{14}\cap B_{13}$ is a 3-ball  in the Giraud disks $B_{12} \cap B_{14}$ and $B_{12} \cap B_{13}$.
	\end{enumerate}
\end{prop}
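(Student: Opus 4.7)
The plan is to use the triple Hermitian cross product parameterization developed in Subsection \ref{subsection:TripleHermitiancrossproduct} as the main tool, in direct analogy with how Proposition \ref{prop:B12intersection} treated the ${\bf H}^2_{\mathbb{C}}$-case via the double Hermitian cross product. By Lemma \ref{lemma:noncoplane}, for every $\theta \in (\frac{5\pi}{6}, \pi]$ the four lifts $\mathbf{p_0}, \mathbf{p_{12}}, \mathbf{p_{13}}, \mathbf{p_{14}}$ are linearly independent in $\mathbb{C}^{3,1}$, so formula (\ref{equation:tripleintersection}) applies: the triple intersection $B_{12}\cap B_{13}\cap B_{14}$ is the projectivization of those vectors
\[
V(z_1,z_2,z_3)=\mathbf{p_{12}}\boxtimes\mathbf{p_{13}}\boxtimes\mathbf{p_{14}}+z_1\,\mathbf{p_0}\boxtimes\mathbf{p_{13}}\boxtimes\mathbf{p_{14}}+z_2\,\mathbf{p_0}\boxtimes\mathbf{p_{12}}\boxtimes\mathbf{p_{14}}+z_3\,\mathbf{p_0}\boxtimes\mathbf{p_{12}}\boxtimes\mathbf{p_{13}}
\]
with $(z_1,z_2,z_3)=(\mathrm{e}^{r\mathrm{i}},\mathrm{e}^{s\mathrm{i}},\mathrm{e}^{t\mathrm{i}})\in\mathbb{S}^1\times\mathbb{S}^1\times\mathbb{S}^1$ for which $\langle V,V\rangle<0$.

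For part (\ref{item:B12andB13andB143dim}), I would expand $\langle V(z_1,z_2,z_3),V(z_1,z_2,z_3)\rangle$ as an explicit trigonometric polynomial in $(r,s,t,\theta)$ using Maple, and then exhibit a specific sample parameter at which it is strictly negative for every $\theta\in[\frac{5\pi}{6},\pi]$. A natural candidate comes from the real hyperbolic case $\theta=\pi$ analyzed in Section \ref{sec:Dirich3dimreal}, where the ridge $s_{12}\cap s_{13}\cap s_{14}\subset {\bf H}^3_{\mathbb R}$ is already known to be non-empty from Proposition \ref{thm:3dimrealdirichlet} (compare Figure \ref{figure:22inftyrealdirichletabstract}); since $\langle V,V\rangle$ depends continuously on $\theta$, negativity at one interior sample point persists on an open neighborhood, and a numerical minimum check on the compact domain $[-\pi,\pi]^3\times[\frac{5\pi}{6},\pi]$ would close the gap. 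Openness of $\{\langle V,V\rangle<0\}$ in $\mathbb{S}^1\times\mathbb{S}^1\times\mathbb{S}^1$ then gives a non-empty $3$-dimensional submanifold.

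For parts (2), (3) and (4), non-emptiness is immediate: each pairwise intersection contains the triple intersection produced in part (1). To upgrade this to the Giraud-disk/$4$-ball conclusion I would use Mostow's complex-slice fibration of a bisector. Fix $B_{12}$ and consider its complex slices $C_{w}=(\overline{w}\,\mathbf{p_0}-\mathbf{p_{12}})^{\perp}$, each a totally geodesic $\mathbf{H}^2_{\mathbb{C}}\hookrightarrow\mathbf{H}^3_{\mathbb{C}}$, as $w$ ranges over an arc in $\mathbb{S}^1$. Within each slice $C_w$, the set $C_w\cap B_{13}$ is a bisector between two points of $C_w$ and is therefore a smooth $3$-ball by standard bisector theory in $\mathbf{H}^2_{\mathbb{C}}$; fibering these $3$-balls over the $1$-dimensional arc of $w$-values for which the slice meets $B_{13}$ assembles into a $4$-ball, and analogously for $B_{12}\cap B_{14}$ and $B_{13}\cap B_{14}$. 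The main obstacle in the whole plan is the uniform-in-$\theta$ sign analysis required in part (1): unlike the $2$-dimensional Maple estimates of Section \ref{sec:Dirich2dim}, we now have three angular variables plus the deformation parameter $\theta$, so the minimum computation is substantially heavier, and care must be taken to ensure the chosen sample point really remains in the open negative locus throughout the entire interval $(\frac{5\pi}{6},\pi]$ and not only near its endpoints.
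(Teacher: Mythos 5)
Your strategy for part (\ref{item:B12andB13andB143dim}) is essentially the paper's: the author also parameterizes $B_{12}\cap B_{13}\cap B_{14}$ by the triple Hermitian cross product $V(z_1,z_2,z_3)$ of (\ref{equation:tripleintersection}) and exhibits a sample point, namely $(r,s,t)=(-\pi,0,\pi)$ (exactly the real-hyperbolic guess you suggest). The execution in the paper is cleaner than what you sketch, and you should note why: $\langle V,V\rangle$ factors as $\bigl(16\cos^3\theta+8\cos^2\theta-4\cos\theta-2\bigr)\cdot h$ with the first factor negative on $(\frac{5\pi}{6},\pi]$, and at the sample point $h$ collapses to the explicit cubic $-4\cos^3\theta-2\cos^2\theta+3\cos\theta+\frac32$ in $\cos\theta$ alone, which is positive on the whole interval; no four-variable numerical minimization is needed. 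Your proposed ``numerical minimum check on the compact domain $[-\pi,\pi]^3\times[\frac{5\pi}{6},\pi]$'' is in fact not the right certificate: a global minimum being negative only shows non-emptiness for \emph{some} $\theta$, whereas the statement requires, for \emph{every} $\theta$, a point where $\langle V,V\rangle<0$. The correct (and one-variable) check is the one you mention in passing at the end --- verify that the fixed sample point stays in the negative locus as $\theta$ runs over the whole interval --- so make that the actual argument rather than an afterthought.

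For parts (2)--(4), your observation that non-emptiness is immediate from part (1) matches the paper, which simply declares the rest ``trivial.'' However, your slice-fibration justification of the $4$-ball structure contains a false step: for a complex slice $C_w=(\overline{w}\,\mathbf{p_0}-\mathbf{p_{12}})^{\perp}$ of $B_{12}$, the intersection $C_w\cap B_{13}$ is the locus $|\langle x,\Pi\mathbf{p_0}\rangle|=|\langle x,\Pi\mathbf{p_{13}}\rangle|$ where $\Pi$ is orthogonal projection onto the subspace defining $C_w$; the projected vectors need not have equal norms (nor be negative), so this locus is \emph{not} a bisector between two points of $C_w$, and ``standard bisector theory'' does not directly apply slice by slice. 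The disk structure of a coequidistant double intersection is Giraud's theorem as presented in \cite{Go}, and that is what should be cited; alternatively, note (as the paper does after Propositions \ref{prop:s123dim} and \ref{prop:s133dim}) that only non-emptiness of these facets is actually used in the Poincar\'e polyhedron argument, so the ball claim carries no logical weight in the proof of Theorem \ref{thm:complex3dim}.
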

\begin{proof} In (\ref{equation:tripleintersection}), we take ${\bf p_0}={ \bf q_0}$, ${\bf p_{12}}={\bf q_1}$, ${\bf p_{13}}={\bf q_2}$ and  ${\bf p_{14}}={\bf q_3}$, then we can  parameterize  $B(p_0, p_{12}) \cap B(p_0, p_{13}) \cap B(p_0, p_{14})$ by 
	\begin{equation} \label{item:B12andB13abdB143dim}
	 V(z_1,z_2,z_3)={\bf p_{12}} \boxtimes {\bf  p_{13} }\boxtimes {\bf p_{14}}+z_1 \cdot {\bf p_0} \boxtimes {\bf  p_{13}} \boxtimes {\bf p_{14}}+z_2 \cdot {\bf p_0} \boxtimes {\bf p_{12}} \boxtimes {\bf p_{14}}+z_3 \cdot {\bf p_0} \boxtimes {\bf p_{12}}\boxtimes {\bf p_{13}}\end{equation}
in $\mathbb{C}^{3,1}$	with $(z_1,z_2,z_3)=(\rm{e}^{r\rm{i}},\rm{e}^{s\rm{i}},\rm{e}^{t\rm{i}}) \in \mathbb{S}^1 \times\mathbb{S}^1 \times\mathbb{S}^1$ such that  $\langle V(z_1,z_2,z_3), V(z_1,z_2,z_3)\rangle$ is negative.
	
	Now 
	$$\langle  V(z_1,z_2,z_3),V(z_1,z_2,z_3)\rangle=(16\cos(\theta)^3+8\cos(\theta)^2-4\cos(\theta)-2) \cdot h,$$ 
	with
	 \begin{flalign} \nonumber &
	h= -2+2\cos(s)+4\cos(\theta)+2\cos(r-t)-\cos(-\theta+r-t)-\cos(\theta+r-t)
	 & \\
	 \nonumber	&-\cos(-\theta+s)-\cos(\theta+s)-\cos(-2\theta-s+r)+\cos(2\theta-s+r)-\cos(-2\theta+s-t)& \\
	 \nonumber	&+\cos(2\theta+s-t)+\cos(-2\theta+r)-\cos(2\theta+r)-\cos(-2\theta+t)+\cos(2\theta+t)&
	 \\
	 \nonumber &-4\cos(2\theta)+\cos(-s+r-\theta)-\cos(-s+r+\theta)+\cos(s-t-\theta)-\cos(s-t+\theta)&
	  \\
	 \nonumber &-\cos(-\theta+r)+\cos(\theta+r)+\cos(-\theta+t)-\cos(\theta+t)+\cos(-s+r-3\theta)&
	  \\
	 &+\cos(s-t-3\theta)+\cos(r+3\theta)+\cos(t-3\theta).&
	 && \nonumber \end{flalign}
The term $$16\cos(\theta)^3+8\cos(\theta)^2-4\cos(\theta)-2$$ is always negative when $\theta \in ( \frac{5 \pi}{6}, \pi]$. 
	When $r= -\pi$, $s=0$ and $t=\pi$, the term $h$ is  $$-4\cos^3(\theta)-2\cos^2(\theta)+3\cos(\theta)+\frac{3}{2},$$
	which is always positive when $\theta \in (\frac{5 \pi}{6}, \pi]$. 
	This means that  $V=V(\rm{e}^{-\pi\rm{i}},\rm{e}^{0\rm{i}},\rm{e}^{\pi\rm{i}})$ is a point in ${\bf H}^3_{\mathbb C}$ for any $\theta \in (\frac{5 \pi}{6}, \pi]$.  So $B_{12}\cap B_{13}\cap B_{14}$ is non-empty for  any $\theta$.  Then for any  $(r,s,t)$ in  a small neighborhood (depends on $\theta$) of  $(-\pi,0,  \pi)$ in $\mathbb{R}^3$, the corresponding $V$ is also a  negative vector.  This proves (\ref{item:B12andB13andB143dim}) of Proposition \ref{prop:B12intersection3dim}.  The other terms of Proposition \ref{prop:B12intersection3dim} is trivial now. 
\end{proof}

	See Figure \ref{figure:B12B13B143dim} for the surface $\langle V(z_1,z_2,z_3), V(z_1,z_2,z_3)\rangle=0$ in the proof of Proposition \ref{prop:B133dim}  when $\theta=2.7> \frac{5 \pi}{6}$, which seems to be a 2-sphere (we do not need this fact in this paper). Which is an evidence for Question  \ref{question:3-ball}  in Section \ref{sec:intro}.
	 \begin{figure}
		\begin{center}
			\begin{tikzpicture}
			\node at (0,0) {\includegraphics[width=6cm,height=6cm]{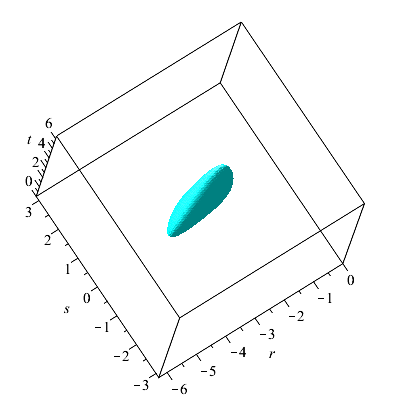}};
			\end{tikzpicture}
		\end{center}
		\caption{The cyan surface  is the locus of $V=V(z_1,z_2,z_3)$ in (\ref{item:B12andB13abdB143dim}) with the condition $\langle V, V\rangle=0$  when $\theta=2.7$, which seems to be a 2-sphere in $ \mathbb{S}^1 \times\mathbb{S}^1 \times\mathbb{S}^1$.}
		\label{figure:B12B13B143dim}
	\end{figure}

By the $\langle J\rangle =\mathbb{Z}_{4}$ symmetry, we have similar properties of  other triple intersections of bisectors.  

From Propositions \ref{prop:B123dim} and  \ref{prop:B133dim}, there is no quadruple intersection of bisectors $B_{ij}$ for $I_{ij} \in R$. 
We take the side $s_{ij}=B_{ij}\cap D_{R}$ when $j=i \pm 1$ or $j=i+2$  mod 4 and $i=1,2,3,4$. 
By taking sample points, it is easy to see $s_{ij}$ is non-empty, so each  $s_{ij}$ is a nonempty  5-dimensional object in ${\bf H}^3_{\mathbb{C}}$.  Since $$I_2I_1(s_{12})=s_{21},~~~J(s_{12})=s_{23},$$ there are only two isometric types of 5-facets, say $s_{12}$ and $s_{13}$. 
Since 
 $$J(s_{14}\cap s_{12})=s_{21}\cap s_{23},~~~~J(s_{12}\cap s_{13})=s_{23}\cap s_{24},~~~~I_2I_1(s_{12}\cap s_{13})=s_{21}\cap s_{23},$$
 there is only one   isometric types of 4-facets, say $s_{12}\cap s_{14}$. There is only one isometric type of 3-facets, say $s_{12}\cap s_{13}\cap s_{14}$. In particular, there is no quadruple intersection of bisectors.   So we do not need the precisely combinatorial structure of the facets. What we really need is the above  mentioned facets are all non-empty. Which is enough for the  Poincar\'e polyhedron theorem in our (lucky) case.
We have the following propositions.

\begin{prop}\label{prop:s123dim}	
	
 For all $\theta \in (\frac{5 \pi}{6}, \pi]$, the side  $s_{12}=B_{12}\cap D_{R}$  is a non-empty  5-dimensional object in $ {\bf H}^3_{\mathbb{C}} \cup \partial {\bf H}^3_{\mathbb{C}}$. Moreover,    \begin{itemize} 
 	\item the frontier of $s_{12} \cap  {\bf H}^3_{\mathbb{C}}$   consists of two non-empty 4-dimensional objects $s_{12}\cap s_{14}$ and $s_{12}\cap s_{13}$;
 	
 	\item  the intersection of $s_{12}\cap s_{14}$ and $s_{12}\cap s_{13}$ is the non-empty 3-dimensional object  $s_{12}\cap s_{13}\cap s_{14}$.
 	\end{itemize}
\end{prop}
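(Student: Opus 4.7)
The plan is to combine the intersection analyses of Propositions~\ref{prop:B123dim} and~\ref{prop:B133dim} with the triple-intersection result of Proposition~\ref{prop:B12intersection3dim} to control the boundary of $B_{12}\cap D_{R}$.

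First I would check that $s_{12}$ has nonempty interior in $B_{12}$, hence is $5$-dimensional. A natural candidate is the midpoint of the geodesic from $p_0$ to $p_{12}$: by construction it lies on $B_{12}$, and using the explicit coordinates of the $p_{ij}$ from Subsection~\ref{subsec:3dimpreliminary} one verifies directly that its distance to $p_0$ is strictly smaller than its distance to $g(p_0)$ for every $g\in R\setminus\{I_1I_2,I_2I_1\}$. Since Proposition~\ref{prop:B123dim} shows that $B_{12}$ and $B_{21}$ are tangent only at the ideal point $q_{12}$, the midpoint lies in the interior of $D_{R}$, and by continuity so does an open neighborhood of it in $B_{12}$.

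Next I would determine the frontier of $s_{12}$ inside $B_{12}$. Any frontier point must lie on some bisector $B_g$ with $g\in R\setminus\{I_1I_2\}$. Proposition~\ref{prop:B123dim} eliminates $B_{23},B_{32},B_{43},B_{41},B_{24}$, while the tangency with $B_{21}$ contributes only the single ideal point $q_{12}$. The remaining candidates are $B_{13}$, $B_{14}$, and $B_{34}$. The case $B_{34}$ is ruled out by the last item of Proposition~\ref{prop:B123dim}: even where $B_{12}\cap B_{34}$ is nonempty, it lies strictly on the side of $B_{13}$ opposite $p_0$, hence outside $D_{R}$. Therefore
\[
\mathrm{fr}\bigl(s_{12}\cap{\bf H}^3_{\mathbb C}\bigr)=(s_{12}\cap s_{13})\cup(s_{12}\cap s_{14}).
\]

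Then I would verify that each piece $s_{12}\cap s_{13}$ and $s_{12}\cap s_{14}$ is in fact $4$-dimensional. Proposition~\ref{prop:B12intersection3dim} provides that $B_{12}\cap B_{13}$ and $B_{12}\cap B_{14}$ are nonempty Giraud $4$-balls, and the emptiness statements above show that the only nontrivial further cut on $B_{12}\cap B_{13}$ comes from $B_{14}$ (and vice versa). Using the spinal parameterization of the triple intersection appearing in Proposition~\ref{prop:B12intersection3dim} and perturbing the sample point $(r,s,t)=(-\pi,0,\pi)$ off the $B_{14}$-locus produces an open $4$-parameter family of negative vectors on the correct side, confirming nonemptiness of the interior of $s_{12}\cap s_{13}$. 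The nonemptiness of the triple $s_{12}\cap s_{13}\cap s_{14}$ is then precisely the content of the triple-intersection statement in Proposition~\ref{prop:B12intersection3dim}, once one checks that the exhibited sample point lies in $D_R$; this reduces again to the same empty pairwise-intersection list.

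The main obstacle is the uniform verification in $\theta\in(\frac{5\pi}{6},\pi]$ that the chosen sample points satisfy every half-space inequality for the ten elements of $R$. This amounts to a finite collection of continuous inequalities in $\theta$, controlled at the endpoints by the analyses of Sections~\ref{sec:Dirich2dim} and~\ref{sec:Dirich3dimreal}, and closed on the interior by a Maple-assisted minimization in the same style as the bounds appearing in Proposition~\ref{prop:B123dim}.
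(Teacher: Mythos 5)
Your proposal is correct and follows essentially the same route as the paper, which states this proposition without a separate proof and derives it from the same ingredients: the pairwise emptiness results and the $B_{34}$ exclusion of Proposition~\ref{prop:B123dim}, Proposition~\ref{prop:B133dim}, the non-emptiness of the Giraud intersections and of the triple intersection in Proposition~\ref{prop:B12intersection3dim}, together with sample-point checks that the relevant facets meet $D_R$. Your write-up is in fact somewhat more careful than the paper's (e.g.\ the connectedness argument placing all of $B_{12}$ on the $p_0$-side of the empty-intersection bisectors, and the explicit check that the triple-intersection sample point lies in $D_R$), but it is the same argument.
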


\begin{prop}\label{prop:s133dim}	
	
 For all $\theta \in (\frac{5 \pi}{6}, \pi]$,	the side  $s_{13}=B_{13}\cap D_{R}$  is a non-empty  5-dimensional object in $ {\bf H}^3_{\mathbb{C}} \cup \partial {\bf H}^3_{\mathbb{C}}$. Moreover,   \begin{enumerate}  
 	\item \label{item:4dimobject}the frontier of $s_{13} \cap  {\bf H}^3_{\mathbb{C}}$  are two disjoint 4-dimensional objects;
 	
 	\item  each of the 4-dimensional objects in (\ref{item:4dimobject})   consists of two parts:
 	
 	\begin{itemize}  
 		\item  the union of $s_{13}\cap s_{12}$ and $s_{13}\cap s_{14}$ is a component of the frontier of $s_{13} \cap  {\bf H}^3_{\mathbb{C}}$. The intersection of  $s_{13}\cap s_{12}$ and $s_{13}\cap s_{14}$  is the non-empty 3-dimensional object  $s_{12}\cap s_{13}\cap s_{14}$; 
 		\item The union of $s_{13}\cap s_{32}$ and $s_{13}\cap s_{34}$ is the other  component of  the frontier of $s_{13} \cap  {\bf H}^3_{\mathbb{C}}$. The intersection of $s_{13}\cap s_{32}$ and $s_{13}\cap s_{34}$ is the non-empty 3-dimensional object  $s_{13}\cap s_{32}\cap s_{34}$.
 		\end{itemize} 
 	\end{enumerate} 
\end{prop}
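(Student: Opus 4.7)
My plan is to leverage the involutive symmetry $I_1I_3$, which fixes $B_{13}$ setwise and interchanges the two halves of ${\bf H}^3_{\mathbb C}\setminus B_{13}$, together with the intersection data already established in Propositions \ref{prop:B123dim}, \ref{prop:B133dim}, and \ref{prop:B12intersection3dim}. The starting observation is that Proposition \ref{prop:B133dim} rules out intersections of $B_{13}$ with each of $B_{21}$, $B_{23}$, $B_{43}$, $B_{41}$, $B_{24}$, so among the ten bisectors indexed by $R$ only $B_{12}$, $B_{14}$, $B_{32}$, $B_{34}$ can contribute to the frontier of $s_{13}\cap {\bf H}^3_{\mathbb C}$. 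The $\mathbb{Z}_4$-symmetry via $J$ (which conjugates $A_i$ to $A_{i+1}$ and hence preserves $D_R$) together with the $I_1I_3$-involution pairs $\{B_{12},B_{14}\}$ with $\{B_{32},B_{34}\}$, yielding the expected two-component structure.

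Next I would establish non-emptiness in stages. For $s_{13}$ itself, a natural interior sample point is the midpoint of the real spine from $p_0$ to $p_{13}$; a direct numerical check in the spirit of the preceding propositions shows it satisfies all remaining Dirichlet inequalities associated to $R$, uniformly over $\theta\in(\frac{5\pi}{6},\pi]$. For each of the four 4-dimensional faces $s_{13}\cap s_{12}$, $s_{13}\cap s_{14}$, $s_{13}\cap s_{32}$, $s_{13}\cap s_{34}$, I would parameterize the underlying triple bisector intersection of the form $B(p_0,p_{13})\cap B(p_0,p_{ij})$ via the triple Hermitian cross product of Subsection \ref{subsection:TripleHermitiancrossproduct}, and exhibit a negative sample vector $V(z_1,z_2,z_3)$. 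The non-emptiness of the two 3-dimensional facets $s_{12}\cap s_{13}\cap s_{14}$ and $s_{13}\cap s_{32}\cap s_{34}$ is already covered by Proposition \ref{prop:B12intersection3dim} together with its $J^2$-image, since $J^2$ preserves $D_R$ and sends the first facet to the second.

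To show that the two frontier pieces $\mathcal{F}_1=(s_{13}\cap s_{12})\cup(s_{13}\cap s_{14})$ and $\mathcal{F}_2=(s_{13}\cap s_{32})\cup(s_{13}\cap s_{34})$ are disjoint inside ${\bf H}^3_{\mathbb C}$, it suffices to rule out the four pairwise cross-intersections $s_{12}\cap s_{32}$, $s_{12}\cap s_{34}$, $s_{14}\cap s_{32}$, $s_{14}\cap s_{34}$ in $D_R$. The first is immediate from item (3) of Proposition \ref{prop:B123dim}; the second lies outside $D_R$ by item (7) of the same proposition; the fourth follows from $J^{-1}(B_{14}\cap B_{34})=B_{43}\cap B_{23}$, which is $J(B_{12}\cap B_{32})$ and hence empty by item (3) again; and the third reduces to a Maple argument identical in shape to the proof of item (7), with $(p_{12},p_{34})$ replaced by $(p_{14},p_{32})$. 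The $I_1I_3$-involution then interchanges $\mathcal{F}_1$ with $\mathcal{F}_2$ by the three-dimensional analogue of Proposition \ref{prop:I1I3}.

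The main obstacle I foresee is verifying that each 4-dimensional face such as $s_{13}\cap s_{12}$ is connected and glues onto $s_{13}\cap s_{14}$ along the single 3-dimensional facet $s_{12}\cap s_{13}\cap s_{14}$, rather than splitting into several components; this topological question is precisely the content of Question \ref{question:3-ball}, which the author flags as open. Fortunately, as emphasized in Section \ref{sec:complex3dim}, only the non-emptiness of these facets is needed for the Poincar\'e polyhedron theorem to apply in our lucky setting, so the proof can bypass the detailed combinatorics and simply record the incidences established above together with the $\langle J, I_1I_3\rangle$-symmetry.
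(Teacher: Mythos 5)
Your proposal is correct and follows essentially the same route as the paper, which gives no separate proof of this proposition but derives it from Propositions \ref{prop:B123dim}, \ref{prop:B133dim} and \ref{prop:B12intersection3dim} together with the $\langle J\rangle$-symmetry, the side-pairing involution $I_1I_3$, and the explicit caveat that only non-emptiness of the facets is needed for the Poincar\'e polyhedron theorem. One point where you are in fact more careful than the source: the pair $B_{14}\cap B_{32}$ (equivalently $B_{21}\cap B_{43}$) forms its own $J$-orbit and is not reducible to any intersection computed in Proposition \ref{prop:B123dim} or \ref{prop:B133dim}, so the extra verification you propose for $s_{14}\cap s_{32}$ is genuinely needed to establish the disjointness of the two frontier components, a step the paper leaves implicit.
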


\subsection{Proof of Theorem  \ref{thm:complex3dim}}\label{subsec:poincare3dim}
%{\bf Proof of Theorem  \ref{thm:complex3dim}.}
After above propositions, the proof of   Theorem  \ref{thm:complex3dim} for general $\theta \in (\frac{5 \pi}{6}, \pi]$ is similar to the proof when $\theta =\frac{5 \pi}{6}$.

We have the  side pairing maps of $D_{R}$ as in Subsection \ref{subsec:poincare2dim}.

%Then we need to  prove the tessellation around the sides and ridges of $D_{R}$.

For example, since $I_1I_3$  is a self-homeomorphism of $s_{13}$, $I_1I_3$
exchanges the two components  of ${\bf H}^{3}_{\mathbb C}-B_{13}$. We can see that
$D_{R}$ and $I_1I_3(D_{R})$
have disjoint interiors, and they together  cover a neighborhood  of
each point in the interior of the side $s_{13}$.  The cases of the other 5-sides are similar.

We now consider the  tessellation about ridges. Recall that $A_1=I_1I_2$, $A_2=I_2I_3$, $A_3=I_3I_4$ and $A_4=I_4I_1$. 
  For the ridge  $s_{14}\cap s_{12}$, we have proven it is a nonempty 4-dimensional object. 
   The ridge circle of it is $$s_{14}\cap s_{12} \xrightarrow{A^{-1}_1}s_{21}\cap s_{24}\xrightarrow{(A_2A_3)^{-1}}s_{24}\cap s_{41}\xrightarrow{A^{-1}_4}s_{14}\cap s_{12}.$$
Which gives the relation $A^{-1}_4 \cdot A^{-1}_3A^{-1}_2 \cdot A^{-1}_1=id$. 
What we really need (and we proved) is that $s_{14}\cap s_{12}$,  $s_{21}\cap s_{24}$ and $s_{24}\cap s_{41}$ are all non empty. Moreover $$A^{-1}_1(s_{12}\cap s_{13}\cap s_{14})=s_{21}\cap s_{23}\cap s_{24}.$$ Then the  ridge circle relation is proved by checking only the group action on the points $p_0$, $p_{14}$, $p_{12}$ and  $p_{24}$. 
A standard argument implies $D_{R} \cup A_1(D_{R}) \cup A^{-1}_4(D_{R})$ covers a small neighborhood of  $s_{14}\cap s_{12}$.

 For the ridge  $s_{13}\cap s_{14}$,  the ridge circle is $$s_{13}\cap s_{14} \xrightarrow{A_4}s_{41}\cap s_{43}\xrightarrow{A_3}s_{34}\cap s_{31}\xrightarrow{A_1A_2}s_{13}\cap s_{14}.$$
Which gives the relation $A_1A_2 \cdot A_3\cdot A_4=id$.  We have also proved that  $s_{13}\cap s_{14}$,  $s_{41}\cap s_{43}$ and $s_{34}\cap s_{31}$ are all non empty. A standard argument implies $D_{R} \cup A^{-1}_4(D_{R}) \cup (A_1A_2)^{-1}(D_{R})$ covers a small neighborhood of  $s_{13}\cap s_{14}$.

The other ridge circles can be proved similar.

Similar to  the proof when $\theta=\frac{5 \pi}{6}$,  the identification space of $D_{R}$ by these side pairing maps is complete.

%The reason identified with the proof when $\theta=\frac{5 \pi}{6}$ shows  the identification space of $D_{R}$ by these side pairing maps is complete.

By  Poincar\'e polyhedron theorem, the partial Dirichlet domain $D_{R}$ is in fact the Dirichlet domain
of $\rho_{\theta}(K) <\mathbf{PU}(3,1)$.
 Now we have the presentation 
 $$\rho_{\theta}(K)=\left\langle  A_1, A_2, A_3, A_4 \Bigg| \begin{matrix}     (A_1A_2)^2=   (A_2A_3)^2 =  A_1A_2  A_3A_4=id
 \end{matrix}\right\rangle $$
when $\theta \in (\frac{5 \pi}{6}, \pi]$.   So we have a discrete and  faithful representation of $K$. This ends the proof of  Theorem  \ref{thm:complex3dim}.

%From Lemma \ref{lemma:coplane}, we take a matrix $M$ in $\mathbf{PU}(3,1)$, such that $M(q_{\infty})=q_{\infty}$, and the third entries of $MC^{-1}(q_{\infty})$,  $MCBC^{-1}(q_{\infty})$ and $MCBC(q_{\infty})$ are zeros.  Then we can  parametrize the intersections of three isometric spheres of $I(C)$, $I(CBC^{-1})$ and $I(C^{-1}BC^{-1})$ as in the case of ${\bf H}^{2}_{\mathbb{C}}$-geometry.
%More precisely, we consider the isometric spheres of $MCM^{-1}$, $MCBC^{-1}M^{-1}$ and $MCBC^{-1}M^{-1}$. We note that these elements do not preserve a ${\bf H}^{2}_{\mathbb{C}} \hookrightarrow {\bf H}^{3}_{\mathbb{C}}$ invariant, but it does not matter.  We also remark we should be careful when consider the parametrization  of the intersections of two isometric spheres  using Hermitian cross product, here we mean we use $q_{\infty}$,  $MC^{-1}(q_{\infty})$, $MCBC^{-1}(q_{\infty})$ and  $MCBC(q_{\infty})$ in $\mathbb{C}^{3,1}$, we can not normalize any of them by product a scalar.

\bibliographystyle{amsplain}

\begin{thebibliography}{10}

		\bibitem{ASWY:2019}	
	Hirotaka	 Akiyoshi,  Makoto  Sakuma, Masaaki  Wada and Yasushi Yamashita.
	\newblock  Punctured torus groups and 2-bridge knot groups. I.
	\newblock Lecture Notes in Mathematics, 1909. Springer, Berlin, 2007.
	
		\bibitem{CunhaDGT:2012}
	Heleno Cunha, Francisco Dutenhefner, Nikolay  Gusevskii and  Rafael Santos Thebaldi.
	\newblock The moduli space of complex geodesics in the complex hyperbolic plane.
	\newblock J. Geom. Anal. 22 (2012), no. 2, 295--319.

	
\bibitem{DeligneMostow:1986}
P. Deligne and G. D. Mostow.
\newblock Monodromy of hypergeometric functions and nonlattice integral monodromy.
\newblock  Inst. Hautes \'Etudes Sci. Publ. Math. No. 63 (1986), 5--89.
	 
		\bibitem{Deraux:2016gt}
	Martin 	Deraux.	
	\newblock	A 1-parameter family of spherical CR uniformizations of the 	 figure eight knot complement.
	\newblock
	Geom. Topol. 20 (2016), no. 6, 3571--3621.
	
	\bibitem{Deraux:2020}
	Martin 	Deraux.
	\newblock  A new nonarithmetic lattice in
	$\mathbf{PU}(3,1)$.	
	\newblock   Algebr. Geom. Topol. 20 (2020), no. 2, 925--963.
	
			\bibitem{DerauxF:2015}
		M. Deraux and E. Falbel.
		\newblock Complex hyperbolic geometry of the figure eight knot.
		\newblock Geom. Topol. 19(2015), 237--293.
		
	\bibitem{dpp:2016}Martin Deraux, John R. Parker and  Julien Paupert.
	\newblock  New non-arithmetic complex hyperbolic lattices.
	\newblock Invent. Math.  203 (2016), 681--771.
	
	\bibitem{dpp:2021}
	Martin Deraux, John R. Parker and  Julien Paupert.
	\newblock New non-arithmetic complex hyperbolic lattices II.
	\newblock Michigan Mathematical Journal. 70(2021), 133--205.
	
	
	\bibitem{Falbelparker:2003}
	Elisha Falbel and   John R. Parker.
	\newblock  The moduli space of the modular group in complex hyperbolic geometry. 
	\newblock Invent. Math. 152 (2003), no. 1, 57--88.
	
	\bibitem{Fisher:2021}
	David Fisher.
	\newblock	Rigidity, lattices and invariant measures beyond homogeneous dynamics.
	\newblock  arXiv 2111.14922. To appear in Proceedings of the ICM 2022.
	
	\bibitem{Go}
	William M. Goldman.
	\newblock  Complex hyperbolic geometry.
	\newblock Clarendon, New York, 1999.
	
		
	\bibitem{Goldmanparker:1992}
	William M. Goldman and John R. 
	Parker.
	\newblock 	Dirichlet polyhedra for dihedral groups acting on complex hyperbolic space.
	\newblock 	J. Geom. Anal. 2 (1992), no. 6, 517--554.
	
		\bibitem{GoPa} William M. Goldman  and  John R. Parker.
	\newblock Complex hyperbolic ideal triangle groups.
	\newblock J. Reine Angew. Math. 425 (1992), 71--86.
	
		
	\bibitem{Gromov-PS:1992}
M. Gromov and  I. Piatetski-Shapiro.
	\newblock 
 Nonarithmetic groups in Lobachevsky spaces. 
 	\newblock   Inst. Hautes \'Etudes Sci. Publ. Math. No. 66 (1988), 93--103.
	
	\bibitem{MaX:2021} Jiming Ma and Baohua Xie.
	\newblock Spherical CR uniformization of the  ``magic" 3-manifold.
	\newblock 	arXiv:2106.06668 [math.GT].
	
	
	\bibitem{Margulis}
	G. A.  Margulis.
	\newblock
	Discrete groups of motions of manifolds of nonpositive curvature. (Russian)
	\newblock  Proceedings of the International Congress of Mathematicians (Vancouver, B.C., 1974), Vol. 2, pp. 21--34. Canad. Math. Congress, Montreal, Que., 1975.
	
	\bibitem{Minsky:2006}
	Yair N.  Minsky.
	\newblock Curve complexes, surfaces and 3-manifolds.
	\newblock  International Congress of Mathematicians. Vol. II, 1001--1033, Eur. Math. Soc., Z\"urich, 2006.
	

	
	\bibitem{Mostow:1980}
	G. D. Mostow.
	 \newblock On a remarkable class of polyhedra in complex hyperbolic space.
	  \newblock
	 Pacific J. Math. 86 (1980), no. 1, 171--276.
	
	
	
	
	\bibitem{ParkerWX:2016}
	John R. Parker, Jieyan Wang and Baohua Xie.
	\newblock Complex hyperbolic $(3, 3, n)$ triangle groups.
	\newblock Pacific J. Mathematics, 280(2016), 433--453.
	
	
	\bibitem{ParkerWill:2017}
	John R. Parker and Pierre Will.
	\newblock A complex hyperbolic Riley slice.
	\newblock Geom. Topol. 21 (2017), 3391--3451.
	
	
		\bibitem{Phillips:1992}	Mark B. Phillips.
	\newblock 
	Dirichlet polyhedra for cyclic groups in complex hyperbolic space. 
	\newblock Proc. Amer. Math. Soc. 115 (1992), no. 1, 221--228.
	
	
	\bibitem{Schwartz:2001ann}
	Richard Evan Schwartz.
	\newblock  Ideal triangle groups, dented tori, and numerical analysis.
	\newblock  Ann. of Math. (2) 153 (2001), no. 3, 533--598.
	
	\bibitem{schwartz:2006}
	Richard Evan  Schwartz.
	\newblock A better proof of the Goldman-Parker conjecture.
	\newblock  Geom. Topol.  9 (2005), 1539--1601.
	
	
	\bibitem{Schwartz-icm} Richard Evan Schwartz.
	\newblock  Complex hyperbolic triangle groups.
	\newblock Proceedings of the International Congress of Mathematicians.
	(2002) Volume  1: Invited Lectures, 339--350.
	
	
	

	\bibitem{Schwartz:2001}
	 Richard Evan  Schwartz.
	  \newblock Degenerating the complex hyperbolic ideal triangle groups. Acta Math. 186 (2001), no. 1, 105--154.
	
	
	
	
	\bibitem{Schwartz:2003}
	Richard Evan  Schwartz.
	\newblock Real hyperbolic on the outside, complex hyperbolic on the inside.
\newblock Invent. Math. 151 (2003), no. 2, 221--295.
	
	
	
	
	
	
	
	

	
	

	
	
	
		\bibitem{VinbergS:1993}
	E. B. Vinberg and  O. V. Shvartsman.
	\newblock  Discrete groups of motions of spaces of constant curvature.
	\newblock Geometry, II, 139--248, Encyclopaedia Math. Sci., 29, Springer, Berlin, 1993.
%	HHHHHHHHHHHHHHHHHHHHHHHHHHHHHHHHHHHHHHHHHHHHHHHHHH
	

	
	
%	\bibitem{Acosta:2019}
%	Miguel Acosta.
%	\newblock Spherical CR uniformization of Dehn surgeries of the Whitehead link complement.
%	\newblock  Geom. Topol. 23 (2019), 2593--2664.
	
	

%	\bibitem{AnaninGG:2011}
%	Sasha  Ananʹin,  Carlos H. Grossi and  Nikolay  Gusevskii.
%	\newblock Complex hyperbolic structures on disc bundles over surfaces.
%	\newblock Int. Math. Res. Not. 2011, no. 19, 4295--4375.
	
	
	
	
	
	
	










%\bibitem{GongopadhyayPP}
%Krishnendu Gongopadhyay, John R. Parker and Shiv Parsad.
%\newblock On the classifications of unitary matrices.
%\newblock Osaka J. Math. 52 (2015), no. 4, 959--991.

%	\bibitem{Hempel}
%J. Hempel,
%\newblock 3-manifolds,
%\newblock Reprint of the 1976 original. AMS Chelsea Publishing, Providence, RI, 2004.

% \bibitem{Ma:2020a}
%Jiming Ma.
%\newblock  Schwartz's complex hyperbolic surface.
%\newblock   Submitted, 2020.

%\bibitem{Ma:2020d}
%Jiming Ma.
%\newblock  Drilling loci of critical complex hyperbolic triangle groups.
%\newblock  In preparation, 2022.






 



% \bibitem{ShepharTodd:1954}
%G. C. Shephard and J. A. Todd.
%\newblock Finite unitary reflection groups.
%\newblock
%Canad. J. Math. 6 (1954), 274--304.







 

%\bibitem{CooperLT:2007}
%D. Cooper, D. D. Long and M. B. Thistlewaite.
%\newblock Flexing closed hyperbolic manifolds.
%\newblock Geom. Topol. 11 (2007) 2413--2440.









 

% \bibitem{GoldmanKL:2003}
% William M. Goldman,  Michael Kapovich and Bernhard  Leeb.
% \newblock Complex hyperbolic manifolds homotopy equivalent to a Riemann surface.
% \newblock Comm. Anal. Geom. 9 (2001), no. 1, 61--95.




%  \bibitem{Grossi:2007}
% Carlos H. Grossi.
% \newblock On the type of triangle groups.
% \newblock Geom. Dedicata 130 (2007), 137--148.




 % \bibitem{mx}
% Jiming Ma and Baohua Xie.
% \newblock Three-manifolds at infinity of complex hyperbolic orbifolds.
% \newblock  Submitted, 2020.

 

 % \bibitem{MPP:2019}
% Andrew  Monaghan,  John R. Parker and Anna  Pratoussevitch.
% \newblock Discreteness of ultra-parallel complex hyperbolic triangle groups of type $[m_1,m_2,0]$.
% \newblock J. Lond. Math. Soc. (2) 100 (2019), no. 2, 545--567.

 

 %\bibitem{Parker:2006}
 %John R. Parker.
 %\newblock Complex  Cone metrics on the sphere and Livn\'e's lattices.
 %\newblock  Acta Math. 196 (2006), no. 1, 1--64.


 
 
%	\bibitem{Rolfsen}
%D. Rolfsen.
%\newblock Knots and links,
%\newblock Mathematics Lectures Series 7, Publish or Perish, Berkeley, CA, 1976.
 %  \bibitem{PaupertWill:2017}
 %Julien Pauper and  Pierre  Will.
 %\newblock Involution and commutator length for complex hyperbolic isometries.
 %\newblock  Michigan Math. J. 66 (2017), no. 4, 699--744.


 %\bibitem{PPratoussevitch:2020}
 %Sam Povall and Anna Pratoussevitch.
 %\newblock Complex hyperbolic triangle groups of type $[m,m,0;3,3,2]$.
 %\newblock Conform. Geom. Dyn. 24 (2020), 51--67.

 %\bibitem{Pratoussevitch:2005}
 %Anna Pratoussevitch.
 %\newblock  Traces in complex hyperbolic triangle groups.
 %\newblock  Geom. Dedicata 111 (2005), 159--185.


 





% \bibitem{Schwartz:2007}
%Richard Evan  Schwartz.
%\newblock Spherical CR geometry and Dehn surgery.
%\newblock Annals of Mathematics Studies, 165. Princeton University Press, %Princeton, NJ, 2007.





% \bibitem{Thompson:2010}
%James Matthew Thompson.
%\newblock Complex hyperbolic triangle groups.
%\newblock  Durham theses, Durham University, 2010.





\end{thebibliography}

\end{document}